\newtheorem{theorem}{Theorem}[section]
\newtheorem{lemma}{Lemma}[section]
\newtheorem{corollary}{Corollary}[section]
\newtheorem{proposition}{Proposition}[section]
\newtheorem{definition}{Definition}[section]
\newtheorem{remark}{Remark}[section]
\newtheorem{example}{Example}[section]
\numberwithin{equation}{section}
\title{Holomorphic extension in holomorphic fiber bundles with (1,0)-compactifiable fiber}
\author{S.~V.~Feklistov}
\date{}
\begin{document}
\maketitle
\markright{The Hartogs phenomenon in holomorphic fiber bundles}
\begin{abstract}

We use the Leray spectral sequence for the sheaf cohomology groups with compact supports to obtain a vanishing result. The stalks of sheaves $R^{\bullet}\phi_{!}\mathcal{O}$ for the structure sheaf $\mathcal{O}$ on the total space of a holomorphic fiber bundle $\phi$ has canonical topology structures. Using the standard \v Cech argument we prove a density lemma for QDFS-topology on this stalks. In particular, we obtain a vanishing result for holomorphic fiber bundles with Stein fibers. Using K\"unnet formulas, properties of an inductive topology (with respect to the pair of spaces) on the stalks of the sheaf $R^{1}\phi_{!}\mathcal{O}$ and a cohomological criterion for the Hartogs phenomenon we obtain the main result on the Hartogs phenomenon for the total space of holomorphic fiber bundles with (1,0)-compactifiable fibers.
\end{abstract}

\section{Introduction}\label{sec1}

The classical Hartogs extension theorem states that for every domain $W\subset\mathbb{C}^{n}(n>1)$ and a compact set $K\subset W$ such that $W\setminus K$ is connected, the restriction homomorphism $$\mathcal{O}(W)\to \mathcal{O}(W\setminus K)$$ is an isomorphism.

A natural question arises if this is true for complex analytic spaces. 
\begin{definition}
We say that a noncompact connected complex analytic space $X$ admits the Hartogs phenomenon if for any domain $W\subset X$ and a compact set $K\subset W$ such that $W\setminus K$ is connected, the restriction homomorphism $$\mathcal{O}(W)\to \mathcal{O}(W\setminus K)$$ is an isomorphism. 
\end{definition}

In this or a similar formulation this phenomenon has been extensively studied in many situations, including Stein manifolds and spaces, $(n-1)$-complete normal complex spaces and so on \cite{Andersson,AndrHill,BanStan,ColtRupp, Fek1, Fek2, Harvey,Merker, Marc,Marc1, Vassiliadou,Viorel}.

Our goal is to study the Hartogs phenomenon in holomorphic fiber bundle with noncompact (1,0)-compactifiable fiber. Note that, a noncompact complex manifold $F$ is called $(1,0)$-com\-pa\-cti\-fiable (for more details, see Section \ref{sectioncompactif}) if it admits a compactification $F'$ with the following properties: 
\begin{enumerate}
\item $F'$ is a compact complex manifold;
\item $F'\setminus F$ is a proper connected analytic set;
\item $H^{1}(F',\mathcal{O}_{F'})=0$.
\end{enumerate}

Let $X,Y$ be a locally compact space and let $\phi\colon X\to Y$ be a continuous map. We use the Leray spectral sequence for a group cohomology with compact supports to obtain that $H^{i}_{c}(X,\mathcal{F})=0$ for all $i<q$ provided $R^{i}\phi_{!}\mathcal{F}=0$ for all $i<q$, where $\phi_{!}$ is the direct image with compact supports between sheaves categories $\mathcal{SH}(X)$ and $\mathcal{SH}(Y)$, $R^{i}\phi_{!}$ is the $i$-th derived functor (Corollary \ref{corcompactspectral}). 

Now let $\phi\colon X\to Y$ be a holomorphic fiber bundle with noncompact fiber $F$, and $y\in Y$ be a point. The stalk $(R^{i}\phi_{!}\mathcal{O}_{X})_{y}$ has the canonical topological vector space structure which is QDFS-space (see Section 3). Using the standard \v Cech argument we prove that the canonical homomorphism $$\mathcal{O}_{y}\otimes H^{i}_{c}(F_{y},\mathcal{O}_{F_y})\to (R^{i}\phi_{!}\mathcal{O}_{X})_{y}$$ has a dense image with respect to the QDFS-topology (Lemma \ref{Lemmaondense}). In particular, if $(R^{i}\phi_{!}\mathcal{O}_{X})_{y}$ is a separated space, then the condition $H^{i}_{c}(F,\mathcal{O}_{F})=0$ for all $i<q$ implies that $H^{i}_{c}(X,\mathcal{O}_{X})=0$ for all $i<q$ (Corollary \ref{vanishtheorem}). 

If the fiber $F$ is a Stein manifold, then $H^{i}_{c}(X,\mathcal{O}_{X})=0$ for all $i<\dim F$ (Corollary \ref{Stein}). For $F=\mathbb{C}^{n}$ and $n>1$ we obtain the Dwilewicz's result about vanishing $H_{c}^{1}(X,\mathcal{O}_{X})$ \cite[Corollary 1.4]{Dwilewicz}).

Now, assume that the fiber $F$ is $(1,0)$-compactifiable and $\dim F>1$. In this case, using the K\"unnet formulas for algebraic and topological tensor products, the long exact sequence for the pair $(F,F')$ and Lemma \ref{Lemmaondense} we obtain that the condition $H^{1}_{c}(F,\mathcal{O}_{F})=0$ implies that $R^{1}\phi_{!}\mathcal{O}_X=0$. In particular $H^{1}_{c}(X,\mathcal{O}_{X})=0$ provided $H^{1}_{c}(F,\mathcal{O}_{F})=0$ (Corollary \ref{maincorcompactif}). 
 
Using the cohomological criterion for the Hartogs phenomenon (see Section 6) we obtain the following main result (Theorem \ref{mainresult}).

\begin{theorem}
Let $\phi\colon X\to Y$ be a holomorphic fibre bundle with $(1,0)$-com\-pac\-ti\-fiable fiber $F$, $\dim F>1$. If $F$ admits the Hartogs phenomenon, then $X$ also admits the Hartogs phenomenon. 
\end{theorem}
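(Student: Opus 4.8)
The plan is to reduce the Hartogs phenomenon on $X$ to a cohomological vanishing statement and then to feed in the structural results already established in the excerpt. The key input is the cohomological criterion for the Hartogs phenomenon alluded to in Section 6: for a noncompact connected complex manifold, admitting the Hartogs phenomenon is equivalent to (or at least implied by) the vanishing of $H^{1}_{c}(X,\mathcal{O}_{X})$ together with the connectedness hypothesis that guarantees the extended function is unique. Thus the heart of the matter is to show $H^{1}_{c}(X,\mathcal{O}_{X})=0$.

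First I would invoke Corollary \ref{maincorcompactif}: since the fiber $F$ is $(1,0)$-compactifiable with $\dim F>1$, and since $F$ admits the Hartogs phenomenon, the fiberwise vanishing $H^{1}_{c}(F,\mathcal{O}_{F})=0$ holds. Here I would need to justify carefully that $F$ admitting the Hartogs phenomenon forces $H^{1}_{c}(F,\mathcal{O}_{F})=0$; this is precisely the cohomological criterion applied to the fiber itself, so the hypothesis ``$F$ admits the Hartogs phenomenon'' is exactly what converts into the analytic vanishing on the fiber. With $H^{1}_{c}(F,\mathcal{O}_{F})=0$ in hand, Corollary \ref{maincorcompactif} gives $R^{1}\phi_{!}\mathcal{O}_{X}=0$ as a sheaf on $Y$, and in particular $H^{1}_{c}(X,\mathcal{O}_{X})=0$.

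Next I would apply Corollary \ref{vanishtheorem} or the Leray-type argument (Corollary \ref{corcompactspectral}) to propagate the vanishing of $R^{1}\phi_{!}\mathcal{O}_{X}$ to the global statement $H^{1}_{c}(X,\mathcal{O}_{X})=0$. Since the excerpt already packages the spectral-sequence bookkeeping into these corollaries, this step is essentially a citation rather than a fresh computation. Finally, I would return to the cohomological criterion of Section 6, now applied to the total space $X$: the vanishing $H^{1}_{c}(X,\mathcal{O}_{X})=0$, combined with the fact that $X$ is a connected noncompact complex manifold (which follows from $Y$ connected and $F$ connected noncompact, the latter being part of the $(1,0)$-compactifiability data), yields that $X$ admits the Hartogs phenomenon.

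The main obstacle I anticipate is the interface between the two uses of the cohomological criterion. The criterion must be strong enough to run in both directions at the relevant stages: from ``$F$ admits the Hartogs phenomenon'' to ``$H^{1}_{c}(F,\mathcal{O}_{F})=0$'' on the fiber, and from ``$H^{1}_{c}(X,\mathcal{O}_{X})=0$'' back to ``$X$ admits the Hartogs phenomenon'' on the total space. One subtlety is that the Hartogs phenomenon as defined requires the extension statement for \emph{every} domain $W$ and compact $K$ with $W\setminus K$ connected, whereas the cohomological vanishing $H^{1}_{c}=0$ is a single global invariant; I would need the criterion to genuinely encode this local-to-global equivalence (presumably via the connectedness assumptions and the separatedness of the relevant cohomology spaces, which is where the QDFS-topology and Lemma \ref{Lemmaondense} are doing hidden work). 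Verifying that the separatedness hypothesis needed in Corollary \ref{vanishtheorem} is met for the total space $X$ — so that density in the QDFS-topology upgrades to genuine vanishing — is the technical point most likely to require care.
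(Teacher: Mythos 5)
Your proposal is correct and follows essentially the same route as the paper's own proof: Corollary~\ref{corth1} (the cohomological criterion combined with the $(1,0)$-compactification $F'$, where $H^{1}(F',\mathcal{O}_{F'})=0$) converts the Hartogs hypothesis on $F$ into $H^{1}_{c}(F,\mathcal{O}_{F})=0$; Corollary~\ref{maincorcompactif} then yields $H^{1}_{c}(X,\mathcal{O}_{X})=0$ directly, so your extra appeal to Corollary~\ref{vanishtheorem} is redundant (the separatedness concern you raise is exactly what Lemma~\ref{thmseparate} packages away via the inductive topology with respect to the pair); and Corollary~\ref{corth3} closes the argument. The only detail worth making explicit, which the paper also leaves implicit, is that the criterion requires connectedness at boundary (not merely connectedness) of $F$ and of $X$, which comes from the one-end condition in $(1,0)$-compactifiability.
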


For example, let $G$ be a semiabelian Lie group (i.e. G is an extension of an abelian manifold $A$ by an algebraic torus $T\cong (\mathbb{C}^{*})^{n}$). We have a principle $T$-bundle $G\to A$. Let $F$ be a toric $T$-manifold which has only one topological end (about toric varieties see, for instance, \cite{Oda}), $\dim F>1$, and $F$ admits the Hartogs phemomenon. Then the total space of an associated fiber bundle $G\times^{T}F\to A$ admits the Hartogs phenomenon (see Example \ref{Examtoric}).

\section{The Leray spectral sequence for sheaf cohomology groups with compact supports}

In this section we recall the Leray spectral sequence (see \cite[4.1.3]{Voisin}).

\subsection{The spectral sequence of composed functors}
Now we recall the spectral sequence of composed functors (see \cite[Section 4.1.2]{Voisin}).

Let $\mathcal{A},\mathcal{B},\mathcal{C}$ be abelian categories, where $\mathcal{A}$ and $\mathcal{B}$ have sufficiently many injective objects. Let $S\colon \mathcal{A}\to \mathcal{B}, S'\colon\mathcal{B}\to \mathcal{C}$ be left exact functors.  Assume that $S$ transform the injective objects of $\mathcal{A}$ into $S'$-acyclic objects of $\mathcal{B}$.

Then we have the following result (see \cite[Theorem 4.9]{Voisin}) which is based on the Cartan-Eilenberg resolution. 

\begin{theorem}\label{composedfunct}
There is a canonical filtration $F$ on the objects $R^{i}(S'\circ S)(M)$ and a spectral sequence $$E_{r}^{p,q}\Longrightarrow R^{p+q}(S'\circ S)(M)$$ with $E_{2}^{p,q}=R^{p}F'(R^{q}S(M))$, $E_{\infty}^{p,q}=Gr^{p}_{F}R^{p+q}(S'\circ S)(M)$.
\end{theorem}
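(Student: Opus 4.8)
The plan is to obtain both the abutment and the $E_2$-page as the two spectral sequences of a single double complex, which is the Cartan--Eilenberg method. First I would fix an injective resolution $0\to M\to I^{\bullet}$ in $\mathcal{A}$ and apply $S$ to get a cochain complex $S(I^{\bullet})$ in $\mathcal{B}$. Since each $I^{n}$ is injective and $S$ sends injectives to $S'$-acyclic objects, every term $S(I^{n})$ is $S'$-acyclic; this is the hypothesis that will force one of the two spectral sequences to degenerate. Next I would choose a Cartan--Eilenberg resolution $J^{p,q}$ of the complex $S(I^{\bullet})$ by injectives of $\mathcal{B}$, where the index $q$ records the differential of $S(I^{\bullet})$ and the index $p$ is the direction of injective resolution, and then apply $S'$ termwise to form the first-quadrant double complex $K^{p,q}=S'(J^{p,q})$ in $\mathcal{C}$. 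The canonical filtration $F$ of the statement is then the filtration of the total complex $\mathrm{Tot}(K)^{\bullet}$ by columns (by the index $p$), and the asserted spectral sequence is one of the two spectral sequences of $K^{p,q}$; the argument consists in computing both of them.

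To identify the abutment I would run the spectral sequence that takes cohomology in the $p$-direction first. For fixed $q$ the column $S(I^{q})\to J^{\bullet,q}$ is an injective resolution, so this cohomology is $R^{p}S'(S(I^{q}))$, which vanishes for $p>0$ by the $S'$-acyclicity of $S(I^{q})$ and equals $S'S(I^{q})$ for $p=0$. The page therefore collapses onto the line $p=0$, and taking cohomology in the $q$-direction leaves $H^{q}(S'S(I^{\bullet}))$. Because injective objects are acyclic for every right-derived functor, $I^{\bullet}$ is a complex of $(S'\circ S)$-acyclics, whence $H^{q}(S'S(I^{\bullet}))=R^{q}(S'\circ S)(M)$. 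Thus $H^{n}(\mathrm{Tot}(K))\cong R^{n}(S'\circ S)(M)$, which pins down both the abutment and the filtration $F$.

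The $E_2$-page comes from the other spectral sequence, which takes cohomology in the $q$-direction first. Here the decisive structural fact is that in a Cartan--Eilenberg resolution the short exact sequences of $q$-cocycles, $q$-coboundaries and $q$-cohomology consist of injectives and therefore split, so that the additive functor $S'$ commutes with $q$-cohomology: $H^{q}\big(S'(J^{p,\bullet})\big)\cong S'\big(H^{q}(J^{p,\bullet})\big)$. By the defining property of the resolution, $H^{q}(J^{p,\bullet})$, regarded as a complex in $p$, is an injective resolution of $H^{q}(S(I^{\bullet}))=R^{q}S(M)$; hence taking cohomology in the $p$-direction gives $E_{2}^{p,q}=R^{p}S'(R^{q}S(M))$. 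Since this spectral sequence converges to the same total cohomology $H^{\bullet}(\mathrm{Tot}(K))=R^{p+q}(S'\circ S)(M)$, its convergence reads $E_{\infty}^{p,q}=\mathrm{Gr}^{p}_{F}R^{p+q}(S'\circ S)(M)$, exactly as asserted.

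I expect the one genuinely delicate point to be the commutation of $S'$ with the $q$-cohomology of $J^{p,\bullet}$: since $S'$ is only left exact, this fails for an arbitrary double complex, and it is precisely the splitting built into the Cartan--Eilenberg resolution that makes it hold. Everything else---the existence of the Cartan--Eilenberg resolution, the convergence of the two first-quadrant spectral sequences, and the vanishing $R^{p}S'(S(I^{q}))=0$ for $p>0$---is standard once the acyclicity hypothesis on $S$ has been used.
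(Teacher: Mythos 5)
Your proof is correct and is essentially the same argument as the paper's: the paper does not prove this theorem itself but cites Voisin's Theorem 4.9, explicitly noting that it "is based on the Cartan--Eilenberg resolution," which is precisely the double-complex argument you carry out (collapse of one spectral sequence via $S'$-acyclicity of $S(I^{q})$ to identify the abutment, and the split exact sequences of cocycles, coboundaries and cohomology in the Cartan--Eilenberg resolution to compute the $E_2$-page of the other). Your identification of the delicate point---that left exactness of $S'$ alone would not let it commute with cohomology, and that the injectivity-forced splitting is what saves this---is exactly right.
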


\subsection{The Leray spectral sequenses}
Recall the classical Leray spectral sequence (see \cite[4.1.3]{Voisin}). It is a special case of spectral sequence of a composed functors. Let $X, Y$ be topological spaces and $\phi\colon X\to Y$ be a continuous map. Denote by $\mathcal{SH}(Z)$ the category of sheaves of abelian group on the topological manifold $Z$. Let $\mathcal{AB}$ be the category of abelian groups.

Consider the direct image functor $\phi_{*}\colon \mathcal{SH}(X) \to \mathcal{SH}(Y)$ and the global sections functor $\Gamma(Y,\bullet)\colon \mathcal{SH}(Y)\to \mathcal{AB}$. Note that, for any sheaf $\mathcal{F}\in \mathcal{SH}(X)$ we have $\Gamma(X,\mathcal{F})=\Gamma(Y,\phi_{*}\mathcal{F})$.

Since each injective sheaf on $X$ is a flabby, each flabby sheaf is $\Gamma$-acyclic and $\phi_{*}\mathcal{F}$ is flabby sheaf on $Y$ for every flabby sheaf $\mathcal{F}$ on $X$, it follows that for every injective sheaf $\mathcal{I}$ we have that $\phi_{*}\mathcal{I}$ is a $\Gamma$-acyclic sheaf on $Y$.

We obtain the following theorem (\cite[4.1.3]{Voisin}):

\begin{theorem}
For every sheaf $\mathcal{F}\in \mathcal{SH}(X)$, there exists a canonical filtration $F$ on $H^{q}(X,\mathcal{F})$, and a spectral sequence $E_{r}^{p,q}\Longrightarrow H^{p+q}(X,\mathcal{F})$ that is canonical starting from $E_2$, and satisfies $E_{2}^{p,q}=H^{p}(Y,R^{q}\phi_{*}\mathcal{F}), E^{p,q}_{\infty}=Gr_{F}^{p}H^{p+q}(X,\mathcal{F})$.
\end{theorem}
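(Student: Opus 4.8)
The plan is to recognize this statement as a direct instance of the spectral sequence of composed functors, Theorem \ref{composedfunct}, applied to the pair of left exact functors $S=\phi_{*}\colon\mathcal{SH}(X)\to\mathcal{SH}(Y)$ and $S'=\Gamma(Y,\bullet)\colon\mathcal{SH}(Y)\to\mathcal{AB}$. Accordingly, I would split the proof into two tasks: first, checking that $(S,S')$ satisfies the three hypotheses of Theorem \ref{composedfunct} (enough injectives in the source categories, left exactness of both functors, and the requirement that $S$ send injective objects to $S'$-acyclic objects); and second, identifying the abutment and the $E_2$-page of the resulting spectral sequence with the objects named in the statement.

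For the first task, the categories $\mathcal{SH}(X)$ and $\mathcal{SH}(Y)$ of sheaves of abelian groups have enough injectives, and both $\phi_{*}$ and $\Gamma(Y,\bullet)$ are left exact, so only the acyclicity hypothesis requires attention. This is exactly the chain of implications recorded in the paragraph preceding the statement: every injective sheaf on $X$ is flabby, $\phi_{*}$ carries flabby sheaves to flabby sheaves on $Y$, and every flabby sheaf is $\Gamma(Y,\bullet)$-acyclic; hence $\phi_{*}\mathcal{I}$ is $\Gamma(Y,\bullet)$-acyclic for every injective $\mathcal{I}$ on $X$. Thus all hypotheses of Theorem \ref{composedfunct} hold with $M=\mathcal{F}$.

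For the second task, I would invoke the functorial identity $\Gamma(Y,\bullet)\circ\phi_{*}=\Gamma(X,\bullet)$, valid on all of $\mathcal{SH}(X)$ and not merely on individual objects, which shows that the composed functor $S'\circ S$ is precisely the global sections functor on $X$. Consequently its right derived functors are sheaf cohomology, $R^{p+q}(S'\circ S)(\mathcal{F})=H^{p+q}(X,\mathcal{F})$, which supplies the abutment. The $E_2$-page furnished by Theorem \ref{composedfunct} then reads $E_{2}^{p,q}=R^{p}S'(R^{q}S(\mathcal{F}))=H^{p}(Y,R^{q}\phi_{*}\mathcal{F})$, and the canonical filtration $F$ together with the identification $E_{\infty}^{p,q}=Gr_{F}^{p}R^{p+q}(S'\circ S)(\mathcal{F})=Gr_{F}^{p}H^{p+q}(X,\mathcal{F})$ transports verbatim. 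The canonicity of the spectral sequence from the $E_2$-page onward is inherited directly from the corresponding property in Theorem \ref{composedfunct}, which itself rests on the Cartan--Eilenberg resolution.

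Since the argument is a verification of hypotheses rather than a fresh construction, there is no deep obstacle; the only genuine point of content is the acyclicity of $\phi_{*}\mathcal{I}$, already dispatched above via flabbiness. The one subtlety I would be careful to state precisely is that $\Gamma(Y,\bullet)\circ\phi_{*}$ and $\Gamma(X,\bullet)$ coincide \emph{as functors}, because it is this functorial (not merely objectwise) equality that licenses the identification of the derived functors of the composite with $H^{\bullet}(X,\bullet)$, and hence of the abutment in the statement.
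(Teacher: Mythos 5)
Your proposal is correct and follows essentially the same route as the paper: the text preceding the statement verifies exactly the hypotheses you check (injective sheaves are flabby, $\phi_{*}$ preserves flabbiness, flabby sheaves are $\Gamma$-acyclic, and $\Gamma(X,\mathcal{F})=\Gamma(Y,\phi_{*}\mathcal{F})$), and then obtains the theorem as the instance of Theorem \ref{composedfunct} with $S=\phi_{*}$, $S'=\Gamma(Y,\bullet)$, citing Voisin. Your explicit remark that the identity $\Gamma(Y,\bullet)\circ\phi_{*}=\Gamma(X,\bullet)$ must hold as an equality of functors is a careful touch the paper leaves implicit, but it does not change the argument.
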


We have to study the group cohomology with compact supports of sheaf on $X$. In this case we consider the functors $\Gamma_{c}$ and $\phi_{!}$ instead $\Gamma$ and $\phi_{*}$.  

Let $X$ be a locally compact space. Recall that the support of a global section $s\in \Gamma(X,\mathcal{F})$ is the set $supp(s)=\{x\in X\mid s_{x}\neq 0\}$.

\begin{definition}
For a sheaf $\mathcal{F}\in\mathcal{SH}(X)$ put $$\Gamma_{c}(X,\mathcal{F}):=\{s\in \Gamma(X,\mathcal{F})\mid supp(s) \text{ is compact}\}.$$
\end{definition}

This defines a left exact functor $\Gamma_{c}(X,-)\colon \mathcal{SH}(X)\to\mathcal{AB}$. The i'th derived functor of $\Gamma_{c}(X,-)$ evaluated on the sheaf $\mathcal{F}$ will be denoted $H^{i}_{c}(X,\mathcal{F})$. 

\begin{remark}
Let $X$ be a locally compact space. Note that any soft sheaf $S\in\mathcal{SH}(X)$ is $\Gamma_{c}(X,-)$-acyclic \cite[Secton III, Theorem 2.7]{Iversen}. 
\end{remark}

Now let $\phi\colon X\to Y$ be a continuous map between locally compact spaces. Define the functor $\phi_{!}$ (see \cite[Sections III and VII]{Iversen}).

\begin{definition}
For a sheaf $\mathcal{F}\in \mathcal{SH}(X)$ and an open subset $V\subset Y$ put $$\Gamma(V,\phi_{!}\mathcal{F}):=\{s\in \Gamma(\phi^{-1}(V),\mathcal{F})\mid \phi\mid_{supp(s)}\colon supp(s)\to Y \text{ is a proper map}\}.$$
We consider $\phi_{!}\mathcal{F}$ as a subpreasheaf of $\phi_{*}\mathcal{F}$.
\end{definition}

\begin{remark}\label{directimage}
Now we list some properties of the presheaf $\phi_{!}\mathcal{F}$ (see \cite[Section VII]{Iversen}).
\begin{enumerate}
\item The presheaf $\phi_{!}\mathcal{F}$ is a sheaf on $Y$; In particular we obtain the left exact functor $\phi_{!}\colon \mathcal{SH}(X)\to \mathcal{SH}(Y)$ and it has the derived functors which will be denoted $R^{i}\phi_{!}$.
\item For $y\in Y$ we have a natural isomorphism $$(R^{i}\phi_{!}\mathcal{F})_{y}\cong H^{i}_{c}(\phi^{-1}(y),\mathcal{F}\mid_{\phi^{-1}(y)}).$$
\item  A soft sheaf $\mathcal{S}\in\mathcal{SH}(X)$ is 
transformed into a soft sheaf $\phi_{!}\mathcal{S}\in\mathcal{SH}(Y)$. Moreover $R^{i}\phi_{!}\mathcal{S}=0$ for $i>0$.
\end{enumerate}
\end{remark}

The following lemma allows to apply Theorem \ref{composedfunct}. 

\begin{lemma}\label{spectrallemma}
Let $X,Y$ be locally compact spaces and $\phi\colon X\to Y$ be a continuous map. 
\begin{enumerate}
\item For any sheaf $\mathcal{F}\in \mathcal{SH}(X)$ we have $\Gamma_{c}(Y,\phi_{!}\mathcal{F})=\Gamma_{c}(X,\mathcal{F})$;
\item If $\mathcal{I}\in\mathcal{SH}(X)$ is an injective sheaf, then $\phi_{!}\mathcal{I}$ is $\Gamma_{c}(Y,-)$-acyclic. 
\end{enumerate}
\end{lemma}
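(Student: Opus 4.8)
The plan is to prove the two parts separately, reducing everything to the definitions together with the properties collected in Remark \ref{directimage}. For part (1), I would first unwind the definitions: taking $V=Y$ in the definition of $\phi_!$, a global section of $\phi_!\mathcal{F}$ over $Y$ is precisely a section $s\in\Gamma(X,\mathcal{F})$ for which $\phi\mid_{supp(s)}\colon supp(s)\to Y$ is proper. The key step is then to identify the support of such an $s$ \emph{regarded as a section of} $\phi_!\mathcal{F}$. Using the stalk isomorphism of Remark \ref{directimage}(2) with $i=0$, namely $(\phi_!\mathcal{F})_y\cong H^{0}_{c}(\phi^{-1}(y),\mathcal{F}\mid_{\phi^{-1}(y)})=\Gamma_{c}(\phi^{-1}(y),\mathcal{F}\mid_{\phi^{-1}(y)})$, the germ $s_y$ corresponds to the restriction $s\mid_{\phi^{-1}(y)}$. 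Since the stalk of $\mathcal{F}\mid_{\phi^{-1}(y)}$ at a point $x$ of the fiber equals $\mathcal{F}_x$, one sees that $s_y\neq 0$ if and only if the fiber $\phi^{-1}(y)$ meets $supp_X(s)$, which yields the pointwise identity $supp_Y(s)=\phi(supp_X(s))$.

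Granting this identity, both inclusions become short point-set topology arguments. If $s\in\Gamma_{c}(X,\mathcal{F})$, then $A:=supp_X(s)$ is compact, so $\phi\mid_A$ is proper (a continuous map from a compact space into a locally compact Hausdorff space is proper); hence $s$ is a section of $\phi_!\mathcal{F}$, and $supp_Y(s)=\phi(A)$ is compact, giving $s\in\Gamma_{c}(Y,\phi_!\mathcal{F})$. Conversely, if $s\in\Gamma_{c}(Y,\phi_!\mathcal{F})$ then $\phi\mid_A$ is proper and $K:=\phi(A)=supp_Y(s)$ is compact; therefore $A=(\phi\mid_A)^{-1}(K)$ is compact by properness, so $s\in\Gamma_{c}(X,\mathcal{F})$. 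This establishes the equality of functors $\Gamma_{c}(Y,-)\circ\phi_!=\Gamma_{c}(X,-)$ asserted in (1).

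For part (2), I would run the standard softness chain. An injective sheaf $\mathcal{I}$ on $X$ is flabby, and on a locally compact space a flabby sheaf is soft. By Remark \ref{directimage}(3) the functor $\phi_!$ carries soft sheaves to soft sheaves, so $\phi_!\mathcal{I}$ is soft on $Y$. Finally, a soft sheaf is $\Gamma_{c}(Y,-)$-acyclic (the Remark following the definition of $\Gamma_c$, i.e. Iversen III, Theorem 2.7), whence $H^{i}_{c}(Y,\phi_!\mathcal{I})=0$ for all $i>0$, which is exactly the desired acyclicity; this is the input needed to apply Theorem \ref{composedfunct} to $S=\phi_!$ and $S'=\Gamma_{c}(Y,-)$.

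The main obstacle is the support computation in part (1): one must pass to the fiber correctly through the stalk description of $\phi_!\mathcal{F}$ and verify that the restriction-to-fiber map detects $supp_X(s)$ pointwise, after which the properness manipulations are routine. In part (2) the only nontrivial input is the implication ``flabby $\Rightarrow$ soft'' in the locally compact setting, which I would simply cite from Iversen rather than reprove, the remaining steps being immediate from Remark \ref{directimage}.
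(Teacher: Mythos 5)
Your proposal is correct and takes essentially the same route as the paper: part (1) rests on the observation that compactness of $supp(s)$ forces $\phi\mid_{supp(s)}$ to be proper, and part (2) is the chain injective $\Rightarrow$ soft, $\phi_{!}$ preserves softness (Remark on $\phi_!$), soft $\Rightarrow$ $\Gamma_{c}(Y,-)$-acyclic. You in fact supply more detail than the paper, which only states the forward inclusion explicitly; your support identity $supp_{Y}(s)=\phi(supp_{X}(s))$ and the resulting reverse inclusion (compact support in $Y$ plus properness gives compact support in $X$) fill in exactly what the paper leaves implicit.
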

\begin{proof}
If $s\in \Gamma_{c}(X,\mathcal{F})$, then $\phi\mid_{supp(s)}\colon supp(s)\to Y$ is a proper map. It follows that $\Gamma_{c}(Y,\phi_{!}\mathcal{F})=\Gamma_{c}(X,\mathcal{F})$.

Futher, since each injective sheaf $\mathcal{I}$ is soft, it follows that $\phi_{!}\mathcal{I}$ is soft (by Remark \ref{directimage}). But each soft sheaf is $\Gamma_{c}(Y,-)$-acyclic.  
\end{proof}

So, we obtain the following result:

\begin{theorem}\label{compactcohomspectr}
Let $X,Y$ be locally compact spaces and let $\phi\colon X\to Y$ be a continuous map. For every sheaf $\mathcal{F}\in \mathcal{SH}(X)$, there exists a canonical filtration $F$ on $H_{c}^{q}(X,\mathcal{F})$, and a spectral sequence $$E_{r}^{p,q}\Longrightarrow H_{c}^{p+q}(X,\mathcal{F})$$ that is canonical starting from $E_2$, and satisfies $E_{2}^{p,q}=H^{p}_{c}(Y,R^{q}\phi_{!}\mathcal{F}), E^{p,q}_{\infty}=Gr_{F}^{p}H_{c}^{p+q}(X,\mathcal{F})$.
\end{theorem}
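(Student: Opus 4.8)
The plan is to realize $H^{\bullet}_{c}(X,-)$ as the derived functor of a composite and then invoke the composed-functor spectral sequence of Theorem \ref{composedfunct}. Concretely, I would take $\mathcal{A}=\mathcal{SH}(X)$, $\mathcal{B}=\mathcal{SH}(Y)$, $\mathcal{C}=\mathcal{AB}$, with $S=\phi_{!}$ and $S'=\Gamma_{c}(Y,-)$. Both categories of sheaves of abelian groups have enough injectives, so the hypothesis on $\mathcal{A}$ and $\mathcal{B}$ is met; $\phi_{!}$ is left exact by Remark \ref{directimage}(1), and $\Gamma_{c}(Y,-)$ is left exact by construction.

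The two facts that make the machine run are exactly the content of Lemma \ref{spectrallemma}. Part (2) says that $\phi_{!}$ sends injective sheaves on $X$ to $\Gamma_{c}(Y,-)$-acyclic sheaves on $Y$, which is precisely the acyclicity hypothesis ``$S$ transforms injective objects into $S'$-acyclic objects'' required by Theorem \ref{composedfunct}. Part (1) gives the equality of functors $S'\circ S=\Gamma_{c}(Y,\phi_{!}-)=\Gamma_{c}(X,-)$, so that the right derived functors of the composite are literally $R^{p+q}(S'\circ S)=H^{p+q}_{c}(X,-)$.

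With these verifications in place, I would simply apply Theorem \ref{composedfunct} to the sheaf $\mathcal{F}$. It produces a canonical filtration $F$ on $R^{p+q}(S'\circ S)(\mathcal{F})=H^{p+q}_{c}(X,\mathcal{F})$ together with a spectral sequence $E_{r}^{p,q}\Longrightarrow H^{p+q}_{c}(X,\mathcal{F})$ whose second page is
$$E_{2}^{p,q}=R^{p}S'\bigl(R^{q}S(\mathcal{F})\bigr)=H^{p}_{c}(Y,R^{q}\phi_{!}\mathcal{F}),$$
and whose associated graded is $E_{\infty}^{p,q}=Gr_{F}^{p}H^{p+q}_{c}(X,\mathcal{F})$. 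The canonicity of the filtration and of the pages from $E_{2}$ onward is inherited directly from the Cartan--Eilenberg construction underlying Theorem \ref{composedfunct}.

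I do not expect a genuine obstacle here: the proof is a verification that the hypotheses of the composed-functor theorem hold, and both non-formal inputs are already isolated in Lemma \ref{spectrallemma}. The only point demanding a little care is that the identification of the abutment rests on the equality of functors $\Gamma_{c}(Y,\phi_{!}-)=\Gamma_{c}(X,-)$ rather than on a mere natural isomorphism; since Lemma \ref{spectrallemma}(1) furnishes an honest equality, the derived functors coincide on the nose and the abutment is unambiguously $H^{\bullet}_{c}(X,\mathcal{F})$.
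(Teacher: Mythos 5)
Your proposal is correct and follows exactly the same route as the paper: apply Theorem \ref{composedfunct} with $S=\phi_{!}$, $S'=\Gamma_{c}(Y,-)$, using Lemma \ref{spectrallemma}(2) for the acyclicity hypothesis and Lemma \ref{spectrallemma}(1) to identify the composite with $\Gamma_{c}(X,-)$. The paper states this in a single line; your write-up is simply the same argument with the verifications made explicit.
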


\begin{proof}
We apply Theorem \ref{composedfunct} to $\mathcal{A}=\mathcal{SH}(X),\mathcal{B}=\mathcal{SH}(Y),\mathcal{C}=\mathcal{AB}$, $F=\phi_{!}, F'=\Gamma_{c}(Y,-)$ and use Lemma \ref{spectrallemma}.
\end{proof}

In particalar we obtain the following corollary

\begin{corollary}\label{corcompactspectral}
Let $X,Y$ be locally compact spaces and let $\phi\colon X\to Y$ be a continuous map and $\mathcal{F}\in \mathcal{SH}(X)$. If $R^{i}\phi_{!}\mathcal{F}=0$ for all $i<q$, then $H^{i}_{c}(X,\mathcal{F})=0$ for all $i<q$ and $H^{q}_{c}(X,\mathcal{F})\cong \Gamma_{c}(Y,R^{q}\phi_{!}\mathcal{F})$. 
\end{corollary}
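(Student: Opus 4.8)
The plan is to read off both assertions directly from the spectral sequence of Theorem \ref{compactcohomspectr}, whose $E_2$-page is $E_2^{p,q}=H^p_c(Y,R^q\phi_!\mathcal{F})$ and which abuts to $H^{p+q}_c(X,\mathcal{F})$ through the canonical filtration $F$. The key observation is that the hypothesis $R^i\phi_!\mathcal{F}=0$ for all $i<q$ kills every entry in the bottom $q$ rows: since $H^p_c(Y,0)=0$, we have $E_2^{p,j}=0$ for all $p$ and all $j<q$, and as a subquotient $E_\infty^{p,j}=0$ in the same range.

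For the first assertion I would fix $n<q$ and inspect the graded pieces of the filtration, $Gr_F^p H^n_c(X,\mathcal{F})=E_\infty^{p,n-p}$ for $0\le p\le n$. Because $0\le n-p\le n<q$, each such entry lies in a vanishing row, so every graded piece is zero; since the filtration is finite, $H^n_c(X,\mathcal{F})=0$.

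For the second assertion I would take $n=q$ and examine $Gr_F^p H^q_c(X,\mathcal{F})=E_\infty^{p,q-p}$. For $p\ge 1$ the row index $q-p<q$, so these pieces vanish, leaving $E_\infty^{0,q}$ as the only possibly nonzero graded piece. It then remains to identify $E_\infty^{0,q}$ with $E_2^{0,q}$ via the edge homomorphism: the differentials out of $(0,q)$ are $d_r\colon E_r^{0,q}\to E_r^{r,q-r+1}$ with target in row $q-r+1<q$ for $r\ge 2$, hence zero, while the differentials into $(0,q)$ originate from $E_r^{-r,q+r-1}=0$ by the first-quadrant condition. Thus $E_\infty^{0,q}=E_2^{0,q}=H^0_c(Y,R^q\phi_!\mathcal{F})=\Gamma_c(Y,R^q\phi_!\mathcal{F})$, and since the filtration on $H^q_c(X,\mathcal{F})$ collapses onto this single piece ($F^1=0$ because all higher graded pieces vanish), the edge homomorphism yields the asserted isomorphism $H^q_c(X,\mathcal{F})\cong \Gamma_c(Y,R^q\phi_!\mathcal{F})$.

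This argument is entirely formal, so there is no genuine obstacle beyond careful bookkeeping of indices. The one point demanding slight attention is the second assertion, where one must verify that $(0,q)$ is a permanent cocycle receiving no boundaries, i.e. that neither incoming nor outgoing differentials on any page $E_r$ with $r\ge 2$ can be nonzero; this is exactly the edge-homomorphism computation above. One then confirms that the degeneration of the filtration promotes the equality of graded pieces to an honest isomorphism of the abutment.
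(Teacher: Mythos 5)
Your proposal is correct and follows essentially the same route as the paper: both arguments invoke the compactly supported Leray spectral sequence of Theorem \ref{compactcohomspectr}, observe that the hypothesis kills all rows $E_2^{p,j}$ with $j<q$ (hence all graded pieces of $H^i_c(X,\mathcal{F})$ for $i<q$), and then identify $H^q_c(X,\mathcal{F})$ with $E_2^{0,q}=\Gamma_c(Y,R^q\phi_!\mathcal{F})$ via the edge homomorphism, checking that all differentials into and out of the $(0,q)$ spot vanish. The only cosmetic difference is that you phrase the final identification explicitly as a collapse of the filtration onto its zeroth graded piece, which the paper states more tersely.
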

\begin{proof}
Note that, $F^{s}H^{i}(X,\mathcal{F})=0$ for all $s>i$. Since $E_{\infty}^{k,l}=0$ for all $k+l=i<q$, it follows that $H^{i}_{c}(X,\mathcal{F})=0$ for all $i<q$. 

Futher, we have $E_{2}^{p,i}=0$ for all $p$ and for all $i<q$. It follows that $E^{i,q-i}_{\infty}=Gr^{i}_{F}H^{q}_{c}(X,\mathcal{F})=0$ for all $0<i\leq q$. This means that $F^{i}H^{q}_{c}(X,\mathcal{F})=F^{i+1}H^{q}_{c}(X,\mathcal{F})=0$ for all $1\leq i\leq q$. 

Since the differentials $d_{r}\colon E_{r}^{0,q}\to E_{r}^{r,q-r+1}=0$ and $0=E_{r}^{-r,q+r-1}\to E_{r}^{0,q}$ are zero, it follows that $H^{q}_{c}(X,\mathcal{F})=Gr_{F}^{0}H^{q}_{c}(X,\mathcal{F})=E_{\infty}^{0,q}\cong E_{2}^{0,q}=\Gamma_{c}(Y,R^{q}\phi_{!}\mathcal{F})$. 
\end{proof}

\subsection{Differential-geometric description of $R^{i}\phi_{!}\mathcal{O}_{X}$}

For the structure sheaf $\mathcal{O}_{X}$ on the total space $X$ of a holomorphic fiber bundle $\phi\colon X\to Y$ with a fiber $F$ we have a more explicit formula for the sheaves $R^{i}\phi_{!}\mathcal{O}_{X}$. For the sheaf $\mathcal{O}_{X}$ there exists the Dolbeault resolution $(\mathcal{A}_{X}^{0,\bullet},\bar{\partial})$. Since the sheaves $\mathcal{A}_{X}^{0,\bullet}$ are c-soft, then the resolution $(\mathcal{A}_{X}^{0,\bullet},\bar{\partial})$ is $\phi_{!}$-injective. It follows that $$R^{i}\phi_{!}\mathcal{O}_{X}\cong \frac{Ker (\bar{\partial}\colon\phi_{!}\mathcal{A}_{X}^{0,i}\to \phi_{!}\mathcal{A}_{X}^{0,i+1})}{Im (\bar{\partial}\colon\phi_{!}\mathcal{A}_{X}^{0,i-1}\to \phi_{!}\mathcal{A}_{X}^{0,p})}.$$

For any section $\omega\in R^{i}\phi_{!}\mathcal{O}_{X}(U)$ there exists a covering $U=\bigcup\limits_{\alpha\in A}U_{\alpha}$ and $\omega_{\alpha}\in \mathcal{A}^{0,i}(U_{\alpha}\times F)$ such that 
\begin{enumerate}
\item $\bar{\partial}\omega_{\alpha}=0$; 
\item $pr_{1}\colon supp(\omega_{\alpha})\to U_{\alpha}$ is a proper map;
\item For any $\alpha,\beta$ there exists a covering $U_{\alpha}\cap U_{\beta}=\bigcup\limits_{i}U_{\alpha\beta, i}$ and $$\eta_{\alpha\beta,i}\in \mathcal{A}^{0,i-1}(U_{\alpha\beta, i}\times F)$$ such that $supp(s_{i})\to U_{\alpha\beta, i}$ is a proper map and $$\bar{\partial}\eta_{\alpha\beta,i}=(\omega_{\alpha}-\omega_{\beta})\mid_{U_{\alpha\beta,i}}.$$
\end{enumerate}

But this description of the sheaves $R^{i}\phi_{!}\mathcal{O}_{X}$ is not used in this paper.

\section{Canonical topologies on the sheaf cohomology groups and the topological K\"unnet formula}

In this section we recall some notions and facts from topological vector spaces (for more details see \cite{GROTHENDIECK, Schaefer, Pietsch}) and its applications to the cohomology theory of sheaves (see \cite{BanStan}). 

\subsection{FS and DFS type spaces and properties}
A locally convex topological vector space over $\mathbb{C}$, in addition metrizable and complete is called a Fr\'echet space. A Fr\'echet space is called FS space (Fr\'echet-Schwartz space) is for any neighbourhood of zero $U$ there is a neighbourhood of zero $V$ such that: for any $\varepsilon>0$ there are $x_{1},\cdots, x_n\in V$ with the property $V \subset \bigcup\limits_{i=1}^{n}(x_{i}+\varepsilon U)$. A locally convex topological vector space is called a DFS space if it is the strong dual of FS space. If $E$ is a localy convex space, then we denote by $E'$ the strong dual space of $E$. By QFS and QDFS we mean quotients of such spaces. 

Let $f\colon E\to F$ be a continuous $\mathbb{C}$-linear map between topological vector spaces over $\mathbb{C}$. The map $f$ is called a topological homomorphism if the quotient topology on $f(E)$ coincides with the induced topology from $F$. 

Two topological vector spaces $E, F$ are in topological duality if there exists a bilinear and separately continuous map $E\times F\to\mathbb{C}$ such that the induced maps $E \to F'$, $F \to E'$ are topological isomorphisms. 

A complex $(E^{\bullet},d)$ is called complexes of topological vector spaces if every $E^{i}$ is a topological vector space over $\mathbb{C}$ and $d^{i}\colon E^{i}\to E^{i+1}$ is a continuous $\mathbb{C}$-linear map. A continuous morphism between two TVS complexes is a morphism of complexes (of zero degree) whose components are continuous and $\mathbb{C}$-linear. If $f\colon E^{\bullet} \to F^{\bullet}$ is such a morphism, then the linear maps $$H^{n}(f)\colon H^{n}(E^{\bullet})\to H^{n}(F^{\bullet})$$ are continuous.

Now we collect some facts about FS and DFS spaces(for details see \cite{BanStan}).
\begin{itemize}
\item FS(DFS) space is reflexive;
\item Any closed subspace of FS (DFS) space is an FS (DFS) space;
\item Quotient of an FS (DFS) space by a closed subspace is also an FS (DFS) space;
\item A surjective continuous $\mathbb{C}$-linear map between two FS (DFS) spaces is open; 
\item Any separated locally convex topological vector space, countable inductive limit of Fr\'echet spaces such that the maps of the inductive system are compact, is DFS;
\item Any locally convex topological vector space, countable inductive limit of Fr\'echet spaces such that the maps of the inductive system are compact and injective, is separated and DFS;
\item If $E=\varinjlim E_n$ is an inductive limit (in the category of locally convex topological vector spaces) of FS spaces such that the maps $E_n \to E_{n+1}$ are compact and
in addition E is separated, then any bounded subset of $E$ is the image of a bounded subset of some $E_n$;
\item Let $f\colon E\to F$ be a continuous linear map between FS (DFS) spaces, then $f$ is a topological homomorphism if and only if $f(E)$ is a closed subspace $F$;
\item If $E\overset{f}{\to} F\overset{g}{\to} G$ is a complex of FS (DFS) spaces, then the cohomology space $ker g/im f$ is separated if and only if $f$ is a topological homomorphism;
\item Let $E\overset{f}{\to} F\overset{g}{\to} G$ be a complex between locally convex spaces and $g$ is a topological homomorphism, let $G'\overset{g'}{\to} F'\overset{f'}{\to} E'$ be the corresponding topological dual complex. Then there is a natural algebraic isomorphism $$Ker f'/Im g'\cong (Ker g/Im f)';$$
\item Let $f\colon E^{\bullet}\to F^{\bullet}$ be a continuous morphism between two FS (DFS) complexes. If $f$ is an algebraic quasi-isomorphism, then it is a topological quasi-isomorphism (induces topological isomorphism to 
cohomology).
\item Let $E^{\bullet}$ be an acyclic complex of FS or DFS spaces. Then the dual complex $E'^{\bullet}$ is acyclic.
\item Let $f\colon E^{\bullet}\to F^{\bullet}$ be a topological quasi-isomorphism between  FS or DFS complexes. Then the dual morphism $f'$ is a topological quasi-isomorphism.
\end{itemize}

Now we consider some classical examples. 
\begin{example}
Let $X$ be a complex manifold, $\dim X=n$. Let $\mathcal{A}_{X}^{p,q}$ ($\mathcal{D}^{p,q}_{X}$) be the sheaf of germs of differential forms of the type $(p,q)$ with coefficients $C^{\infty}$-functions (distributions respectively), $\mathcal{O}_{X}$ be the sheaf of germs of holomorphic functions. 
\begin{enumerate}
\item The space of global section $\Gamma(X, \mathcal{A}_{X}^{p,q})$ is an FS space (with respect to the topology of the 
uniform convergence of the coefficients of the form and all their derivatives); 
\item The space of global section with compact support $\Gamma_{c}(X, \mathcal{A}^{p,q}_{X})$ is a DFS space (since $ \Gamma_{c}(X, \mathcal{A}^{p,q}_{X})=(\Gamma(X, \mathcal{A}_{X}^{n-p,n-q}))'$);
\item The space $\Gamma(X,\mathcal{O}_{X})$ is an FS space (with respect to the topology of the uniform convergence on 
compacts);
\item If $U$ is a relatively compact open subset of $X$, then the restriction map $\mathcal{O}(X) \to \mathcal{O}(U)$ is compact. Let $K$ be a compact subset of $X$ and $\{U_{n}\}_{n\geq 0}$ a fundamental system of neighbourhoods of $K$
such that $U_{n+1}\Subset U_{n}$ for any $n>0$. Suppose in addition that any connected component of each $U_n$ does intersect $K$. The restriction maps $\mathcal{O}(U_{n})\to\mathcal{O}(U_{n+1})$ are compact and injective. It follows that the space $\mathcal{O}(K)=\varinjlim \mathcal{O}(U_n)$ (with topology of inductive limit in the category of locally convex topological vector space) is separated, a DFS and LF space. And any bounded subset of $\mathcal{O}(K)$ is the image of a bounded subset of some $\mathcal{O}(U_n)$.
\item The spaces $H^{\bullet}_{c}(X,\mathcal{O}_{X})$ has a natural QDFS topology, since it can be computed by means of the complex $\Gamma_{c}(X,\mathcal{D}^{0,\bullet}_{X})$ and  $\Gamma_{c}(X,\mathcal{D}^{0,\bullet}_{X})=(\Gamma(X,\mathcal{E}^{n,n-\bullet}))'$.
\end{enumerate}
\end{example}

Let $X$ be a complex space and $\mathcal{F}$ be a coherent analytic sheaf on $X$. The space of global section $\Gamma(X,\mathcal{F})$ has natural structure of a topological vector space, which is FS if X has a countable topology (a countable basis of open subsets) \cite{Ganning}. Thus $\mathcal{F}$ becomes a Fr\'echet-Schwartz sheaf, that is a sheaf such that for any open subset $U$, $\Gamma(U, \mathcal{F})$ has an FS structure and for any two open subsets $V\subset U$, the restriction $\Gamma(U, \mathcal{F})\to \Gamma(V, \mathcal{F})$ are continuous. 

If $\mathcal{F}\to\mathcal{G}$ is a morphism of coherent analytic sheaves on $X$, then the map $\Gamma(X, \mathcal{F})\to \Gamma(X, \mathcal{G})$ is continuous. Moreover, if $X$ is a Stein space, then the map $\Gamma(X, \mathcal{F})\to \Gamma(X, \mathcal{G})$ is a topological homomorphism. 

\begin{remark}\cite[Chapter I, Proposition 2.10 and Lemma 2.11]{BanStan}\label{propDFSstein}
Let $X$ be a complex space, $K\subset X$ a Stein compact. Let  $\mathcal{F}, \mathcal{G}$ be coherent sheaves on $X$. Then 
\begin{enumerate}
\item the topology of inductive limit (in the category of local convex topological vector spaces) on $\Gamma(K,\mathcal{F})$ is separated, LF and DFS; 
\item the map $\Gamma(K,\mathcal{F})\to\Gamma(K,\mathcal{G}) $ is a topological homomorphism.
\end{enumerate}
\end{remark}

\subsection{QFS-topology on $H^{\bullet}(X,\mathcal{F})$}
Let $X$ be a complex space with countable topology, $\mathcal{F}$ be a coherent sheaf on $X$. Let $\mathfrak{U}$ be a countable Stein open covering of $X$. We have a canonical isomorphism $H^{\bullet}(\mathfrak{U},\mathcal{F})\cong H^{\bullet}(X,\mathcal{F})$. Let $U\in\mathfrak{U}$; then $\Gamma(U,\mathcal{F})$ has the FS topology, then the \v Cech complex $$C^{\bullet}(\mathfrak{U},\mathcal{F})=\prod\limits_{s=(s_{0},\cdots,s_p)} \Gamma(U_{s_0}\cap\cdots\cap U_{s_p},\mathcal{F})$$ becomes an FS complex. So, by passing to cohomology, one gets the natural QFS topology on $H^{\bullet}(X,\mathcal{F})$ (which is independent on the covering $\mathfrak{U}$). If $\mathcal{F}\to \mathcal{G}$ is a morphism of coherent sheaves, then the maps $H^{\bullet}(X, \mathcal{F})\to H^{\bullet}(X,\mathcal{G})$ are continuous. Moreover, if $H^{q}(X, \mathcal{F})\to H^{q}(X,\mathcal{G})$ is surjective, then it is topological. 

If we have an exact sequence $$0\to\mathcal{F}'\to\mathcal{F}\to\mathcal{F}''\to 0$$ then the connection homomorphisms $\delta\colon H^{q}(X,\mathcal{F}'')\to H^{q+1}(X,\mathcal{F}')$ are continuous. 

\subsection{QDFS-topology on $H^{\bullet}_{c}(X,\mathcal{F})$}
We now consider topologies on the groups $H^{\bullet}_{c}(X,\mathcal{F})$. For any Stein compact $K$ the space $\Gamma(K,\mathcal{F})$ has a natural $LF$ structure given by the equality $$\Gamma(K,\mathcal{F})=\varinjlim\limits_{U\supset K}\Gamma(U,\mathcal{F}).$$ This topological structure is DFS. 

Let $\mathfrak{K}$ be a locally finite covering of $X$ by Stein compacts (compact subsets of a complex space, admitting a fundamental system of Stein neighbourhoods). 

Note that $H^{q}(K,\mathcal{F})=0$ for $q\geq 1$ and for any Stein compact $K$. Define the complex of finite cochains $$C_{c}^{p}(\mathfrak{K},\mathcal{F}):=\bigoplus\limits_{s=(s_{0},\cdots,s_p)} \Gamma(K_{s_0}\cap\cdots\cap K_{s_p},\mathcal{F}).$$ 

Each $C^{p}_{c}(\mathfrak{K},\mathcal{F})$ has a natural DFS structure and one can easily show that the differentials $C^{p}_{c}(\mathfrak{K},\mathcal{F})\to C^{p+1}_{c}(\mathfrak{K},\mathcal{F})$ are continuous. Then on the groups $H^{\bullet}_{c}(X,\mathcal{F})$ a QDFS topology is obtained (which is independent of the covering) and it is called the natural QDFS topology.

\begin{remark}\cite[Chapter VII, Proposition 4.5]{BanStan}
Let $X$ be a complex manifold with countable basis and $\mathcal{F}$ be a coherent analytic sheaf on $X$, then the natural topologies on $H^{\bullet}(X,\mathcal{F})$ and $H^{\bullet}_{c}(X,\mathcal{F})$ obtained by means of the \v Cech cohomology, coincide with the topologies obtained by means of the Dolbeault-Grothendieck resolutions.
\end{remark}

\subsection{QDFS-topology on $H^{\bullet}_{c}(Z_y,\mathcal{F}\mid_{Z_y})$}

Now, let $X=Y\times F$ be the product of complex manifolds with countable basis, $\mathcal{F}$ be a coherent sheaf on $X$. Let $Z\subset F$ be a closed complex submanifold, $F_{y}=F\times \{y\}$, $Z_{y}=Z\times \{y\}$. Now we define the natural QDFS-topology on $H^{\bullet}_{c}(Z_y,\mathcal{F}\mid_{Z_y})$.

Let $\mathfrak{K}$ be a locally finite covering by Stein compacts of $Z_{y}$. Note that by Siu's theorem (see \cite[Theorem 3.1.1]{Forst}) any Stein compact $K\subset Z_y$ has a countable system of Stein neighbourhoods of the form $U\times V\subset X$, where $U$ is a Stein neighbourhood of the point $y$ in $Y$, and $V$ is a Stein neighbourhood of the set $K$ in $F_y$. It follows that for $q>0$: $$H^{q}(K,\mathcal{F}\mid_{Z_{y}})=\varinjlim\limits_{U\ni y, V\supset K}H^{q}(U\times V,\mathcal{F})=0.$$ 

Then the group cohomology $H^{\bullet}_{c}(Z_y,\mathcal{F}\mid_{Z_y})$ is obtained as cohomology of the \v Cech complex $C_{c}^{\bullet}(\mathfrak{K},\mathcal{F}\mid_{Z_{y}})$ where

$$C_{c}^{p}(\mathfrak{K},\mathcal{F}\mid_{Z_y}):=\bigoplus\limits_{s=(s_{0},\cdots,s_p)} \Gamma(K_{s_0}\cap\cdots\cap K_{s_p},\mathcal{F}\mid_{Z_y}).$$  

Note that $\Gamma(K_{s_0}\cap\cdots\cap K_{s_p},\mathcal{F}\mid_{Z_y})$ has a natural LF structure which is separated and DFS (see Remark \ref{propDFSstein}). So, $C_{c}^{p}(\mathfrak{K},\mathcal{F}\mid_{Z_y})$ is also separated and DFS, the group cohomology $H^{\bullet}_{c}(F_y,\mathcal{F}\mid_{Z_y})$ has a QDFS structure. 

\begin{remark}
Obviously, if $\phi\colon X\to Y$ is a holomorphic fiber bundle with fiber $F$ and $Z\subset F$ a complex submanifold, we obtain likewise that $H^{\bullet}_{c}(Z_y,\mathcal{F}\mid_{Z_y})$ has the natural QDFS-topology. 
\end{remark}

\subsection{Inductive topology on $H^{\bullet}_{c}(F_y,\mathcal{F}\mid_{F_y})$ with respect to the pair of spaces}
Using the long exact sequence of the pair we can define an inductive topology on $H^{\bullet}_{c}(F_y,\mathcal{F}\mid_{F_y})$. 

Let $X'=Y\times F'$ be the product of complex manifolds with countable basis, $\mathcal{F}$ be a coherent analytic sheaf on $X'$. Assume that $F'$ is a compact complex manifold and let $Z\subset F'$ be a proper closed complex submanifold. We put $F=F'\setminus Z$, $F_{y}:=\{y\}\times F$, $F'_{y}:=\{y\}\times F'$, $Z_{y}=F_{y}'\setminus F_{y}$ and $X=Y\times F$. 

For the pair $(F_{y},F_{y}')$ and the sheaf $\mathcal{F}\mid_{F_{y}'}$ we have the following short exact sequence of sheaves (here we denote by $i\colon Z_{y}\hookrightarrow F'_{y}$ the canonical embedding):

$$ 0\to (\mathcal{F}\mid_{F_{y}})^{F_{y}'}\to \mathcal{F}\mid_{F'_{y}}\to i_{*}(\mathcal{F}\mid_{Z_{y}})\to 0$$ which induces the following long exact sequence (recall that $(\mathcal{F}\mid_{F_{y}})^{F_{y}'}$ is the extension by zero of $\mathcal{F}\mid_{F_{y}}$):
 
\begin{multline}\label{exactpairsequence}
\xymatrix@C=0.5cm{
  \cdots \ar[r] & H^{p}(F_{y}',\mathcal{F}\mid_{F_{y}'}) \ar[r]^{f} & H^{p}(Z_{y},\mathcal{F}\mid_{Z_{y}}) \ar[r]^{g} & H^{p+1}_{c}(F_{y},\mathcal{F}\mid_{F_{y}}) \ar[r] & \\ \ar[r] & H^{p+1}(F_{y}',\mathcal{F}\mid_{F_{y}'}) \ar[r] & \cdots }
\end{multline} 

The spaces $H^{\bullet}(F_{y}',\mathcal{F}\mid_{F_{y}'})$, $H^{\bullet}(Z_{y},\mathcal{F}\mid_{Z_{y}})$ have canonical QDFS-topologies. Consider the cokernel of $f$ which is the quotient $$H^{p}(Z_{y},\mathcal{F}\mid_{Z_{y}})/f(H^{p}(F_{y}',\mathcal{F}\mid_{F_{y}'}))$$ with quotient topology. So, on the space $H^{p+1}_{c}(F_{y},\mathcal{F}\mid_{F_{y}})$ we may consider the inductive topology with respect to the canonical embedding $$H^{p}(Z_{y},\mathcal{F}\mid_{Z_{y}})/f(H^{p}(F_{y}',\mathcal{F}\mid_{F_{y}'}))\hookrightarrow H^{p+1}_{c}(F_{y},\mathcal{F}\mid_{F_{y}}).$$ 

This is the finest locally convex topology on $H^{p+1}_{c}(F_{y},\mathcal{F}\mid_{F_{y}})$ for which the canonical embedding is a continuous map (in particular, this topology is finer than the canonical QDFS-topology). This topology we call the inductive topology with repsect to the pair $(F'_{y},F_{y})$. 

Now, we may also define this topology using the notion of a mapping cone. Let $\mathfrak{K}'$ be a locally finite covering by Stein compacts of $F_{y}$ and $\mathfrak{K}''$ be a locally finite covering by Stein compacts of $Z_{y}$. Then $\mathfrak{K}:=\mathfrak{K}'\cup\mathfrak{K}''$ is a locally finite covering by Stein compacts of $F_{y}'$ (by Siu's theorem \cite[Theorem 3.1.1]{Forst}, each Stein compact of $Z_y$ is a Stein compact of $F_{y}'$). Note that for any $K'\in \mathfrak{K}'$ and for any $K''\in \mathfrak{K}''$ we have $K'\cap K''=\emptyset$.

We have the following short exact sequence of \v Cech complexes

$$0\to C_{c}^{\bullet}(\mathfrak{K}',\mathcal{F}\mid_{F_y})\overset{h}{\to} C_{c}^{\bullet}(\mathfrak{K},\mathcal{F}\mid_{F_y'})\overset{f}{\to} C_{c}^{\bullet}(\mathfrak{K}'',\mathcal{F}\mid_{Z_y})\to 0$$
which is split and also induces the long exact sequence (\ref{exactpairsequence}). 

Let $C^{\bullet}(h)$ be a cone of the map $h$. Recall that $$C^{p}(h)=C_{c}^{p+1}(\mathfrak{K}',\mathcal{F}\mid_{F_y})\oplus C_{c}^{p}(\mathfrak{K},\mathcal{F}\mid_{F_y'}), $$ $$\delta_{C(h)}=\begin{pmatrix}
-\delta'& 0\\
h & \delta
\end{pmatrix}$$
where $\delta', \delta$ are \v Cech differentials in $C_{c}^{\bullet}(\mathfrak{K}',\mathcal{F}\mid_{F_y}), C_{c}^{\bullet}(\mathfrak{K},\mathcal{F}\mid_{F_y'})$ respectively.

There exist the following maps: $$\alpha\colon C_{c}^{\bullet}(\mathfrak{K},\mathcal{F}\mid_{F_y'})\to C^{\bullet}(h), y\to (0,y),$$ $$\beta\colon C^{\bullet}(h)\to C_{c}^{\bullet+1}(\mathfrak{K}',\mathcal{F}\mid_{F_y}), (x,y)\to -x.$$ 

We have an algebraical quasi-isomorphism of complexes: $$s\colon C^{\bullet}(h)\to C_{c}^{\bullet}(\mathfrak{K}'',\mathcal{F}\mid_{Z_y}), (x,y)\to f(y).$$ 

We obtain the following isomorphism of exact sequences: 
\[\begin{diagram}\tiny
\node{\cdots} \arrow{e,t}{} \node{H^{p}(F_{y}',\mathcal{F}\mid_{F_{y}'})} \arrow{e,t}{f} \arrow{se,b}{\alpha} \node{H^{p}(Z_{y},\mathcal{F}\mid_{Z_{y}})} \arrow{e,t}{g} \node{H^{p+1}_{c}(F_{y},\mathcal{F}\mid_{F_{y}})}\arrow{e,t}{}\arrow{se,b}{} \node{\cdots}
\\
\node{\cdots} \arrow{ne,r}{} \node{} \node{H^{p}(C^{\bullet}(h))} \arrow{n,r}{s=iso} \arrow{ne,r}{\beta} \node{}\node{\cdots}
\end{diagram}\]
Note that the $s$ induces a topological isomorphism to cohomology. It follows that the inductive topology on $H^{p+1}_{c}(F_{y},\mathcal{F}\mid_{F_{y}})$ with respect to the pair $(F_{y}',F_{y})$ is the inductive topology with respect to the canonical embedding $$\beta \colon H^{p}(C^{\bullet}(h))/ \alpha(H^{p}(F_{y}',\mathcal{F}\mid_{F_{y}'}))\hookrightarrow H^{p+1}_{c}(F_{y},\mathcal{F}_{F_{y}}).$$

\subsection{The duality theorems}

For the complex manifolds there are the duality theorems due to Serre and Malgrange. We need the result for the structure sheaf.

\begin{theorem} \cite[Chapter VII, Theorems 4.1, 4.2]{BanStan}\label{duality}
Let $X$ be a complex manifold with countable basis.
\begin{enumerate}
\item The associated separated spaces of $H^{p}_{c}(X,\mathcal{O}_{X})$ and $H^{n-p}(X,\Omega^{n}_{X})$ are in  topological duality;
\item The space $H^{p}_{c}(X,\mathcal{O}_{X})$ is separated if and only if the space $H^{n-p+1}(X,\Omega^{n}_{X})$ is separated. 
\end{enumerate}
\end{theorem}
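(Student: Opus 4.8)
The plan is to realize both cohomology groups as the cohomology of two mutually dual Dolbeault complexes and then feed the resulting dual pair of complexes into the functional-analytic package for FS/DFS spaces collected in the list above.

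First I would fix the two complexes. For $H^{\bullet}(X,\Omega^{n}_{X})$ I use the $C^{\infty}$ Dolbeault resolution of $\Omega^{n}_{X}=\mathcal{O}(\bigwedge^{n}T^{*}X)$, so that $H^{n-\bullet}(X,\Omega^{n}_{X})$ is the cohomology of the FS complex $A^{\bullet}:=\Gamma(X,\mathcal{A}^{n,\bullet}_{X})$ with differential $\bar{\partial}$. For $H^{\bullet}_{c}(X,\mathcal{O}_{X})$ I use the distribution-valued Dolbeault resolution, which is c-soft, so that (as already recorded for the QDFS-topology) $H^{\bullet}_{c}(X,\mathcal{O}_{X})$ is the cohomology of the DFS complex $B^{\bullet}:=\Gamma_{c}(X,\mathcal{D}^{0,\bullet}_{X})$. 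Second, I would check that $B^{\bullet}$ is, up to the reindexing $p\mapsto n-p$ and a sign, the topological dual complex of $A^{\bullet}$. The pairing is integration of the wedge product, $\langle\alpha,\beta\rangle=\int_{X}\alpha\wedge\beta$ for $\alpha\in B^{p}$ of type $(0,p)$ and $\beta\in A^{n-p}$ of type $(n,n-p)$; by the identity $\Gamma_{c}(X,\mathcal{A}^{p,q}_{X})=(\Gamma(X,\mathcal{A}^{n-p,n-q}_{X}))'$ recorded above this realizes $B^{p}=(A^{n-p})'$ as strong duals. Since $\alpha$ has compact support, Stokes' theorem applied to the top-degree form $d(\alpha\wedge\beta)=\bar{\partial}(\alpha\wedge\beta)$ gives $\int_{X}\bar{\partial}\alpha\wedge\beta=\pm\int_{X}\alpha\wedge\bar{\partial}\beta$, so the transpose of $\bar{\partial}$ on $A^{\bullet}$ is $\pm\bar{\partial}$ on $B^{\bullet}$; thus $(B^{\bullet},\pm\bar{\partial})=((A^{n-\bullet})',(\bar{\partial})')$ as complexes.

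Third, I would deduce both assertions from the bulleted FS/DFS facts. For (2), the separatedness criterion says that $H^{n-p+1}(X,\Omega^{n}_{X})=\ker\bar{\partial}/\mathrm{im}(\bar{\partial}\colon A^{n-p}\to A^{n-p+1})$ is separated iff $\bar{\partial}\colon A^{n-p}\to A^{n-p+1}$ is a topological homomorphism, equivalently (closed-range criterion) iff its image is closed; dually $H^{p}_{c}(X,\mathcal{O}_{X})=\ker\bar{\partial}/\mathrm{im}(\bar{\partial}\colon B^{p-1}\to B^{p})$ is separated iff $\bar{\partial}\colon B^{p-1}\to B^{p}$ is a topological homomorphism. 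Under $B^{p-1}=(A^{n-p+1})'$, $B^{p}=(A^{n-p})'$ this second map is precisely the transpose of the first, so the two separatedness conditions coincide because a continuous linear map between FS/DFS spaces is a topological homomorphism iff its transpose is. For (1), the continuous dual of $H^{n-p}(X,\Omega^{n}_{X})$ depends only on its associated separated space, and equals the dual of the FS space $\overline{H}^{\,n-p}=\ker\bar{\partial}/\overline{\mathrm{im}\,\bar{\partial}}$; applying the algebraic duality lemma to the reduced complexes (in which images are replaced by their closures, so that the relevant maps become topological homomorphisms) produces a natural algebraic isomorphism $\overline{H}^{\,p}_{c}(X,\mathcal{O}_{X})\cong(\overline{H}^{\,n-p}(X,\Omega^{n}_{X}))'$, and reflexivity together with the open-mapping and closed-range facts upgrades it to a topological isomorphism exhibiting the two separated spaces as a topological dual pair.

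The serious point, and the step I expect to be the main obstacle, is the passage to associated separated spaces: in general $\bar{\partial}$ is not a topological homomorphism, the naive cohomology is non-Hausdorff, and the algebraic duality lemma does not apply verbatim. The crux is to show that replacing images by their closures is compatible on the two dual sides — that $\overline{\mathrm{im}}$ on $A^{\bullet}$ corresponds under the integration pairing to $\overline{\mathrm{im}}$ on $B^{\bullet}$ — which is exactly where the equivalence ``topological homomorphism iff transpose is a topological homomorphism'' and the reflexivity of FS/DFS spaces do the essential work; the remainder is bookkeeping with the pairing and Stokes' theorem.
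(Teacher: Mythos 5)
The paper offers no proof of this theorem at all---it is quoted directly from B\u anic\u a--St\u an\u a\c sil\u a \cite[Chapter VII, Theorems 4.1, 4.2]{BanStan}---and your sketch reconstructs precisely the standard Serre--Malgrange argument behind that citation: the FS Dolbeault complex $\Gamma(X,\mathcal{A}^{n,\bullet}_{X})$ in topological duality with the DFS complex of compactly supported currents $\Gamma_{c}(X,\mathcal{D}^{0,\bullet}_{X})$, the closed-range/transpose equivalence for FS--DFS maps giving statement (2) with the correct index shift, and duality of the reduced (separated) cohomologies, via bipolar/Hahn--Banach arguments where closures of images correspond under transposition, giving statement (1); so the proposal is correct and takes essentially the same route as the cited source. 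One small repair: the identity you invoke, $\Gamma_{c}(X,\mathcal{A}^{p,q}_{X})=(\Gamma(X,\mathcal{A}^{n-p,n-q}_{X}))'$, is a typo in the paper (the strong dual of the FS space of smooth forms is the space of compactly supported currents, $\Gamma_{c}(X,\mathcal{D}^{p,q}_{X})=(\Gamma(X,\mathcal{A}^{n-p,n-q}_{X}))'$, as item (5) of the same example records), but since your complex $B^{\bullet}$ is the current-valued one this does not affect your argument---and for currents the Stokes step is simply the definition of $\bar{\partial}$ by transposition, literal Stokes' theorem holding on the dense subspace of smooth compactly supported forms.
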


\subsection{The nuclear spaces and topological K\"unnet formula}

Now we recall the topological tensor product and the K\"unnet formula. For more details about the topological tensor product see \cite{Dem, Pietsch, Schaefer}, about the K\"unnet theorem for topological tensor product see \cite{Kaup} or \cite[Chapter IX]{Dem}. 

Let $E, F$ be locally convex topological vector space. The topological tensor product $E\widehat{\otimes}_{\pi}F$ (resp. $E\widehat{\otimes}_{\varepsilon}F$) is the Hausdorff completion $E\otimes F$, equipped with the family of semi-norms $p\widehat{\otimes}_{\pi}q$ (resp. $p\widehat{\otimes}_{\varepsilon}q$) associated to fundamental families of semi-norms on $E$ and $F$ which is defined by formulas: 

$$p\widehat{\otimes}_{\pi}q(t)=\inf\big\{\sum\limits_{1\leq j\leq N}p(x_{j})q(y_{j})\mid t=\sum\limits_{1\leq j\leq N}x_{j}\otimes y_{j}, x_{j}\in E, y_{j}\in F\big\},$$
$$p\widehat{\otimes}_{\varepsilon}q(t)=\sup\big\{\mid l\otimes m(t)\mid\mid l\in E', m\in F', \parallel l\parallel_{p}\leq 1, \parallel m\parallel_{q}\leq 1\big\}.$$

A morphism $f\colon E\to F$ of complete locally convex spaces is said to be nuclear if $f$ can be written as $f(x)=\sum \lambda_{j}\varepsilon_{j}(x)y_j$, where $\{\lambda_j\}$ is a sequence of scalar with $\sum \mid\lambda_j\mid<\infty$, $\varepsilon_j\in E'$ an equicontinuous sequence
of linear forms and $y_{j}\in F$ a bounded sequence. 

A locally convex space $E$ is called nuclear if every continuous linear map of $E$ into any Banach space is nuclear.

If $E$ is an FS space and $\{p_j\}$ is an increasing sequence of semi-norms on $E$ defining the topology of $E$, we have $E=\varprojlim \widehat{E}_{p_j}$, where $\widehat{E}_{p_j}$ is the Hausdorff completion of $(E,p_{j})$ and $\widehat{E}_{p_{j+1}}\to \widehat{E}_{p_j}$ the canonical morphism. 

An FS space $E$ is nuclear if and only if the topology of $E$ can be defined by an increasing sequence of semi-norms $p_j$ such that each canonical morphism $\widehat{E}_{p_{j+1}}\to \widehat{E}_{p_j}$ of Banach spaces is nuclear. 

Now we list some facts about nuclear spaces.

\begin{itemize}
\item If $E$ or $F$ is nuclear, we obtain an isomorphism $E\widehat{\otimes}_{\pi}F\cong E\widehat{\otimes}_{\varepsilon}F$;
\item If $E,F$ are nuclear spaces, then $E\widehat{\otimes}F$ is nuclear;
\item Every subspace and every separated quotient space of a nuclear space is nuclear;
\item The product of an arbitrary family of nuclear spaces is nuclear;
\item The locally convex direct sum of a countable family of nuclear spaces is a nuclear space;
\item The projective limit of any family of nuclear spaces, and the inductive limit of a countable family of nuclear spaces, are nuclear;
\item Let $0\to E_1\to E_2\to E_3\to 0$ be an exact sequence of topological homomorphisms between Fr\'echet spaces and let $F$ be a Fr\'echet space. If $E_2$ or $F$ is nuclear, there is an exact sequence of topological homomorphisms $$0\to E_1\widehat{\otimes}F\to E_2\widehat{\otimes}F\to E_3\widehat{\otimes}F\to 0;$$
\end{itemize}

\begin{example}\label{remntensor}
\begin{enumerate}
\item For discrete spaces $I,J$ there is an isometry $$l^{1}(I)\widehat{\otimes}_{\pi}l^{1}(J)\cong l^{1}(I\times J);$$
\item If $X, Y$ are compact topological spaces and $C(X), C(Y)$ are their algebras of continuous functions with sup-norms, then $C(X)\widehat{\otimes}_{\varepsilon}C(Y)\cong C(X\times Y)$;
\item Let $D$ be a polydisk in $\mathbb{C}^{n}$. Then $\mathcal{O}(D)$ is a nuclear FS space. 
\item If $\mathcal{F}$ is a coherent analytic sheaf on a complex analytic variety $X$, then $\mathcal{F}(X)$ is a nuclear space. 
\item Let $\mathcal{F}, \mathcal{G}$ be coherent analytic sheaves on complex analytic varieties $X,Y$ respectively. Then there is a canonical topological isomorphism $$\mathcal{F}\boxtimes\mathcal{G}(X\times Y)\cong \mathcal{F}(X)\widehat{\otimes}\mathcal{G}(Y)$$ where $\mathcal{F}\boxtimes\mathcal{G}=p_{1}^{*}\mathcal{F}\otimes_{\mathcal{O}_{X\times Y}}p_{2}^{*}\mathcal{G}$ is the analytic external tensor product of $\mathcal{F}$ and $\mathcal{G}$. In particular, $$\mathcal{O}_{X\times Y}(X\times Y)\cong \mathcal{O}_{X}(X)\widehat{\otimes}\mathcal{O}_{Y}(Y).$$
\item Let $\mathcal{F}$ be a coherent analytic sheaf on a complex analytic variety $X$. If the spaces $H^{\bullet}(X,\mathcal{F})$, $H^{\bullet}_{c}(X,\mathcal{F})$ are separated, then they are nuclear spaces. 
\item Let $X=Y\times F$ be the product of complex manifolds, $\mathcal{F}$ be a coherent analytic sheaf on $X$, $Z\subset F$ be a closed complex submanifold, $Z_{y}=Z\times \{y\}$. If the space $H^{\bullet}_{c}(Z_y,\mathcal{F}\mid_{Z_y})$ is separated, then it is a nuclear space. 
\end{enumerate}
\end{example}

We need the following proposition.

\begin{proposition}\cite[Chapter IX, Theorem 5.23]{Dem}, \cite[Theorem I]{Kaup}\label{Kunnet}
Let $\mathcal{F},\mathcal{G}$ be coherent analytic sheaves over complex analytic varieties and suppose that the cohomology spaces $H^{\bullet}(X,\mathcal{F})$ and $H^{\bullet}(Y,\mathcal{G})$ are separated spaces. Then there is a topological isomorphism $$\bigoplus\limits_{p+q=n}H^{p}(X,\mathcal{F})\widehat{\otimes}H^{q}(Y,\mathcal{G})\cong H^{n}(X\times Y,\mathcal{F}\boxtimes\mathcal{G}).$$
\end{proposition}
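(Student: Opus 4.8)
The plan is to compute all three cohomology groups by \v{C}ech complexes attached to Stein coverings, to identify the \v{C}ech complex of $X\times Y$ with a completed tensor product of the \v{C}ech complexes of $X$ and $Y$, and then to prove a purely topological K\"unnet theorem for complexes of nuclear Fr\'echet spaces. First I would fix countable Stein open coverings $\mathfrak{U}=\{U_i\}$ of $X$ and $\mathfrak{V}=\{V_j\}$ of $Y$. Since a product of Stein opens is Stein, $\mathfrak{W}=\{U_i\times V_j\}$ is a countable Stein open covering of $X\times Y$, so by Leray the complexes $A^{\bullet}:=C^{\bullet}(\mathfrak{U},\mathcal{F})$, $B^{\bullet}:=C^{\bullet}(\mathfrak{V},\mathcal{G})$ and $C^{\bullet}(\mathfrak{W},\mathcal{F}\boxtimes\mathcal{G})$ compute $H^{\bullet}(X,\mathcal{F})$, $H^{\bullet}(Y,\mathcal{G})$ and $H^{\bullet}(X\times Y,\mathcal{F}\boxtimes\mathcal{G})$, each carrying its natural FS-structure (a countable product of the nuclear FS section spaces over Stein opens, hence nuclear and Fr\'echet).

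On a single product simplex, Example \ref{remntensor}(5) gives the topological identification $\Gamma(U_s\times V_t,\mathcal{F}\boxtimes\mathcal{G})\cong\Gamma(U_s,\mathcal{F})\widehat{\otimes}\Gamma(V_t,\mathcal{G})$, using that $\bigcap_k(U_{i_k}\times V_{j_k})=(\bigcap_k U_{i_k})\times(\bigcap_k V_{j_k})$. Since the completed tensor product commutes with countable products of nuclear Fr\'echet spaces, an Eilenberg--Zilber type (shuffle) map identifies $C^{\bullet}(\mathfrak{W},\mathcal{F}\boxtimes\mathcal{G})$ with the total complex of the double complex $A^{p}\widehat{\otimes}B^{q}$ up to a topological quasi-isomorphism (the shuffle map being a continuous algebraic quasi-isomorphism between FS complexes, it is topological by the Section 3 facts).

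The remaining and hardest step is the K\"unnet formula at the level of complexes: if $A^{\bullet},B^{\bullet}$ are FS complexes of nuclear spaces with separated cohomology, the cross-product map $\bigoplus_{p+q=n}H^{p}(A^{\bullet})\widehat{\otimes}H^{q}(B^{\bullet})\to H^{n}(A^{\bullet}\widehat{\otimes}B^{\bullet})$ should be an isomorphism. Separatedness is exactly the hypothesis making each differential a topological homomorphism (Section 3 criterion), so the cocycles $Z^{p}$ and coboundaries $\delta A^{p-1}$ are closed, hence FS and nuclear, and I obtain short exact sequences of topological homomorphisms
\[
0\to Z^{p}\to A^{p}\to \delta A^{p}\to 0,\qquad 0\to \delta A^{p-1}\to Z^{p}\to H^{p}(A^{\bullet})\to 0.
\]
Applying the exactness of $(-)\widehat{\otimes}E$ for nuclear Fr\'echet $E$ recalled in Section 3, tensoring these sequences with each $B^{q}$, and then with $H^{q}(B^{\bullet})$, preserves exactness; this flatness is precisely what lets $\widehat{\otimes}$ commute with passage to cohomology, so computing the double complex by first taking $p$-cohomology yields $E_{1}^{p,q}=H^{p}(A^{\bullet})\widehat{\otimes}B^{q}$ and then $E_{2}^{p,q}=H^{p}(A^{\bullet})\widehat{\otimes}H^{q}(B^{\bullet})$, while flatness forces the vanishing of the analytic analogue of the $\mathrm{Tor}$-terms, so the spectral sequence degenerates and the cross-product map is an algebraic isomorphism.

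Finally, the left-hand side is a finite direct sum of completed tensor products of nuclear Fr\'echet spaces, hence itself nuclear Fr\'echet and in particular separated; transporting this across the algebraic isomorphism shows $H^{n}(X\times Y,\mathcal{F}\boxtimes\mathcal{G})$ is separated, so it is FS, and a continuous linear bijection between FS spaces is a topological isomorphism by the open mapping property recalled in Section 3. Combined with the topological quasi-isomorphisms of the first two steps, this yields the asserted topological K\"unnet isomorphism. The main obstacle is the commutation of $\widehat{\otimes}$ with cohomology in the third step: this is exactly where both hypotheses are indispensable, separatedness to force closed images and topological homomorphisms, and nuclearity to make $\widehat{\otimes}$ flat and to avoid the well-known pathologies of the completed tensor product outside the nuclear category.
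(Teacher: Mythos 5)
The paper gives no proof of Proposition \ref{Kunnet} at all: it is quoted verbatim from Demailly \cite[Ch.~IX, Thm.~5.23]{Dem} and Kaup \cite[Thm.~I]{Kaup}, so the only comparison available is with those sources, and your outline is essentially a reconstruction of their argument (Leray with a product Stein covering, identification of the \v Cech complex of $\mathfrak{W}$ with the total complex of $C^{p}(\mathfrak{U},\mathcal{F})\widehat{\otimes}C^{q}(\mathfrak{V},\mathcal{G})$, then an abstract K\"unneth theorem for nuclear Fr\'echet complexes based on the exactness of $\widehat{\otimes}$). Steps one and two are sound modulo citable facts (commutation of $\widehat{\otimes}$ with countable products of nuclear Fr\'echet spaces, continuity of the Eilenberg--Zilber maps), and your reduction of the ``flatness'' to the Section~3 exact-sequence fact for nuclear Fr\'echet spaces is exactly the right mechanism.

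However, the last step contains a genuine logical error, and it sits precisely where the real work of the theorem lies. You claim that the left-hand side being nuclear Fr\'echet lets you ``transport separatedness across the algebraic isomorphism'' to conclude that $H^{n}(X\times Y,\mathcal{F}\boxtimes\mathcal{G})$ is separated. That inference is invalid: separatedness of $H^{n}(\mathrm{Tot})$ means that the image of the total differential $d^{n-1}_{\mathrm{tot}}$ is closed in the specific FS topology of the total complex (equivalently, by the Section~3 criterion, that $d^{n-1}_{\mathrm{tot}}$ is a topological homomorphism), and this cannot follow from a purely algebraic isomorphism with a separated space --- every vector space is algebraically isomorphic to a separated topological vector space. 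Since your final appeal to the open mapping property requires both sides to be FS, i.e.\ requires the target quotient to be separated, the conclusion collapses without an independent proof of closedness of $\mathrm{im}\,d_{\mathrm{tot}}$. In Kaup's and Demailly's proofs this Hausdorffness is established directly, by tensoring the topologically exact sequences $0\to Z^{p}\to A^{p}\to \delta A^{p+1}\to 0$ and $0\to \delta A^{p}\to Z^{p}\to H^{p}(A^{\bullet})\to 0$ with $B^{\bullet}$ and analyzing the resulting short exact sequence of total complexes, in which all maps are topological homomorphisms and the outer complexes have zero differential in the $A$-direction; the closedness of the image of $d_{\mathrm{tot}}$ comes out of that analysis, not as an afterthought. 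A secondary, fixable imprecision: ``flatness forces vanishing of the Tor-terms, so the spectral sequence degenerates'' is not an argument --- vanishing of $E_{2}$-correction terms and vanishing of the differentials $d_{r}$, $r\geq 2$, are different assertions. The correct mechanism is that right-exactness of $\widehat{\otimes}$ makes $Z^{p}(A^{\bullet})\widehat{\otimes}Z^{q}(B^{\bullet})\to H^{p}(A^{\bullet})\widehat{\otimes}H^{q}(B^{\bullet})$ surjective, so every $E_{2}$-class is represented by a total cocycle, on which all higher differentials vanish; and even then degeneration only identifies the associated graded of $H^{n}(\mathrm{Tot})$, so you must check that the cross-product map is filtration-compatible to recover the direct-sum statement.
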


\subsection{On separatedness of the space $H^{\bullet}_{c}(F_{y},\mathcal{O}_{X}\mid_{F_{y}})$}

Now let $X=Y\times F$ be the product of complex manifolds with countable basis, $Y$ be a Stein manifold. Let $\Omega^{p}$ be the sheaf of holomorphic $p$-form. 

\begin{lemma}
There exists an isomorphism $\Omega_{X}^{n}\cong\bigoplus\limits_{p+q=n}\Omega^{p}_{Y}\boxtimes\Omega^{q}_{F}$ as Fr\'echet-Schwartz sheaves. 
\end{lemma}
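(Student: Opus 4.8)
The plan is to establish the decomposition $\Omega_X^n \cong \bigoplus_{p+q=n} \Omega_Y^p \boxtimes \Omega_F^q$ by reducing a statement about holomorphic forms on a product to the well-known splitting of the cotangent bundle of a product manifold, and then upgrade the resulting algebraic sheaf isomorphism to an isomorphism of Fréchet--Schwartz sheaves by checking that the identification is continuous on sections over Stein opens. Let me sketch both halves.

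\emph{The algebraic isomorphism.} For the product $X = Y \times F$ with projections $p_1 \colon X \to Y$ and $p_2 \colon X \to F$, I would start from the canonical splitting of the holomorphic cotangent bundle $\Omega_X^1 \cong p_1^* \Omega_Y^1 \oplus p_2^* \Omega_F^1$, which holds because $T_X \cong p_1^* T_Y \oplus p_2^* T_F$ for a product. Taking the $n$-th exterior power and using the standard formula $\bigwedge^n(\mathcal{E} \oplus \mathcal{F}) \cong \bigoplus_{p+q=n} \bigwedge^p \mathcal{E} \otimes \bigwedge^q \mathcal{F}$ for locally free sheaves gives
\[
\Omega_X^n \cong \bigwedge^n\!\big(p_1^*\Omega_Y^1 \oplus p_2^*\Omega_F^1\big) \cong \bigoplus_{p+q=n} p_1^*\Omega_Y^p \otimes_{\mathcal{O}_X} p_2^*\Omega_F^q = \bigoplus_{p+q=n} \Omega_Y^p \boxtimes \Omega_F^q,
\]
where the last equality is just the definition of the external tensor product $\boxtimes$ recalled in Example \ref{remntensor}. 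Concretely, in local holomorphic coordinates $z = (z_1,\dots,z_m)$ on $Y$ and $w = (w_1,\dots,w_k)$ on $F$, an $n$-form on $X$ is uniquely a sum $\sum_{|I|+|J|=n} a_{IJ}(z,w)\, dz_I \wedge dw_J$, and grouping by $(|I|,|J|) = (p,q)$ realizes the direct sum decomposition explicitly; this makes the splitting canonical and coordinate-independent.

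\emph{Upgrading to Fréchet--Schwartz sheaves.} It remains to verify that the isomorphism above respects the topological structure, i.e.\ that for every open $U \subset X$ the induced map on sections $\Gamma(U, \Omega_X^n) \to \bigoplus_{p+q=n} \Gamma(U, \Omega_Y^p \boxtimes \Omega_F^q)$ is a topological isomorphism of FS spaces, compatibly with restrictions. Since all the sheaves involved are coherent analytic sheaves on the complex manifold $X$ with countable basis, each $\Gamma(U,-)$ carries its canonical FS topology, and any morphism of coherent sheaves induces continuous maps on sections (as recalled before Remark \ref{propDFSstein}); the inverse is likewise a morphism of coherent sheaves, hence also continuous. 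Thus the algebraic isomorphism is automatically bicontinuous, and compatibility with restriction maps $\Gamma(U,-) \to \Gamma(V,-)$ for $V \subset U$ is clear because the decomposition is defined functorially (via the cotangent splitting) and commutes with restriction. This gives the claimed isomorphism of FS sheaves.

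\emph{The main obstacle.} The exterior-power computation is routine linear algebra for locally free sheaves, so the genuine content is purely bookkeeping to confirm that the canonical identification $\Omega_Y^p \boxtimes \Omega_F^q = p_1^*\Omega_Y^p \otimes p_2^* \Omega_F^q$ matches the convention used elsewhere in the paper, and that no subtlety arises from the infinite-dimensionality of the section spaces. Here the only point requiring care is that the FS-topological statement is not merely that the map is continuous but that it is a \emph{topological} isomorphism; this is guaranteed because a continuous bijective morphism between the section spaces of coherent sheaves on a complex manifold with countable basis is automatically open (the spaces are FS, and a surjective continuous linear map between FS spaces is open, as listed among the FS/DFS properties). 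Hence I expect no real difficulty, and the lemma follows once the local coordinate description is recorded.
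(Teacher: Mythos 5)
Your proof is correct, and its algebraic half is exactly the paper's: both start from the splitting $\Omega_{X}^{1}\cong p_{1}^{*}\Omega^{1}_{Y}\oplus p_{2}^{*}\Omega^{1}_{F}$ and take $n$-th exterior powers to obtain $\Omega^{n}_{X}\cong\bigoplus_{p+q=n}p_{1}^{*}\Omega^{p}_{Y}\otimes p_{2}^{*}\Omega^{q}_{F}=\bigoplus_{p+q=n}\Omega^{p}_{Y}\boxtimes\Omega^{q}_{F}$ as coherent sheaves. Where you genuinely diverge is in the topological upgrade. The paper proceeds by an explicit local computation: on a product open $U\times V$ it writes $\Omega_{X}^{n}(U\times V)\cong(\mathcal{O}_{X}(U\times V))^{\oplus d_{X,n}}\cong(\mathcal{O}_{Y}(U)\widehat{\otimes}\mathcal{O}_{F}(V))^{\oplus d_{X,n}}$, uses the rank identity $d_{X,n}=\sum_{p+q=n}d_{Y,p}d_{F,q}$ to regroup the summands into $\bigoplus_{p+q=n}\Omega^{p}_{Y}(U)\widehat{\otimes}\Omega^{q}_{F}(V)$, and then passes to arbitrary opens $W\subset X$. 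You instead invoke the soft principle, recorded in the paper before Remark \ref{propDFSstein}, that every morphism of coherent analytic sheaves induces continuous maps on section spaces; since the inverse of your sheaf isomorphism is again such a morphism (or, alternatively, by the open mapping theorem for FS spaces), the algebraic isomorphism is automatically a topological one, compatibly with restrictions. Both arguments are sound. Yours is shorter and more general---it shows that \emph{any} algebraic isomorphism of coherent sheaves on a manifold with countable basis is an isomorphism of Fr\'echet--Schwartz sheaves---whereas the paper's computation has the side benefit of exhibiting the explicit section-level model $\Omega_{X}^{n}(U\times V)\cong\bigoplus_{p+q=n}\Omega^{p}_{Y}(U)\widehat{\otimes}\Omega^{q}_{F}(V)$, which is the concrete topological K\"unneth identification that foreshadows the application of Proposition \ref{Kunnet} in Proposition \ref{lemmaseparated}.
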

\begin{proof}
Let $p_{1}\colon X\to Y$ and $p_{2}\colon X\to F$ be canonical projections. We have the following canonical  algebraic isomorphisms:
\begin{multline*}
\Omega^{n}_{X}=\wedge^{n}\Omega^{1}_{X}=\wedge^{n}(p_{1}^{*}\Omega^{1}_{Y}\oplus p_{2}^{*}\Omega^{1}_{F})\cong \\\cong \bigoplus\limits_{p+q=n}\wedge^{p}(p_{1}^{*}\Omega^{1}_{Y})\otimes \wedge^{q} (p_{2}^{*}\Omega^{1}_{F})\cong \bigoplus\limits_{p+q=n}p_{1}^{*}\Omega^{p}_{Y}\otimes p_{2}^{*}\Omega^{q}_{F}=\bigoplus\limits_{p+q=n}\Omega^{p}_{Y}\boxtimes\Omega^{q}_{F}.
\end{multline*} 

Let $d_{X, n}=\dim (\bigwedge^{n}(\mathbb{C}^{\dim X}))$ , $d_{Y,p}=\dim (\bigwedge^{p}(\mathbb{C}^{\dim Y}))$, $d_{F,q}=\dim (\bigwedge^{q}(\mathbb{C}^{\dim F}))$. Note that $d_{X,n}=\sum\limits_{p+q=n}d_{Y,p}d_{F,q}$.

Locally, for an open set $U\times V\subset X$ we have the topological isomorphisms: 
\begin{multline*}
\Omega_{X}^{n}(U\times V)\cong(\mathcal{O}_{X}(U\times V))^{\oplus d_{X, n}}\cong (\mathcal{O}_{Y}(U)\widehat{\otimes}\mathcal{O}_{F}(V))^{\oplus d_{X, n}} \cong \\\cong \bigoplus\limits_{p+q=n} (\mathcal{O}_{Y}(U))^{\oplus d_{Y, p}}\widehat{\otimes}(\mathcal{O}_{F}(V))^{\oplus d_{F, q}}\cong
\bigoplus\limits_{p+q=n}\Omega^{p}_{Y}(U)\widehat{\otimes}\Omega^{q}_{F}(V).
\end{multline*}

It follows that for any open set $W\subset X$ we have topological isomorphism $$\Omega_{X}^{n}(W)\cong
\bigoplus\limits_{p+q=n}(\Omega^{p}_{Y}\boxtimes\Omega^{q}_{F})(W).$$ It concludes the proof of Lemma.

\end{proof}

\begin{proposition}\label{lemmaseparated}
Let $y\in Y$ be any point. If for every $k,l$ the spaces $H^{k}(F,\Omega^{l}_{F})$ are separated, then 
\begin{enumerate}
\item for any $i>0$ the spaces $H^{i}_{c}(Y\setminus\{y\}\times F,\mathcal{O}_{X})$, $H^{i}_{c}(Y\times F,\mathcal{O}_{X})$ are separated;
\item for any $i>0$ the canonical map $H^{i}_{c}(Y\setminus\{y\}\times F,\mathcal{O}_{X})\to H^{i}_{c}(Y\times F,\mathcal{O}_{X})$ is surjective. 
\end{enumerate}
\end{proposition}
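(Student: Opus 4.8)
The plan is to transfer everything from compactly-supported cohomology of $\mathcal{O}_X$ to ordinary cohomology of $\Omega^n_X$ by the Serre--Malgrange duality (Theorem \ref{duality}), and then to split the latter by the K\"unneth formula (Proposition \ref{Kunnet}) through the decomposition $\Omega^n_X\cong\bigoplus_{p+q=n}\Omega^p_Y\boxtimes\Omega^q_F$ of the preceding Lemma; here $n=\dim X$ and I write $m=\dim Y$. First I would prove the $Y\times F$ case of (1). By Theorem \ref{duality}(2) it suffices to show that $H^k(Y\times F,\Omega^n_X)$ is separated for every $k$. Using the Lemma this group is $\bigoplus_{p+q=n}H^k(Y\times F,\Omega^p_Y\boxtimes\Omega^q_F)$. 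Since $Y$ is Stein, $H^a(Y,\Omega^p_Y)=0$ for $a>0$ while $H^0(Y,\Omega^p_Y)=\Omega^p_Y(Y)$ is FS, hence separated; by hypothesis each $H^b(F,\Omega^q_F)$ is separated. Thus Proposition \ref{Kunnet} gives a topological isomorphism $H^k(Y\times F,\Omega^p_Y\boxtimes\Omega^q_F)\cong\Omega^p_Y(Y)\widehat{\otimes}H^k(F,\Omega^q_F)$, only the term $a=0$ surviving, and a completed tensor product of separated nuclear spaces (Example \ref{remntensor}) is separated.

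The same scheme handles $(Y\setminus\{y\})\times F$ once one knows that $H^a(Y\setminus\{y\},\Omega^p_Y)$ is separated for all $a,p$; then Proposition \ref{Kunnet} again yields separatedness of $H^k((Y\setminus\{y\})\times F,\Omega^n_X)$ and Theorem \ref{duality}(2) concludes. If $m=1$, then $Y\setminus\{y\}$ is an open, hence Stein, Riemann surface and the claim is immediate. If $m\geq2$, I would use the local-cohomology (excision) sequence together with $H^{>0}(Y,\Omega^p_Y)=0$ to obtain $H^0(Y\setminus\{y\},\Omega^p_Y)\cong\Omega^p_Y(Y)$ (Riemann extension, an FS space) and $H^a(Y\setminus\{y\},\Omega^p_Y)\cong H^{a+1}_{\{y\}}(Y,\Omega^p_Y)$ for $a\geq1$. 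As $\Omega^p_Y$ is locally free, its local cohomology at the point is concentrated in degree $m$, so the only remaining group is $H^{m-1}(Y\setminus\{y\},\Omega^p_Y)\cong H^{m}_{\{y\}}(Y,\Omega^p_Y)$; trivializing $\Omega^p_Y$ on a polydisc $D$ around $y$ turns this into a finite sum of copies of $H^{m-1}(D\setminus\{y\},\mathcal{O})$, which is separated, being computed by the convergent Laurent-tail description on the $m$ coordinate charts and being a nuclear space (equivalently, the strong dual of the stalk $\mathcal{O}_{Y,y}$, a DFS space).

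For the surjectivity in (2), part (1) now guarantees that all spaces involved are separated, so by Theorem \ref{duality}(1) the pairs $H^i_c(U,\mathcal{O}_X),H^{n-i}(U,\Omega^n_X)$ and $H^i_c(X,\mathcal{O}_X),H^{n-i}(X,\Omega^n_X)$ are genuinely in topological duality, where $U=(Y\setminus\{y\})\times F$ and $X=Y\times F$. Under this duality the canonical extension-by-zero map $\rho\colon H^i_c(U,\mathcal{O}_X)\to H^i_c(X,\mathcal{O}_X)$ is the transpose of the restriction $\mathrm{res}\colon H^{n-i}(X,\Omega^n_X)\to H^{n-i}(U,\Omega^n_X)$, so by the closed-range facts for FS/DFS spaces listed in Section 3, $\rho$ is surjective as soon as $\mathrm{res}$ is injective with closed image. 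Decomposing $\mathrm{res}$ by the Lemma and Proposition \ref{Kunnet}, it is the direct sum over $p$ of $\iota_p\widehat{\otimes}\mathrm{id}$, where $\iota_p\colon\Omega^p_Y(Y)\to H^0(Y\setminus\{y\},\Omega^p_Y)$ is restriction of forms, landing exactly in the closed summand $a=0$ of the K\"unneth decomposition of $H^{n-i}(U,\Omega^n_X)$. For $m\geq2$ the map $\iota_p$ is an isomorphism and for $m=1$ it is injective with closed image (extendability across $y$ is a closed condition); completing the tensor product with a nuclear Fr\'echet factor preserves injectivity and closed range, so $\mathrm{res}$ is a topological homomorphism onto a closed subspace, whence $\rho$ is surjective.

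The main obstacle is the local point in the second step: establishing that the single critical-degree group $H^{m}_{\{y\}}(Y,\Omega^p_Y)\cong H^{m-1}(D\setminus\{y\},\mathcal{O})$ carries a \emph{separated} natural topology, since this is the top cohomology of a non-Stein, merely $(m-1)$-complete manifold where closedness of the coboundaries is precisely what must be checked. A secondary delicate point is the functional-analytic bookkeeping in the third step, namely verifying that the transpose of $\rho$ is indeed $\mathrm{res}$ and that each $\iota_p\widehat{\otimes}\mathrm{id}$ remains a topological homomorphism with closed image, which rests on the nuclearity of the factors and the exactness of $\widehat{\otimes}$ for Fr\'echet spaces recorded in Section 3.
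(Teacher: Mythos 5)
Your proposal follows essentially the same route as the paper's proof: transfer to $H^{\bullet}(\cdot,\Omega^{n}_{X})$ via Theorem \ref{duality}, split $\Omega^{n}_{X}\cong\bigoplus_{p+q=n}\Omega^{p}_{Y}\boxtimes\Omega^{q}_{F}$ and apply Proposition \ref{Kunnet} to get separatedness, and deduce surjectivity in (2) by showing that the restriction $H^{n-i}(Y\times F,\Omega^{n}_{X})\to H^{n-i}((Y\setminus\{y\})\times F,\Omega^{n}_{X})$ is an injective topological homomorphism and dualizing. The only real difference is that you justify, via local cohomology and the Laurent-tail description of $H^{m-1}(D\setminus\{y\},\mathcal{O})$, the separatedness of $H^{\bullet}(Y\setminus\{y\},\Omega^{p}_{Y})$, a fact the paper asserts without proof as a consequence of $Y$ being Stein.
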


\begin{proof}
Since $Y$ is a Stein space, it follows that $H^{i}(Y,\Omega^{p}_{Y})=0$ for all $i>0$ and for all $p$, $\Omega^{p} (Y)$ is separated for all $p$ and $H^{j}(Y\setminus\{y\},\Omega^{q}_{Y})$ is separated for all $j\geq 0$ and for all $q$. 

By the K\"unnet formula (Proposition \ref{Kunnet}) we obtain the following topological isomorphisms:
 
 $$\Omega^{p}(Y)\widehat{\otimes}H^{n-i}(F,\Omega^{q}_{F})\cong H^{n-i}(Y\times F,\Omega^{p}_{Y}\boxtimes\Omega^{q}_{F});$$
$$\bigoplus\limits_{k+l=n-i}H^{k}(Y\setminus\{y\},\Omega^{p}_{Y})\widehat{\otimes}H^{l}(F,\Omega^{q}_{F})\cong H^{n-i}(Y\setminus\{y\}\times F,\Omega^{p}_{Y}\boxtimes\Omega^{q}_{F}).$$

It follows that for any $i\geq 0$ the spaces $$H^{n-i}(Y\setminus\{y\}\times F,\Omega^{n}_{X}), H^{n-i}(Y\times F,\Omega^{n}_{X})$$ are separated. By the duality theorem \ref{duality} we have that the spaces $$H^{i}_{c}(Y\setminus\{y\}\times F,\mathcal{O}_{X}), H^{i}_{c}(Y\times F,\mathcal{O}_{X})$$ are separated. Moreover, we obtain the topological isomorphisms: 

$$H^{i}_{c}(Y\setminus\{y\}\times F,\mathcal{O}_{X})\cong (H^{n-i}(Y\setminus\{y\}\times F,\Omega^{n}_{X}))',$$

$$H^{i}_{c}(Y\times F,\mathcal{O}_{X})\cong (H^{n-i}(Y\times F,\Omega^{n}_{X}))',$$

Since $\Omega^{p}(Y)$ is a closed subspace of a nuclear space $\Omega^{p}(Y\setminus \{y\})$, it follows that we have the injective topological homomorphism $$0\to \Omega^{p}(Y)\widehat{\otimes}H^{n-i}(F,\Omega^{q}_{F})\to \Omega^{p}(Y\setminus \{y\})\widehat{\otimes}H^{n-i}(F,\Omega^{q}_{F}).$$

So, the canonical map $$H^{n-i}(Y\times F,\Omega^{n}_{X})\to H^{n-i}(Y\setminus\{y\}\times F,\Omega^{n}_{X})$$ is injective topological homomorphism for all $i$. It follows that the dual map is surjective. This means that for any $i>0$ the canonical map $$H^{i}_{c}(Y\setminus\{y\}\times F,\mathcal{O}_{X})\to H^{i}_{c}(Y\times F,\mathcal{O}_{X})$$ is surjective.
\end{proof}

In particular we have the following

\begin{corollary}\label{corolseparated}
Let $F_{y}=\{y\}\times F$. If for every $k,l$ the spaces $H^{k}(F,\Omega^{l}_{F})$ are separated, then $H^{i}_{c}(F_{y},\mathcal{O}_{X}\mid_{F_{y}})$ are separated spaces for all $i$. 
\end{corollary}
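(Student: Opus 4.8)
The plan is to relate the compactly supported cohomology of the fiber to that of the total space $X=Y\times F$ and of the open complement $U:=(Y\setminus\{y\})\times F$ by means of the long exact sequence of a closed pair, and then to feed in both conclusions of Proposition \ref{lemmaseparated}.

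First I would set up the sequence. Since $\{y\}$ is closed in $Y$, the fiber $F_{y}$ is a closed submanifold of $X$ with open complement $U$; writing $j\colon U\hookrightarrow X$ and $i\colon F_{y}\hookrightarrow X$ for the inclusions, the short exact sequence of sheaves on $X$
\[
0\to j_{!}(\mathcal{O}_{X}\mid_{U})\to \mathcal{O}_{X}\to i_{*}(\mathcal{O}_{X}\mid_{F_{y}})\to 0
\]
gives, after applying $\Gamma_{c}(X,-)$ and using the standard identifications $H^{\bullet}_{c}(X,j_{!}(\mathcal{O}_{X}\mid_{U}))\cong H^{\bullet}_{c}(U,\mathcal{O}_{X})$ and $H^{\bullet}_{c}(X,i_{*}(\mathcal{O}_{X}\mid_{F_{y}}))\cong H^{\bullet}_{c}(F_{y},\mathcal{O}_{X}\mid_{F_{y}})$, a long exact sequence whose relevant portion is
\[
H^{i}_{c}(U,\mathcal{O}_{X})\overset{a}{\to}H^{i}_{c}(X,\mathcal{O}_{X})\overset{b}{\to}H^{i}_{c}(F_{y},\mathcal{O}_{X}\mid_{F_{y}})\overset{\delta}{\to}H^{i+1}_{c}(U,\mathcal{O}_{X}).
\]
Here $a$ is exactly the canonical extension-by-zero map appearing in Proposition \ref{lemmaseparated}(2), and $\delta$ is continuous for the natural QDFS-topologies, being a connecting homomorphism of a short exact sequence of \v Cech complexes of DFS spaces.

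Next I would exploit both parts of Proposition \ref{lemmaseparated}. For $i>0$ the map $a$ is surjective, so by exactness $b=0$ and hence $\delta$ is injective on $H^{i}_{c}(F_{y},\mathcal{O}_{X}\mid_{F_{y}})$. Its target $H^{i+1}_{c}(U,\mathcal{O}_{X})$ is separated by Proposition \ref{lemmaseparated}(1). Now a continuous injective linear map into a Hausdorff topological vector space forces its domain to be Hausdorff: the closure of $\{0\}$ in the domain is a linear subspace carried by $\delta$ into the closure of $\{0\}$ in the target, which is $\{0\}$, so injectivity makes this closure trivial. Therefore $H^{i}_{c}(F_{y},\mathcal{O}_{X}\mid_{F_{y}})$ is separated for every $i>0$. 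The case $i=0$ is immediate: $H^{0}_{c}(F_{y},\mathcal{O}_{X}\mid_{F_{y}})$ is the kernel of the first differential of the \v Cech complex $C^{\bullet}_{c}(\mathfrak{K},\mathcal{O}_{X}\mid_{F_{y}})$ of DFS spaces, hence a closed subspace of a DFS space, and so is itself DFS and in particular separated.

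The whole argument rests on the surjectivity statement of Proposition \ref{lemmaseparated}(2); once that is available, the transition from ``separated on $U$ and on $X$'' to ``separated on the fiber'' is purely formal. The one point demanding care is the continuity of the connecting map $\delta$, which is what makes the elementary Hausdorff argument legitimate; note that no open-mapping theorem or topological-homomorphism criterion is needed here, since a continuous injection into a separated space already suffices to conclude separatedness of the source.
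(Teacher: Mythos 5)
Your proposal is correct and takes essentially the same route as the paper: the paper's proof also notes $H^{0}_{c}((Y\setminus\{y\})\times F,\mathcal{O}_{X})=H^{0}_{c}(Y\times F,\mathcal{O}_{X})=0$, writes the long exact sequence of the pair $(X,X\setminus F_{y})$ for cohomology with compact supports, and uses both parts of Proposition \ref{lemmaseparated} to conclude that the surjective continuous maps $f_{i}\colon H^{i}_{c}((Y\setminus\{y\})\times F,\mathcal{O}_{X})\to H^{i}_{c}(Y\times F,\mathcal{O}_{X})$ have closed kernels, which forces separatedness of $H^{i}_{c}(F_{y},\mathcal{O}_{X}\mid_{F_{y}})$. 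Your version merely makes explicit what the paper leaves implicit (injectivity of the continuous connecting map $\delta$ into a separated space, plus the elementary Hausdorff-transfer argument), and handles $i=0$ by a direct \v Cech argument rather than by the vanishing of the first two terms of the sequence.
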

\begin{proof}

Since $Y$ is a noncompact manifold, it follows that $$H_{c}^{0}(Y\setminus\{y\}\times F,\mathcal{O}_{X})=H^{0}_{c}(Y\times F,\mathcal{O}_{X})=0.$$ We have the following long exact sequence of the pair $(X,X\setminus F_{y})$ \cite[Chapter II, Section 10.3]{Bredon}:

\begin{multline}\label{pairexact}
\xymatrix@C=0.5cm{
  0  \ar[r] & H^{0}_{c}(F_{y},\mathcal{O}_{X}\mid_{F_{y}}) \ar[r] & H^{1}_{c}(Y\setminus\{y\}\times F,\mathcal{O}_{X}) \ar[r]^{f_1} & H^{1}_{c}(Y\times F,\mathcal{O}_{X}) \ar[r]  & \\ \ar[r]  & H^{1}_{c}(F_{y},\mathcal{O}_{X}\mid_{F_{y}}) \ar[r] & H^{2}_{c}(Y\setminus\{y\}\times F,\mathcal{O}_{X}) \ar[r]^{f_2}& H^{2}_{c}(Y\times F,\mathcal{O}_{X}) \ar[r] & \cdots   }
\end{multline} 

Since by Proposition \ref{lemmaseparated} we have that $f_i$ are surjective continuous maps between separated spaces, so $\ker f_i$ are closed subspaces. It follows that $H^{i}_{c}(F_{y},\mathcal{O}_{X}\mid_{F_{y}})$ are separated spaces. 

\end{proof}

\section{Density lemma for holomorphic fiber bundle and Stein fibers case}

Let $Y,F$ be complex manifolds with countable basis. Let $X=Y\times F$, $F_{y}=\{y\}\times F$, let $Z_{y}\subset F_{y}$ be a closed complex submanifold. Let $p_{1}\colon X\to Y$, $p_{2}\colon X\to F$ be canonical projections.

We have the canonical homomorphisms of sheaves $$p_{1}^{-1}\mathcal{O}_{Y}\hookrightarrow \mathcal{O}_{X}, p_{2}^{-1}\mathcal{O}_{F}\hookrightarrow\mathcal{O}_{X},$$ and we obtain the canonical injective homomorphism of sheaves $$p_{1}^{-1}\mathcal{O}_{Y}\otimes p_{2}^{-1}\mathcal{O}_{F}\hookrightarrow\mathcal{O}_{X}, f\otimes g\to fg.$$ 

For any open set of the form $U\times V\in X$ we obtain that the injective map $$\mathcal{O}_{Y}(U)\otimes \mathcal{O}_{F}(V)\hookrightarrow\mathcal{O}_{X}(U\times V)$$ has a dense image (Remark \ref{remntensor}). 

Note that we have the canonical isomorphisms of the sheaves $$(p_{1}^{-1}\mathcal{O}_{Y}\otimes p_{2}^{-1}\mathcal{O}_{F})\mid_{Z_y}\cong \mathcal{O}_{Y,y}\otimes \mathcal{O}_{F_{y}}\mid_{Z_y}$$ where $\mathcal{O}_{Y,y}$ considered as the constant sheaf on the fiber $F_y$. 

For any open subset $W\subset Z_{y}$ we obtain that the canonical injective map 
$$\mathcal{O}_{Y,y}\otimes \mathcal{O}_{F_y}\mid_{Z_y}(W)\hookrightarrow \mathcal{O}_{X}\mid_{Z_y}(W)$$ has a dense image. 

\begin{lemma}\label{Lemmaondense}
The canonical homomorphism $$\mathcal{O}_{Y, y}\otimes H^{\bullet}_{c}(Z_{y},\mathcal{O}_{F_y}\mid_{Z_y})\to H^{\bullet}_{c}(Z_y,\mathcal{O}_{X}\mid_{Z_y})$$ has a dense image with respect to the QDFS-topology on $H^{\bullet}_{c}(Z_y,\mathcal{O}_{X}\mid_{Z_y})$. 
\end{lemma}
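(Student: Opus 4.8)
The plan is to compute both cohomology groups from one and the same \v Cech complex, to establish density at the level of finite cochains (where it is transparent), and then to transport it to cohomology.

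First I would fix a locally finite covering $\mathfrak{K}$ of $Z_{y}$ by Stein compacts, as in Section 3. By the discussion preceding the lemma, the groups $H^{\bullet}_{c}(Z_{y},\mathcal{O}_{F_y}\mid_{Z_y})$ and $H^{\bullet}_{c}(Z_{y},\mathcal{O}_{X}\mid_{Z_y})$ are the cohomologies of the DFS complexes $C_{c}^{\bullet}(\mathfrak{K},\mathcal{O}_{F_y}\mid_{Z_y})$ and $C_{c}^{\bullet}(\mathfrak{K},\mathcal{O}_{X}\mid_{Z_y})$, the latter carrying the natural QDFS-topology. The sheaf inclusion $\mathcal{O}_{Y,y}\otimes\mathcal{O}_{F_y}\mid_{Z_y}\hookrightarrow\mathcal{O}_{X}\mid_{Z_y}$ induces a continuous morphism of complexes
\[ \Phi^{\bullet}\colon \mathcal{O}_{Y,y}\otimes C_{c}^{\bullet}(\mathfrak{K},\mathcal{O}_{F_y}\mid_{Z_y})\to C_{c}^{\bullet}(\mathfrak{K},\mathcal{O}_{X}\mid_{Z_y}). \]

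Since tensoring over $\mathbb{C}$ with the fixed space $\mathcal{O}_{Y,y}$ is exact and commutes with the direct sums defining the finite-cochain groups, the cohomology of the source complex is $\mathcal{O}_{Y,y}\otimes H^{\bullet}_{c}(Z_{y},\mathcal{O}_{F_y}\mid_{Z_y})$ and $H^{\bullet}(\Phi)$ is exactly the canonical homomorphism of the statement. In each degree $p$ the cochain group is a locally finite direct sum over simplices $s$ of the DFS spaces $\Gamma(K_{s},-)$. For each Stein compact $K_{s}$ the K\"unnet identification $\Gamma(K_{s},\mathcal{O}_{X}\mid_{Z_y})\cong \mathcal{O}_{Y,y}\widehat{\otimes}\Gamma(K_{s},\mathcal{O}_{F_y}\mid_{Z_y})$, obtained by passing to the inductive limit over Stein neighbourhoods $U\times V\supset K_{s}$ and using Example \ref{remntensor}, shows that $\mathcal{O}_{Y,y}\otimes\Gamma(K_{s},\mathcal{O}_{F_y}\mid_{Z_y})$ is dense in $\Gamma(K_{s},\mathcal{O}_{X}\mid_{Z_y})$; since a locally convex direct sum of dense subspaces is dense, $\Phi^{p}$ has dense image for every $p$.

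The step I expect to be the \emph{main obstacle} is the passage from density in each cochain degree to density of $H^{\bullet}(\Phi)$, because the target cohomology is only QDFS and need not be separated, so the \v Cech differentials of $C_{c}^{\bullet}(\mathfrak{K},\mathcal{O}_{X}\mid_{Z_y})$ need not be topological homomorphisms. Writing $C^{p},Z^{p},R^{p}$ for the cochains, cocycles and coboundaries of the target, density of $H^{p}(\Phi)$ is equivalent to density, inside $Z^{p}$, of $R^{p}$ together with the $\Phi$-images of the cocycles of the source complex; a naive approximation of a cocycle $z$ by $\Phi(a_{k})$ only gives $\delta\Phi(a_{k})=\Phi(\delta a_{k})\to 0$, not $\delta a_{k}=0$, so the approximants must be corrected into genuine cocycles. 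I would remove this defect by a successive-approximation (Mittag--Leffler) scheme, solving the correcting equations with control by means of the open mapping theorem for surjections of DFS spaces together with the nuclearity of the spaces $\Gamma(K_{s},-)$, so that each $a_{k}$ is replaced by a cocycle without spoiling the approximation. Equivalently, I would dualize: by the duality package of Section 3 the dual of $H^{p}(C^{\bullet})$ is the reduced cohomology of the FS complex $(C^{\bullet})'$, density of $H^{p}(\Phi)$ is equivalent to injectivity of the induced map on these reduced cohomologies, and the degreewise injectivity of $(\Phi^{p})'$ dual to the cochain density would be promoted to injectivity on reduced cohomology using the completed-tensor structure $(C^{\bullet})'\cong \mathcal{O}_{Y,y}'\widehat{\otimes}(C_{c}^{\bullet}(\mathfrak{K},\mathcal{O}_{F_y}\mid_{Z_y}))'$ and nuclearity. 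Either way, nuclearity together with the open mapping theorem for FS/DFS spaces is what makes the correction possible, and this is the technical heart of the lemma.
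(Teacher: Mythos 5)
Your setup coincides with the paper's: the same \v Cech complex $C_{c}^{\bullet}(\mathfrak{K},\mathcal{O}_{X}\mid_{Z_y})$ over a locally finite covering by Stein compacts, the same identification of the source cohomology via exactness of $\mathcal{O}_{Y,y}\otimes(-)$, and the same degreewise density statement obtained from Siu's theorem and Example \ref{remntensor}. You also correctly isolate the real difficulty: degreewise density of $\Phi^{p}$ does not by itself give density of $H^{p}(\Phi)$, since approximants of a cocycle need not be cocycles. But at exactly this point your proposal stops being a proof, and the two remedies you sketch invoke tools whose hypotheses are precisely what is unavailable here. The open mapping theorem for DFS spaces applies to surjections onto DFS spaces, so using it to ``solve the correcting equations with control'' presupposes that the coboundary spaces are closed, i.e.\ that the differentials are topological homomorphisms --- which, as you yourself note, need not hold. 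The duality route has the same defect: the identification of the dual of $\ker g/\mathrm{im}\, f$ with $\ker f'/\mathrm{im}\, g'$ listed in Section 3 of the paper is stated under the hypothesis that the differential is a topological homomorphism, and Kaup's K\"unneth theorem (Proposition \ref{Kunnet}) requires separated cohomology; neither is assumed in the lemma. So the step you call the technical heart is left unproven, and the named tools cannot prove it in this generality.

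The paper dissolves the obstacle rather than confronting it, by an elementary device absent from your proposal: write a finite cochain $f=(f_{s_0,\cdots,s_p})$ of $C_{c}^{p}(\mathfrak{K},\mathcal{O}_{X}\mid_{Z_y})$ as a power series $f=\sum_{I}f_{I}z^{I}$ in local coordinates $z$ of $Y$ centred at $y$, with coefficients $f_{I}\in C_{c}^{p}(\mathfrak{K},\mathcal{O}_{F_y}\mid_{Z_y})$; this is possible componentwise, by Siu's theorem, on neighbourhoods of the form $U\times V$. Since the \v Cech differential acts coefficientwise and power-series coefficients are unique, $\delta f=0$ forces $\delta f_{I}=0$ for every $I$. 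Consequently the partial sums $\sum_{|I|\leq N}f_{I}z^{I}$ are themselves cocycles, lie in $\mathcal{O}_{Y,y}\otimes Z_{c}^{p}(\mathfrak{K},\mathcal{O}_{F_y}\mid_{Z_y})$, and converge to $f$. Thus density holds already at the level of cocycles, and since the quotient map $Z^{p}\to H^{p}$ is a continuous surjection, density in cohomology follows immediately --- no correction scheme, no open mapping theorem, no separatedness, and no duality are needed. This coefficientwise argument is the missing step you would have to supply to complete your write-up.
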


\begin{proof}

Let $\mathfrak{K}$ be a locally finite covering of $Z_y$ by Stein compacts. Then the group cohomology $H^{\bullet}_{c}(Z_y,\mathcal{O}_{X}\mid_{Z_y})$ is obtained as cohomology of the \v Cech complex $C_{c}^{\bullet}(\mathfrak{K},\mathcal{O}_{X}\mid_{Z_{y}})$ where

$$C_{c}^{p}(\mathfrak{K},\mathcal{O}_{X}\mid_{Z_y}):=\bigoplus\limits_{s=(s_{0},\cdots,s_p)} \Gamma(K_{s_0}\cap\cdots\cap K_{s_p},\mathcal{O}_{X}\mid_{Z_y}).$$  

Now, we consider the sheaf $\mathcal{O}_{Y,y}\otimes \mathcal{O}_{F_{y}}\mid_{Z_y}$ on $Z_y$. 

By the universal coefficient theorem \cite[Theorem 15.3]{Bredon} and by Siu's theorem (see \cite[Theorem 3.1.1]{Forst}) for any Stein compact $K\in\mathfrak{K}$ we have the following:
\begin{itemize}
\item for global sections 
$$\Gamma(K,\mathcal{O}_{Y,y}\otimes \mathcal{O}_{F_{y}}\mid_{Z_{y}})\cong \mathcal{O}_{Y,y}\otimes \Gamma(K,\mathcal{O}_{F_{y}}\mid_{Z_y});$$
\item for group cohomologies $$H^{q}(K,\mathcal{O}_{Y,y}\otimes \mathcal{O}_{F_{y}}\mid_{Z_{y}})\cong \mathcal{O}_{Y,y}\otimes H^{q}(K,\mathcal{O}_{F_{y}}\mid_{Z_{y}})=0.$$
\end{itemize}

For the sheaf $\mathcal{O}_{Y,y}\otimes \mathcal{O}_{F_{y}}\mid_{Z_y}$ on $Z_y$ we have the \v Cech complex $$C_{c}^{\bullet}(\mathfrak{K},\mathcal{O}_{Y,y}\otimes \mathcal{O}_{F_{y}}\mid_{Z_y})=\mathcal{O}_{Y,y}\otimes C_{c}^{\bullet}(\mathfrak{K}, \mathcal{O}_{F_{y}}\mid_{Z_y})$$ 

The group $H^{\bullet}_{c}(F_y,\mathcal{O}_{Y,y}\otimes \mathcal{O}_{F_{y}}\mid_{Z_y})$ is obtained as cohomology of the \v Cech complex $\mathcal{O}_{Y,y}\otimes C_{c}^{\bullet}(\mathfrak{K}, \mathcal{O}_{F_{y}}\mid_{Z_y})$. Moreover, we have the canonical isomorphism 
$$H^{\bullet}_{c}(F_{y},\mathcal{O}_{Y,y}\otimes \mathcal{O}_{F_{y}}\mid_{Z_{y}})=\mathcal{O}_{Y,y}\otimes H^{\bullet}_{c}(F_{y},\mathcal{O}_{F_{y}}\mid_{Z_{y}}).$$

Note that the canonical injective morphism of complexes $$ \mathcal{O}_{Y,y}\otimes C_{c}^{\bullet}(\mathfrak{K}, \mathcal{O}_{F_{y}}\mid_{Z_y})\hookrightarrow C_{c}^{\bullet}(\mathfrak{K},\mathcal{O}_{X}\mid_{Z_{y}})$$ has a dense image. 

Now, every cochain $f=(f_{s_0,\cdots,s_{p}})$ in $C_{c}^{p}(\mathfrak{K},\mathcal{O}_{X}\mid_{Z_{y}})$ can be represented as $f=\sum\limits_{I\in \mathbb{Z}_{\geq 0}^{m}}f_{I}z^{I}$, where $z=(z_{1},\cdots,z_m)$ is an local coordinates with origin at the point $y\in Y$, $f_{I}=(f_{I, (s_0,\cdots,s_{p})})\in C_{c}^{p}(\mathfrak{K}, \mathcal{O}_{F_{y}}\mid_{Z_{y}})$ and the power series $\sum\limits_{I\in \mathbb{Z}_{\geq 0}^{m}}f_{I, (s_0,\cdots, s_p)}z^{I}$ uniformly converges on compact sets in $U\times V$ (here $U\subset Y$ is a small polydisk with center $y\in Y$, $V$ is a Stein neigbourhood of $K_{s_0,\cdots s_p}$ in $F_{y}$).

Assume that $\delta(f)=0$; this means that for any $(s_{0},\cdots, s_{p+1})$ we have 
\begin{multline}
0=\delta(f)_{s_{0},\cdots, s_{p+1}}=\sum\limits_{k=0}^{p+1}(-1)^{k} \big(\sum\limits_{I\in \mathbb{Z}_{\geq 0}^{m}}f_{I,(s_{0},\cdots,\widehat{k},\cdots,s_{p+1})}z^{I}\big)=\\=\sum\limits_{I\in \mathbb{Z}_{\geq 0}^{m}} \big(\sum\limits_{k=0}^{p+1}(-1)^{k}f_{I, (s_{0},\cdots,\widehat{k},\cdots,s_{p+1})}\big)z^{I}.
\end{multline}

Therefore $\delta(f_{I})=0$. It follows that for every $q$ the canonical injective morphism $$ \mathcal{O}_{Y,y}\otimes Z_{c}^{q}(\mathfrak{K}, \mathcal{O}_{F_{y}}\mid_{Z_y})\hookrightarrow Z_{c}^{q}(\mathfrak{K},\mathcal{O}_{X}\mid_{Z_{y}})$$ has a dense image. 

Therefore the canonical homomorphism $$\mathcal{O}_{Y,y}\otimes H^{\bullet}_{c}(F_{y},\mathcal{O}_{F_{y}}\mid_{Z_y})\to H^{\bullet}_{c}(F_y,\mathcal{O}_{X}\mid_{Z_y})$$ has a dense image. 
\end{proof}

Let $X,Y,F$ be complex manifolds with countable basis and $\phi\colon X\to Y$ be a holomorphic fiber bundle with connected fiber $F$. Recall that by Remark \ref{directimage} we have the canonical isomorphism $(R^{\bullet}\phi_{!}\mathcal{O}_{X})_{y}\cong H^{\bullet}_{c}(F_y,\mathcal{O}_{X}\mid_{F_y})$, where $F_{y}=\phi^{-1}(y)$.

\begin{corollary}
Let $\phi\colon X\to Y$ be a holomorphic fiber bundle with fiber $F$. Let $F_{y}=\phi^{-1}(y)$. Then for any $y\in Y$ there is the canonical homomorphism $$\mathcal{O}_{Y, y}\otimes H^{\bullet}_{c}(F_{y},\mathcal{O}_{F_y})\to H^{\bullet}_{c}(F_y,\mathcal{O}_{X}\mid_{F_y})$$ which has a dense image with respect to the QDFS-topology on $H^{\bullet}_{c}(F_y,\mathcal{O}_{X}\mid_{F_y})$. 
\end{corollary}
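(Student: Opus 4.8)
The plan is to reduce the fiber-bundle statement to the product case already settled in Lemma \ref{Lemmaondense}. Since $\phi$ is a holomorphic fiber bundle, I would first choose a trivializing open neighbourhood $U$ of $y$, that is, a biholomorphism $\psi\colon \phi^{-1}(U)\overset{\sim}{\to} U\times F$ commuting with the projections to $U$. Under $\psi$ the fiber $F_{y}=\phi^{-1}(y)$ is carried biholomorphically onto $\{y\}\times F$, and the restricted structure sheaf $\mathcal{O}_{X}\mid_{F_{y}}$ is identified with $\mathcal{O}_{U\times F}\mid_{\{y\}\times F}$.

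Next I would observe that every object appearing in the statement is local near the fiber and is therefore insensitive to this replacement of $\phi^{-1}(U)$ by $U\times F$. Indeed, the stalk $\mathcal{O}_{Y,y}$ coincides with $\mathcal{O}_{U,y}$; the compactly supported cohomology groups $H^{\bullet}_{c}(F_{y},\mathcal{O}_{F_{y}})$ and $H^{\bullet}_{c}(F_{y},\mathcal{O}_{X}\mid_{F_{y}})$ depend only on the fiber together with the germ of $X$ along it; and the natural QDFS-topology on $H^{\bullet}_{c}(F_{y},\mathcal{O}_{X}\mid_{F_{y}})$ is exactly the one introduced for the bundle case in the remark following the construction of the QDFS-topology on $H^{\bullet}_{c}(Z_{y},\mathcal{F}\mid_{Z_{y}})$. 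Moreover the canonical homomorphism in the statement is the bundle analogue of the multiplication map $p_{1}^{-1}\mathcal{O}_{Y}\otimes p_{2}^{-1}\mathcal{O}_{F}\hookrightarrow \mathcal{O}_{X}$ used to define the map in Lemma \ref{Lemmaondense}, so it is transported to the corresponding product map by $\psi$.

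With these identifications in place, I would apply Lemma \ref{Lemmaondense} to the product $U\times F$, taking the closed submanifold $Z$ to be all of $F$, so that $Z_{y}=\{y\}\times F=F_{y}$ and $\mathcal{O}_{F_{y}}\mid_{Z_{y}}=\mathcal{O}_{F_{y}}$. The lemma then asserts precisely that
$$\mathcal{O}_{U,y}\otimes H^{\bullet}_{c}(F_{y},\mathcal{O}_{F_{y}})\to H^{\bullet}_{c}(F_{y},\mathcal{O}_{U\times F}\mid_{F_{y}})$$
has dense image for the QDFS-topology. Transporting this back through $\psi$ and using the identifications $\mathcal{O}_{U,y}=\mathcal{O}_{Y,y}$ and $\mathcal{O}_{U\times F}\mid_{F_{y}}\cong\mathcal{O}_{X}\mid_{F_{y}}$ yields the asserted density for $\phi$.

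The argument is essentially a matter of checking that the whole construction is intrinsic, so I do not expect a genuine obstacle; the only point requiring care is the verification that the canonical homomorphism and the QDFS-topology are independent of the chosen trivialization $\psi$. This follows because both are defined through the multiplication map $p_{1}^{-1}\mathcal{O}_{Y}\otimes p_{2}^{-1}\mathcal{O}_{F}\to\mathcal{O}_{X}$ and through \v Cech complexes for locally finite coverings of the single fiber $F_{y}$ by Stein compacts, neither of which depends on the trivialization once one has restricted attention to that fiber.
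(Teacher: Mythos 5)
Your proposal is correct and follows exactly the paper's own argument: the paper likewise notes that the problem is local, replaces $\phi^{-1}(U)$ by $U\times F$ via a trivialization, and applies Lemma \ref{Lemmaondense} with $Z=F$. Your write-up merely spells out in more detail the identifications (of $\mathcal{O}_{Y,y}$, of the QDFS-topology, and of the canonical homomorphism) that the paper leaves implicit.
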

\begin{proof}
The problem is local. We may assume that $X=Y\times F$ and apply Lemma \ref{Lemmaondense} for the case $Z=F$. 
\end{proof}

\begin{corollary}
Let $\phi\colon X\to Y$ be a holomorphic fiber bundle with noncompact fiber $F$. Then $\phi_{!}\mathcal{O}_{X}=0$. In particular, $H^{1}_{c}(X,\mathcal{O}_{X})\cong \Gamma_{c}(Y,R^{1}\phi_{!}\mathcal{O}_{X})$. 
\end{corollary}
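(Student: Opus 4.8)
The plan is to prove $\phi_{!}\mathcal{O}_{X}=0$ by a stalkwise computation and then to read off the stated isomorphism directly from Corollary \ref{corcompactspectral}. Since $\phi_{!}=R^{0}\phi_{!}$, Remark \ref{directimage}(2) identifies the stalk at any $y\in Y$ as $$(\phi_{!}\mathcal{O}_{X})_{y}\cong H^{0}_{c}(F_{y},\mathcal{O}_{X}\mid_{F_{y}}),\qquad F_{y}=\phi^{-1}(y),$$ so it suffices to show that this degree-zero compactly supported cohomology group vanishes for every $y$.

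First I would record that $H^{0}_{c}(F_{y},\mathcal{O}_{X}\mid_{F_{y}})$ is separated: in the \v Cech model it equals the kernel $Z^{0}_{c}(\mathfrak{K},\mathcal{O}_{X}\mid_{F_{y}})$ of the first differential, hence it is a closed subspace of the DFS space $C^{0}_{c}(\mathfrak{K},\mathcal{O}_{X}\mid_{F_{y}})$ and is therefore itself a DFS space, in particular separated. Next I would apply the preceding corollary (equivalently Lemma \ref{Lemmaondense} with $Z=F$), which furnishes the canonical homomorphism $$\mathcal{O}_{Y,y}\otimes H^{0}_{c}(F_{y},\mathcal{O}_{F_{y}})\to H^{0}_{c}(F_{y},\mathcal{O}_{X}\mid_{F_{y}})$$ with dense image for the QDFS-topology. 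Because the fiber $F$ is connected and noncompact, every compactly supported holomorphic function on $F_{y}\cong F$ vanishes by the identity principle, so $H^{0}_{c}(F_{y},\mathcal{O}_{F_{y}})=\Gamma_{c}(F_{y},\mathcal{O}_{F_{y}})=0$. Thus the source is $0$ and its dense image is the subspace $\{0\}$; since in a separated topological vector space the closure of $\{0\}$ is $\{0\}$, we conclude $H^{0}_{c}(F_{y},\mathcal{O}_{X}\mid_{F_{y}})=0$. As this holds for every $y\in Y$, the sheaf $\phi_{!}\mathcal{O}_{X}$ has all stalks zero, whence $\phi_{!}\mathcal{O}_{X}=0$.

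Finally I would invoke Corollary \ref{corcompactspectral} with $q=1$ and $\mathcal{F}=\mathcal{O}_{X}$. Its hypothesis ``$R^{i}\phi_{!}\mathcal{O}_{X}=0$ for all $i<1$'' is precisely the statement $\phi_{!}\mathcal{O}_{X}=0$ just established, and its conclusion yields simultaneously $H^{0}_{c}(X,\mathcal{O}_{X})=0$ and the desired $$H^{1}_{c}(X,\mathcal{O}_{X})\cong\Gamma_{c}(Y,R^{1}\phi_{!}\mathcal{O}_{X}).$$

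The only genuine content lies in the first step, and the main obstacle there is to rule out a nonzero compactly supported section of $\mathcal{O}_{X}\mid_{F_{y}}$. A naive restriction-to-the-fiber argument is delicate, since a germ on $X$ could a priori be nonzero while restricting to zero along $F_{y}$; the density-plus-separatedness argument above sidesteps this difficulty entirely by transporting the vanishing of $\Gamma_{c}(F_{y},\mathcal{O}_{F_{y}})$ across a dense map into a separated space.
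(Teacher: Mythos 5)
Your proof is correct and follows essentially the same route as the paper: identify the stalk $(\phi_{!}\mathcal{O}_{X})_{y}\cong\Gamma_{c}(F_{y},\mathcal{O}_{X}\mid_{F_{y}})$, note this space is separated (DFS), observe that the dense subspace $\mathcal{O}_{Y,y}\otimes\Gamma_{c}(F_{y},\mathcal{O}_{F_{y}})$ vanishes because a compactly supported holomorphic function on a connected noncompact fiber is zero, and conclude via Corollary \ref{corcompactspectral}. You merely supply details the paper leaves implicit (the \v{C}ech-model argument for separatedness and the identity-principle step), so there is nothing to correct.
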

\begin{proof}
Let $F_{y}=\phi^{-1}(y)$. We have $(\phi_{!}\mathcal{O}_{X})_{y}\cong \Gamma_{c}(F_{y}, \mathcal{O}_{X}\mid_{F_{y}})$. Since $\Gamma_{c}(F_{y}, \mathcal{O}_{X}\mid_{F_{y}})$ is separated DFS space and it has the dense subset $\mathcal{O}_{y}\otimes \Gamma_{c}(F_{y},\mathcal{O}_{F_y})=0$, it follows that $\phi_{!}\mathcal{O}_{X}=0$. By Corollary \ref{corcompactspectral} we obtain the last statement.
\end{proof}

So, we obtain the following vanishing result.
\begin{corollary}\label{vanishtheorem}
Let $\phi\colon X\to Y$ be a holomorphic fiber bundle with noncompact fiber $F$. Assume that for any $i<q$ sheaves $R^{i}\phi_{!}\mathcal{O}_{X}$ are separated sheaves. If $H^{i}_{c}(F,\mathcal{O}_{F})=0$ for any $i<q$, then $H^{i}_{c}(X,\mathcal{O}_{X})=0$ for any $i<q$. 
\end{corollary}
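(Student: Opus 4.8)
The plan is to combine the density lemma (Lemma \ref{Lemmaondense}, in its bundle-local form via the preceding corollary) with the separatedness hypothesis and the compact-support Leray spectral sequence from Corollary \ref{corcompactspectral}. The overall strategy is to first show that the sheaves $R^{i}\phi_{!}\mathcal{O}_{X}$ vanish for all $i<q$, and then invoke Corollary \ref{corcompactspectral} to transport this stalkwise vanishing to the global statement $H^{i}_{c}(X,\mathcal{O}_{X})=0$ for $i<q$.

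First I would reduce the vanishing of each sheaf $R^{i}\phi_{!}\mathcal{O}_{X}$ (for $i<q$) to the vanishing of its stalks. By Remark \ref{directimage}(2) we have the canonical isomorphism $(R^{i}\phi_{!}\mathcal{O}_{X})_{y}\cong H^{i}_{c}(F_{y},\mathcal{O}_{X}\mid_{F_{y}})$ for every point $y\in Y$, so it suffices to prove that this stalk is zero whenever $i<q$. Here the hypothesis that the sheaves $R^{i}\phi_{!}\mathcal{O}_{X}$ are separated for $i<q$ is exactly the statement that each stalk $H^{i}_{c}(F_{y},\mathcal{O}_{X}\mid_{F_{y}})$ carries a \emph{separated} QDFS-topology (rather than merely a QDFS-topology with possibly non-closed coboundary image).

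The key step is then the density-plus-separatedness argument. By the bundle form of the density lemma, the canonical homomorphism
$$\mathcal{O}_{Y,y}\otimes H^{i}_{c}(F_{y},\mathcal{O}_{F_{y}})\to H^{i}_{c}(F_{y},\mathcal{O}_{X}\mid_{F_{y}})$$
has dense image with respect to the QDFS-topology on the target. By hypothesis $H^{i}_{c}(F,\mathcal{O}_{F})=0$ for all $i<q$, so the source is zero for $i<q$, and hence the dense image is $\{0\}$. Since the target is separated (this is where the separatedness hypothesis is essential: the closure of $\{0\}$ in a separated space is $\{0\}$), the density of the zero subspace forces $H^{i}_{c}(F_{y},\mathcal{O}_{X}\mid_{F_{y}})=0$ for all $i<q$ and all $y$. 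Therefore $R^{i}\phi_{!}\mathcal{O}_{X}=0$ for all $i<q$.

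Finally I would apply Corollary \ref{corcompactspectral} directly: having established $R^{i}\phi_{!}\mathcal{O}_{X}=0$ for all $i<q$, that corollary yields $H^{i}_{c}(X,\mathcal{O}_{X})=0$ for all $i<q$, which is the desired conclusion. The main obstacle is conceptual rather than computational, and it is localized entirely in the third paragraph: one must be careful that density of a subspace in a topological vector space only forces the ambient space to equal the subspace's closure, so the argument genuinely requires the separatedness hypothesis to conclude that a dense zero subspace fills the whole stalk. All the topological bookkeeping (that the stalk carries a well-defined QDFS-topology, that the density map is the correct canonical one, and that separatedness of the sheaf is equivalent to separatedness of the stalks) has been set up in Section 3 and in Lemma \ref{Lemmaondense}, so no new analytic input is needed.
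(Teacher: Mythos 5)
Your proposal is correct and follows exactly the paper's own argument: the paper's (terse) proof likewise applies Lemma \ref{Lemmaondense} — with the separatedness hypothesis forcing the dense image of the zero space $\mathcal{O}_{Y,y}\otimes H^{i}_{c}(F_{y},\mathcal{O}_{F_{y}})$ to fill the stalk, so that $R^{i}\phi_{!}\mathcal{O}_{X}=0$ for $i<q$ — and then invokes Corollary \ref{corcompactspectral}. You have merely made explicit the stalkwise reduction and the density-plus-separatedness step that the paper leaves implicit.
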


\begin{proof}
By Lemma \ref{Lemmaondense} we have $R^{i}\phi_{!}\mathcal{O}_{X}=0$. By Corollary \ref{corcompactspectral} we obtain $H^{i}(X,\mathcal{O}_{X})=0$ for any $i<q$. 
\end{proof}

If the fiber $F$ is a Stein manifold then the spaces $H^{k}(F,\Omega^{l}_{F})$ are separated. We obtain the following vanishing result.

\begin{corollary}\label{Stein}
Let $\phi\colon X\to Y$ be a holomorphic fiber bundle with fiber $F$ which is a Stein manifold. Then $H^{i}_{c}(X,\mathcal{O}_{X})=0$ for all $i<\dim F$.
\end{corollary}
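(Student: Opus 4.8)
The plan is to deduce this from the preceding machinery, reducing the Stein-fiber case to an application of the general vanishing result in Corollary \ref{vanishtheorem}. The two hypotheses of that corollary must be verified: first, that $H^{i}_{c}(F,\mathcal{O}_{F})=0$ for all $i<\dim F$; and second, that the sheaves $R^{i}\phi_{!}\mathcal{O}_{X}$ are separated for all $i<\dim F$.

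First I would dispatch the vanishing hypothesis. Since $F$ is a Stein manifold of dimension $n=\dim F$, Serre duality for the structure sheaf (Theorem \ref{duality}) gives a topological duality between the associated separated spaces of $H^{i}_{c}(F,\mathcal{O}_{F})$ and $H^{n-i}(F,\Omega^{n}_{F})$. By Cartan's Theorem B, the higher cohomology $H^{n-i}(F,\Omega^{n}_{F})$ vanishes for $n-i>0$, i.e. for $i<n$. Consequently $H^{i}_{c}(F,\mathcal{O}_{F})=0$ for all $i<n=\dim F$, which is exactly the vanishing hypothesis needed.

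Second I would verify the separatedness hypothesis. Since $F$ is Stein, all the cohomology groups $H^{k}(F,\Omega^{l}_{F})$ vanish for $k>0$ and equal the (FS, hence separated) spaces of global sections for $k=0$; in particular they are separated for every $k,l$. The problem of checking separatedness of $R^{i}\phi_{!}\mathcal{O}_{X}$ is local on $Y$, so I may assume $X=Y\times F$ with $Y$ Stein; then Corollary \ref{corolseparated} applies and yields that $H^{i}_{c}(F_{y},\mathcal{O}_{X}\mid_{F_{y}})$ is separated for all $i$. Via the canonical identification $(R^{i}\phi_{!}\mathcal{O}_{X})_{y}\cong H^{i}_{c}(F_{y},\mathcal{O}_{X}\mid_{F_{y}})$ from Remark \ref{directimage}, this says exactly that the stalks of $R^{i}\phi_{!}\mathcal{O}_{X}$ are separated, i.e. the sheaves are separated in the required sense.

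Having established both hypotheses, I would invoke Corollary \ref{vanishtheorem} directly with $q=\dim F$ to conclude that $H^{i}_{c}(X,\mathcal{O}_{X})=0$ for all $i<\dim F$. I expect the genuinely substantive work to have already been absorbed into the earlier results: the main obstacle is not in this final deduction but in the separatedness input, which rests on the K\"unneth-formula and duality arguments of Proposition \ref{lemmaseparated} and Corollary \ref{corolseparated}. The only point demanding a small amount of care here is the localization step — arguing that separatedness of the stalks of $R^{i}\phi_{!}\mathcal{O}_{X}$ may be checked on product charts $Y\times F$ with $Y$ Stein — since Corollary \ref{corolseparated} is stated for products rather than for a general bundle.
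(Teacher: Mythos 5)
Your proof is correct and follows essentially the same route as the paper: reduce the separatedness hypothesis to the local product case over a Stein base, invoke Corollary \ref{corolseparated} to get separated stalks of $R^{i}\phi_{!}\mathcal{O}_{X}$, use the Stein vanishing $H^{i}_{c}(F,\mathcal{O}_{F})=0$ for $i<\dim F$, and conclude by Corollary \ref{vanishtheorem}. The only difference is that you spell out the Serre-duality justification for the fiber vanishing and the localization step, which the paper leaves implicit.
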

\begin{proof}
We may assume that $\phi\colon X\to Y$ is a trivial holomorphic fiber bundle over a Stein manifold $Y$. This means that $X=Y\times F$ and $\phi$ is a projection onto the first factor $Y$. We put $F_{y}:=\{y\}\times F$.

By Corollary \ref{corolseparated} we have that $H^{i}_{c}(F_y,\mathcal{O}_{X}\mid_{F_y})$ are separated spaces for any $q$. Since $F$ is a Stein manifold, $H^{i}_{c}(F,\mathcal{O}_{F})=0$ for all $i<\dim F$. Then by Corollary \ref{vanishtheorem} we obtain that $H^{i}_{c}(X,\mathcal{O}_{X})=0$ for all $i<\dim F$.
\end{proof}

In particular, for $F=\mathbb{C}^{n}$ and $n>1$ we obtain Dwilewicz's result about vanishing $H_{c}^{1}(X,\mathcal{O}_{X})$ for the total space $X$ of a complex fiber bundle \cite[Corollary 1.4]{Dwilewicz}. 

\begin{remark}
Note that, the space $\mathcal{O}_{Y, y}$ is nuclear. Assume that the spaces $$H^{\bullet}_{c}(Z_y,\mathcal{O}_{X}\mid_{Z_y}), H^{\bullet}_{c}(Z_{y},\mathcal{O}_{F_y}\mid_{Z_y})$$ are separated spaces.

For any open subset $W\subset Z_{y}$ the canonical injective map 
$$\mathcal{O}_{Y,y}\otimes \mathcal{O}_{F_y}\mid_{Z_y}(W)\hookrightarrow \mathcal{O}_{X}\mid_{Z_y}(W)$$ is continuous with respect to the projective tensor product topology on $$\mathcal{O}_{Y,y}\otimes \mathcal{O}_{F_y}\mid_{Z_y}(W)$$ and the canonical DFS-topology on $\mathcal{O}_{X}\mid_{Z_y}(W)$.  

It follows that the canonical map $$\mathcal{O}_{Y, y}\otimes H^{\bullet}_{c}(Z_{y},\mathcal{O}_{F_y}\mid_{Z_y})\to H^{\bullet}_{c}(Z_y,\mathcal{O}_{X}\mid_{Z_y})$$ is continuous with respect to the projective tensor product topology and QDFS-topology. Then the canonical continuous map $$\mathcal{O}_{Y, y}\widehat{\otimes} H^{\bullet}_{c}(Z_{y},\mathcal{O}_{F_y}\mid_{Z_y})\to H^{\bullet}_{c}(Z_y,\mathcal{O}_{X}\mid_{Z_y})$$ is surjective.
\end{remark}

\section{Holomorphic fiber bundle with $(1,0)$-com\-pac\-ti\-fiable fibers}\label{sectioncompactif}

We introduce the following definition.

\begin{definition}\label{maindef}
A noncompact complex manifold $X$ is called $(b,\sigma)$-com\-pa\-cti\-fiable if it admits a compactification $X'$ with the following properties:
\begin{enumerate}
\item $X'$ is a compact complex manifold;
\item $X'\setminus X$ is a proper analytic subset and it has $b$ connected components;
\item $\dim_{\mathbb{C}} H^{1}(X',\mathcal{O}_{X'})=\sigma$.
\end{enumerate} 
\end{definition}

The numbers $(b,\sigma)$ for a complex manifold $X$ are independent on the compactification $X'$. More precisely, we have the following remark. 

\begin{remark}
\begin{enumerate}
\item If $X$ is a $(b,\sigma)$-com\-pa\-cti\-fiable complex manifold, then $X$ has exactly $b$ topological ends (about topological ends see \cite{Freudenthal}, for CW-complexes see \cite{Peschke});
\item The number $\sigma$ is a birational invariant (see, for example, \cite[Corollary 1.4]{Rao});
\item Let $A(X')$ be the Albanese manifold for a compact complex algebraic manifold $X'$, then $\dim A(X')=\sigma$. 
\end{enumerate}
\end{remark}

Let $X,Y,F$ be complex manifolds with countable basis. Now we have the following important lemma on the stalks of sheaf $R^{1}\phi_{!}\mathcal{O}_{X}$. Recall that $$(R^{1}\phi_{!}\mathcal{O}_{X})_{y}\cong H^{1}_{c}(F_y,\mathcal{O}_{X}\mid_{F_y}),$$ where $F_{y}=\phi^{-1}(y)$.

\begin{lemma}\label{thmseparate}
Let $\phi\colon X\to Y$ be a holomorphic fibre bundle with noncompact fiber $F$ and $F_{y}=\phi^{-1}(y)$. Assume that $F$ is $(1,0)$-compactifiable and $\dim F>1$. Then the inductive topology on $H^{1}_{c}(F_y,\mathcal{O}_{X}\mid_{F_y})$ with respect to the pair $(F_{y}',F_y)$ is separated and the canonical map $\mathcal{O}_{Y, y}\otimes H^{1}_{c}(F_{y},\mathcal{O}_{F_y})\to H^{1}_{c}(F_y,\mathcal{O}_{X}\mid_{F_y})$ is injective with dense image with respect to this topology. 
\end{lemma}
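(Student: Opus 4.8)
The plan is to reduce to the product case and then to analyse the long exact sequence (\ref{exactpairsequence}) for $p=0$. Since the statement is local over $Y$, I may assume $X=Y\times F$ and $X'=Y\times F'$ with $Y$ a shrinking system of Stein neighbourhoods of $y$, so that the cohomology of $F'_y$ and $Z_y$ carries the tensor factor $\mathcal{O}_{Y,y}$. Writing $r\colon H^0(F',\mathcal{O}_{F'})\to H^0(Z,i^{-1}\mathcal{O}_{F'})$ for the fibrewise restriction (with $i\colon Z\hookrightarrow F'$), the two pairs $(F,F')$ and $(F_y,F'_y)$ fit into a commutative ladder of long exact sequences whose vertical arrows are the canonical maps of Lemma \ref{Lemmaondense}. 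First I would identify the outer terms over $F'_y$: by the K\"unnet formula (Proposition \ref{Kunnet}) applied to the compact manifold $F'$ and the Stein factor, together with $H^0(F',\mathcal{O}_{F'})=\CC$ and the $(1,0)$-compactifiability hypothesis $H^1(F',\mathcal{O}_{F'})=0$, one gets $H^0(F'_y,\mathcal{O}\mid_{F'_y})\cong\mathcal{O}_{Y,y}$ and, crucially, $H^1(F'_y,\mathcal{O}\mid_{F'_y})=0$.

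With the vanishing $H^1(F'_y,\mathcal{O}\mid_{F'_y})=0$ in hand, the segment of (\ref{exactpairsequence}) at $p=0$ shows that the connecting map $g$ is surjective, so the canonical embedding $H^0(Z_y,\mathcal{O}\mid_{Z_y})/f\bigl(H^0(F'_y,\mathcal{O}\mid_{F'_y})\bigr)\hookrightarrow H^1_c(F_y,\mathcal{O}\mid_{F_y})$ is a bijection; hence the inductive topology coincides with the quotient topology and is separated precisely when $\operatorname{im}(f)$ is closed. To control $\operatorname{im}(f)$ I would establish the topological identification $H^0(Z_y,\mathcal{O}\mid_{Z_y})\cong\mathcal{O}_{Y,y}\widehat{\otimes}H^0(Z,i^{-1}\mathcal{O}_{F'})$: since $Z$ is compact, every neighbourhood of $Z_y$ contains a product $U\times V$ (Siu's theorem and the tube lemma), on such products $\mathcal{O}(U\times V)\cong\mathcal{O}(U)\widehat{\otimes}\mathcal{O}(V)$ by Example \ref{remntensor}, and passing to the countable nuclear inductive limit lets me commute $\widehat{\otimes}$ with $\varinjlim$. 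Under this identification $f=\operatorname{id}\widehat{\otimes}\,r$, and since $r$ carries $1$ to the nonzero constant germ its image is a one-dimensional, hence closed and complemented, subspace $\CC\subset H^0(Z,i^{-1}\mathcal{O}_{F'})$; writing $H^0(Z,i^{-1}\mathcal{O}_{F'})=\CC\oplus W_1$ topologically exhibits $\operatorname{im}(f)=\mathcal{O}_{Y,y}\widehat{\otimes}\,\CC$ as a direct summand, so it is closed and the quotient is separated. This proves the separatedness assertion.

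For the second assertion I would identify the cokernel. The same computation for the fibre gives $H^1_c(F,\mathcal{O}_F)\cong\operatorname{coker}(r)\cong W_1$ (again using $H^1(F',\mathcal{O}_{F'})=0$ and that $r$ is a topological homomorphism), whence $H^1_c(F_y,\mathcal{O}\mid_{F_y})\cong\operatorname{coker}(f)\cong\mathcal{O}_{Y,y}\widehat{\otimes}\,W_1\cong\mathcal{O}_{Y,y}\widehat{\otimes}H^1_c(F,\mathcal{O}_F)$, the inductive topology corresponding to the completed-tensor topology. Under this isomorphism the canonical map of the statement becomes the natural inclusion $\mathcal{O}_{Y,y}\otimes H^1_c(F,\mathcal{O}_F)\hookrightarrow\mathcal{O}_{Y,y}\widehat{\otimes}H^1_c(F,\mathcal{O}_F)$ of the algebraic tensor product into its completion, which has dense image by construction and is injective because $\mathcal{O}_{Y,y}$ is nuclear and $H^1_c(F,\mathcal{O}_F)$ is separated. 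To see that this inclusion really is the map produced by Lemma \ref{Lemmaondense}, I would run the commutative ladder: Lemma \ref{Lemmaondense} applied to the compact $Z$ (where $H^\bullet_c=H^\bullet$) yields dense image on the $H^0(Z)$ level, and the surjectivity of both connecting maps $g$ propagates density down to the cokernels.

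I anticipate that the main obstacle is topological bookkeeping rather than any deep geometry: specifically, justifying $H^0(Z_y,\mathcal{O}\mid_{Z_y})\cong\mathcal{O}_{Y,y}\widehat{\otimes}H^0(Z,i^{-1}\mathcal{O}_{F'})$ for the non-coherent topological restriction $i^{-1}\mathcal{O}_{F'}$ (interchanging the completed tensor product with the inductive limit over shrinking product neighbourhoods) and verifying that $f$ is a topological homomorphism with closed image, so that the quotient, and with it the inductive topology, is genuinely separated. Once these points are secured, the exactness of $\widehat{\otimes}$ against the nuclear factor $\mathcal{O}_{Y,y}$ converts all the algebraic identifications into topological ones and both claims follow.
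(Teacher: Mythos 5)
Your proposal is correct and shares the paper's overall skeleton --- localize to $X=Y\times F$, use the long exact sequence of the pair $(F_y,F'_y)$, and compute $H^{0}(F'_y,\mathcal{O}_{X'}\mid_{F'_y})\cong\mathcal{O}_{Y,y}$ and $H^{1}(F'_y,\mathcal{O}_{X'}\mid_{F'_y})=0$ via Proposition \ref{Kunnet}, so that $H^{1}_{c}(F_y,\mathcal{O}_{X}\mid_{F_y})$ becomes the quotient of $H^{0}(Z_y,\mathcal{O}_{X'}\mid_{Z_y})$ by $\mathcal{O}_{Y,y}$ --- but you execute the two crucial points by a genuinely different route. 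For separatedness, the paper shows directly that $\mathcal{O}_{Y,y}$ (the germs constant on fibers) is closed: a convergent sequence in the DFS limit $\varinjlim\mathcal{O}(U_n\times V_n)$ converges at a fixed level of the limit, and a locally uniform limit of fiber-constant functions is again fiber-constant. You instead upgrade the whole short exact sequence to a completed-tensor description $H^{0}(Z_y,\mathcal{O}_{X'}\mid_{Z_y})\cong\mathcal{O}_{Y,y}\widehat{\otimes}H^{0}(Z,\mathcal{O}_{F'}\mid_{Z})$, write $f=\mathrm{id}\,\widehat{\otimes}\,r$, and split off the one-dimensional (hence closed and complemented) image of $r$. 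For density, the paper quotes its \v Cech density result (Lemma \ref{Lemmaondense}) and pushes density through the open quotient map, whereas you identify the cokernel topologically as $\mathcal{O}_{Y,y}\widehat{\otimes}H^{1}_{c}(F,\mathcal{O}_{F})$ and use density of the algebraic tensor product in its completion. Your route buys more: it exhibits the stalk $(R^{1}\phi_{!}\mathcal{O}_{X})_{y}$ with its inductive topology as $\mathcal{O}_{Y,y}\widehat{\otimes}H^{1}_{c}(F,\mathcal{O}_{F})$, which is stronger than the density statement of the lemma. The price is exactly the step you flag: commuting $\widehat{\otimes}$ with the countable inductive limits. That commutation is true for the nuclear DFS limits at hand (dualize to the nuclear Fr\'echet side, where the projective tensor product commutes with reduced projective limits, then use $(A\widehat{\otimes}B)'\cong A'\widehat{\otimes}B'$ and reflexivity to come back), but it is not part of the toolkit the paper develops, which is presumably why the paper takes the more pedestrian path; to make your proof complete you would need to write this lemma out. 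One small correction: Siu's theorem is irrelevant for producing the product neighbourhoods $U\times V\supset Z_y$, since $Z$ is compact and (in general) positive-dimensional, hence has no Stein neighbourhoods at all; the tube lemma alone gives cofinality of product neighbourhoods, and the isomorphism $\mathcal{O}(U\times V)\cong\mathcal{O}(U)\widehat{\otimes}\mathcal{O}(V)$ of Example \ref{remntensor} requires no Steinness.
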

\begin{proof}
By assumption there exists a compact complex manifold $F'$ such that $Z=F'\setminus F$ is a connected proper analytic subset and $H^{1}(F',\mathcal{O}_{F'})=0$. From the long exact sequence for the pair $(F,F')$ \cite[Chapter II, Section 10.3]{Bredon} we obtain the short exact sequence:
$$\xymatrix@C=0.5cm{
  0  \ar[r] & \mathbb{C} \ar[r] & H^{0}(Z,\mathcal{O}_{F}\mid_{Z})  \ar[r] & H^{1}_{c}(F,\mathcal{O}_{F}) \ar[r] & 0.  }$$

Now, since the problem is local, we may assume that $\phi\colon X\to Y$ is a trivial holomorphic fiber bundle over a Stein manifold $Y$. This means that $X=Y\times F$ and $\phi$ is a projection onto the first factor $Y$. We have $F_{y}=\{y\}\times F$, $Z_{y}=F_{y}'\setminus F_{y}$ and $X'=Y\times F'$. 

For the pair $(F_{y},F_{y}')$ and the sheaf $\mathcal{O}_{X'}\mid_{F_{y}'}$ we have the following long exact sequence (note that $H^{0}_{c}(F_{y},\mathcal{O}_{X}\mid_{F_{y}})=0$, because $F_{y}$ is a noncompact complex manifold): 
$$\xymatrix@C=0.5cm{
  0  \ar[r] & H^{0}(F_{y}',\mathcal{O}_{X'}\mid_{F_{y}'}) \ar[r] & H^{0}(Z_{y},\mathcal{O}_{X'}\mid_{Z_{y}}) \ar[r] & H^{1}_{c}(F_{y},\mathcal{O}_{X}\mid_{F_{y}}) \ar[r] & \\ \ar[r] & H^{1}(F_{y}',\mathcal{O}_{X'}\mid_{F_{y}'}) \ar[r] & \cdots }$$

Recall that $$H^{i}(F_{y}',\mathcal{O}_{X'}\mid_{F_{y}'})\cong \varinjlim\limits_{U\supset y}H^{i}(U\times F',\mathcal{O}_{X'})$$ where the inductive limit may be taken over a cofinal system of Stein neighbourhoods of the point $y$. 

By the K\"unnet theorem for topological tensor product (Proposition \ref{Kunnet}) we obtain 
$$H^{0}(U\times F',\mathcal{O}_{X'})\cong H^{0}(U,\mathcal{O}_{Y})\widehat{\otimes} H^{0}(F',\mathcal{O}_{F'})=H^{0}(U,\mathcal{O}_{Y}),$$ 
$$H^{1}(U\times F',\mathcal{O}_{X'})\cong H^{1}(U,\mathcal{O}_{Y})\widehat{\otimes} H^{0}(F',\mathcal{O}_{F'})\oplus H^{0}(U,\mathcal{O}_{Y})\widehat{\otimes} H^{1}(F',\mathcal{O}_{F'})=0.$$ (by assumption, we have $H^{1}(F', \mathcal{O}_{F'})=0$, and for the Stein neighbourhood $U$ of the point $y$ we have $H^{1}(U,\mathcal{O}_{Y})=0$). Therefore, $$H^{0}(F_{y}',\mathcal{O}_{X'}\mid_{F_{y}'})\cong \varinjlim\limits_{U\supset y}H^{0}(U,\mathcal{O}_{Y})=\mathcal{O}_{Y,y},$$ $$H^{1}(F_{y}',\mathcal{O}_{X'}\mid_{F_{y}'})=0.$$

So, we obtain the short exact sequence 
$$\xymatrix@C=0.5cm{
  0  \ar[r] & \mathcal{O}_{Y,y} \ar[r] & H^{0}(Z_{y},\mathcal{O}_{X'}\mid_{Z_{y}})  \ar[r] & H^{1}_{c}(F_{y},\mathcal{O}_{X}\mid_{F_{y}}) \ar[r] & 0. }$$

For the algebraic tensor product $p_{1}^{-1}\mathcal{O}_{Y}\otimes p_{2}^{-1}\mathcal{O}_{F'}$ we have the following canonical isomorphisms  $$(p_{1}^{-1}\mathcal{O}_{Y}\otimes p_{2}^{-1}\mathcal{O}_{F'})\mid_{F_{y}'}\cong\mathcal{O}_{Y,y}\otimes \mathcal{O}_{F_{y}'},$$ $$(p_{1}^{-1}\mathcal{O}_{Y}\otimes p_{2}^{-1}\mathcal{O}_{F'})\mid_{Z_{y}}\cong\mathcal{O}_{Y,y}\otimes (\mathcal{O}_{F_{y}'}\mid_{Z_y}).$$

Apply the universal coefficient theorem \cite[Theorem 15.3]{Bredon} to the long exact sequence of the cohomology groups of the sheaf $\mathcal{O}_{Y,y}\otimes \mathcal{O}_{F_{y}'}$ under the pair $(F_{y},F_{y}')$  we also have the following short exact sequence. 
$$\xymatrix@C=0.5cm{
  0  \ar[r] & \mathcal{O}_{Y,y} \ar[r] & \mathcal{O}_{Y,y}\otimes H^{0}(Z_{y},\mathcal{O}_{F'_{y}}\mid_{Z_{y}})  \ar[r] & \mathcal{O}_{Y,y}\otimes H^{1}_{c}(F_{y},\mathcal{O}_{F_{y}}) \ar[r] & 0 }$$

Therefore we obtain the following commutative diagramm with exact rows and the maps $h_{1}, h_{2}$ have dense images (by Lemma \ref{Lemmaondense}). 

\[\tiny
\begin{diagram} 
\node{0\longrightarrow \mathcal{O}_{Y,y}} \arrow{e,t}{} \arrow{s,r}{\shortparallel}
\node{\mathcal{O}_{Y,y}\otimes H^{0}(Z_{y},\mathcal{O}_{F'_{y}}\mid_{Z_y})} \arrow{e,t}{} \arrow{s,r}{h_{1}}
\node{\mathcal{O}_{Y,y}\otimes H^{1}_{c}(F_{y},\mathcal{O}_{F_{y}})\longrightarrow 0} \arrow{s,r}{h_2} 
\\
\node{0\longrightarrow \mathcal{O}_{Y,y}\phantom{pp}} \arrow{e,t}{g_1} 
\node{\phantom{p}H^{0}(Z_{y},\mathcal{O}_{X'}\mid_{Z_{y}})\phantom{p}} \arrow{e,t}{} 
\node{\phantom{p}H^{1}_{c}(F_{y},\mathcal{O}_{X}\mid_{F_{y}})\longrightarrow 0}
\end{diagram}
\]

Note that $\mathcal{O}_{Y,y}$ is a closed subspace in $H^{0}(Z_{y},\mathcal{O}_{X'}\mid_{Z_{y}})$. Actually, let $\{f_{n}\}$ be a sequence of holomorphic function $f_n$ in a neigbourhood $U\times F$ of $F_{y}'$ which are constant on fibers. If $f=\lim\limits_{n\to\infty}f_n$ in a small neigbourhood $U'\times V$ of $Z_y$ (here $V\supset Z_{y}$, $y\in U'\subset U$), then $f$ is constant on fibers in $U'\times V$ over $U'$. It defines a function on $U'\times F$ which is constant on fibers. 

Note that since $h_{1}$ is injective, it follows that $h_{2}$ is also injective. 

The quotient space $H^{0}(Z_{y},\mathcal{O}_{X'}\mid_{Z_{y}})/\mathcal{O}_{Y,y}$ is a separated space which is algebraicaly isomorphic to $H^{1}_{c}(F_{y},\mathcal{O}_{X}\mid_{F_{y}})$. The inductive topology on $H^{1}_{c}(F_{y},\mathcal{O}_{X}\mid_{F_{y}})$ with respect to the pair $(F_{y}',F_{y})$ is exactly the separated quotient topology. Since $h_{1}$ has a dense image, it follows that the space $\mathcal{O}_{Y,y}\otimes H^{1}_{c}(F_{y},\mathcal{O}_{F_{y}})$ is dense in the separated quotient topology in $H^{1}_{c}(F_{y},\mathcal{O}_{X}\mid_{F_{y}})$. 

\end{proof}

\begin{corollary}\label{maincorcompactif}
Let $\phi\colon X\to Y$ be a holomorphic fibre bundle with $(1,0)$-com\-pac\-ti\-fiable fiber $F$, $\dim F>1$. If $H^{1}_{c}(F,\mathcal{O}_{F})=0$, then $H^{1}_{c}(X,\mathcal{O}_{X})=0$. 
\end{corollary}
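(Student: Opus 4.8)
The plan is to reduce the global vanishing $H^{1}_{c}(X,\mathcal{O}_{X})=0$ to a pointwise statement about the stalks of the derived direct image $R^{1}\phi_{!}\mathcal{O}_{X}$, and then to extract that statement directly from Lemma \ref{thmseparate}. By Corollary \ref{corcompactspectral} applied with $q=2$, it suffices to show that $R^{i}\phi_{!}\mathcal{O}_{X}=0$ for $i=0$ and $i=1$; the conclusion $H^{1}_{c}(X,\mathcal{O}_{X})=0$ then follows formally from the Leray spectral sequence for compact supports.

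For $i=0$: since the fiber $F$ is noncompact, every fiber $F_{y}=\phi^{-1}(y)$ is a noncompact complex manifold, so $(\phi_{!}\mathcal{O}_{X})_{y}\cong H^{0}_{c}(F_{y},\mathcal{O}_{X}\mid_{F_{y}})=0$; hence $\phi_{!}\mathcal{O}_{X}=0$ (this is exactly the earlier corollary on noncompact fibers). For $i=1$: recall $(R^{1}\phi_{!}\mathcal{O}_{X})_{y}\cong H^{1}_{c}(F_{y},\mathcal{O}_{X}\mid_{F_{y}})$, so it suffices to show this stalk vanishes for every $y\in Y$. Here Lemma \ref{thmseparate} does all the work. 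Under the hypotheses that $F$ is $(1,0)$-compactifiable and $\dim F>1$, the lemma asserts that the inductive topology on $H^{1}_{c}(F_{y},\mathcal{O}_{X}\mid_{F_{y}})$ with respect to the pair $(F_{y}',F_{y})$ is separated, and that the canonical map
$$\mathcal{O}_{Y,y}\otimes H^{1}_{c}(F_{y},\mathcal{O}_{F_{y}})\to H^{1}_{c}(F_{y},\mathcal{O}_{X}\mid_{F_{y}})$$
has dense image for this topology. Now the hypothesis $H^{1}_{c}(F,\mathcal{O}_{F})=0$ makes the source of this map vanish identically, so its image is the zero subspace. A zero subspace that is dense in a separated topological vector space forces the ambient space to be zero. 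Therefore $H^{1}_{c}(F_{y},\mathcal{O}_{X}\mid_{F_{y}})=0$ for all $y$, that is, $R^{1}\phi_{!}\mathcal{O}_{X}=0$.

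Combining the two vanishings $R^{0}\phi_{!}\mathcal{O}_{X}=R^{1}\phi_{!}\mathcal{O}_{X}=0$ and invoking Corollary \ref{corcompactspectral}, we conclude $H^{1}_{c}(X,\mathcal{O}_{X})=0$. The only genuinely substantial input is Lemma \ref{thmseparate}: the corollary itself is a formal consequence, and the real difficulty --- controlling the topology on the stalk well enough that density together with separatedness yields vanishing --- has already been absorbed into the proof of that lemma (via the mapping-cone description of the inductive topology, the K\"unnet formula, and the density statement of Lemma \ref{Lemmaondense}). So I do not expect a new obstacle at this stage; the one point worth double-checking is that the topology appearing in Lemma \ref{thmseparate} (the inductive topology with respect to the pair), rather than the coarser canonical QDFS-topology, is the one needed to make the \emph{dense zero subspace implies trivial space} argument valid --- and indeed the lemma is stated precisely for that finer, separated topology.
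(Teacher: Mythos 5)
Your proposal is correct and takes essentially the same route as the paper: the paper's own proof is just the citation ``By Lemma \ref{thmseparate} and Corollary \ref{corcompactspectral}'', which is exactly the argument you spell out --- $\phi_{!}\mathcal{O}_{X}=0$ from noncompactness of the fiber, vanishing of the stalks of $R^{1}\phi_{!}\mathcal{O}_{X}$ because the zero image is dense in a topology that Lemma \ref{thmseparate} guarantees is separated, and then the compact-support Leray spectral sequence. Your closing remark correctly identifies the one delicate point, namely that density and separatedness must hold for the same (inductive, with respect to the pair) topology, which is precisely what the lemma provides.
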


\begin{proof}
By Lemma \ref{thmseparate} and Corollary \ref{corcompactspectral}. 
\end{proof}

\begin{remark}
Note that, in the case of $\dim F=1$ we have $H^{1}_{c}(F,\mathcal{O}_F)\neq 0$. It follows that $H^{1}_{c}(X,\mathcal{O}_{X})$ is not necessary trivial. For example, for the tautological line bundle $X=\mathcal{O}(-1)$ over projective space $\mathbb{P}^{n}$ we have $H^{1}_{c}(X,\mathcal{O}_{X})=0$, but for trivial line bundle $X=\mathbb{P}^{n}\times\mathbb{C}$ we have $$H^{1}_{c}(X,\mathcal{O}_{X})\cong H^{0}(\mathbb{P}^{n}\times\{\infty\},\mathcal{O}_{X}\mid_{\mathbb{P}^{n}\times\{\infty\}})/\mathbb{C}\cong \mathcal{O}_{\mathbb{P}(\mathbb{C}),\infty}/\mathbb{C}\neq 0.$$
\end{remark}

\section{Cohomological criterion for the Hartogs phenomenon}

J.-P. Serre proved the Hartogs phenomenon for Stein manifolds by using triviality of the cohomology group with compact supports $H^{1}_{c}(X,\mathcal{O}_X)$ where $\mathcal{O}_X$ is the sheaf of holomorphic functions \cite{Serre}.

We consider a class of normal complex analytic varieties such that for this class the finite dimensional of the group $H^{1}_{c}(X,\mathcal{O}_X)$ is a necessary and sufficient condition for the Hartogs phenomenon for $X$. 

First for a compact set $K$ of $X$ define $\mu(K)$ to be the union of $K$ with all connected components of $X\setminus K$ that are relatively compact in $X$ (see \cite{Viorel} and \cite[Chapter VII, Section D]{Ganning}). We call a complex analytic variety $X$ to be connected at boundary if for every compact $K$ of $X$ the set $X\setminus \mu(K)$ is connected. Define the space of germs of holomorphic functions on boundary $$\mathcal{O}_{X}(\partial X):=\varinjlim\limits_{S} H^{0}(X\setminus S, \mathcal{O}_{X}),$$ where the inductive limit is taken over all compact subsets $S$ of $X$ (or over a cofinal part of them) (see \cite[Chapter VII, Section D]{Ganning}).

We need the following lemmas. 
\begin{lemma}\label{Lemma1}
Let $X$ be a noncompact normal complex analytic variety which is connected at boundary. Let $K\subset X$ be a compact set such that $X\setminus K$ is connected. If $\dim H^{1}_{c}(X,\mathcal{O}_{X})<\infty$, then the restriction homomorphism $\mathcal{O}_{X}(X) \to\mathcal{O}_{X}(X\setminus K)$ is an isomorphism.
\end{lemma}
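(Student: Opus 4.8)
The plan is to prove Lemma~\ref{Lemma1} by relating the restriction map to a long exact cohomology sequence with compact supports, and then using the finiteness hypothesis together with the standard Serre-type argument. The restriction homomorphism $\mathcal{O}_{X}(X)\to\mathcal{O}_{X}(X\setminus K)$ is always injective (by the identity theorem on the connected variety $X$ and connectedness of $X\setminus K$), so the real content is surjectivity: every $g\in\mathcal{O}_{X}(X\setminus K)$ must extend across $K$.

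First I would set up the mechanism of extension via $H^{1}_{c}$. The obstruction to extending a holomorphic function from $X\setminus K$ across $K$ lives in $H^{1}_{c}(X,\mathcal{O}_{X})$: given $g\in\mathcal{O}_{X}(X\setminus K)$, I choose a cutoff function $\chi\in C^{\infty}(X)$ with $\chi\equiv 1$ near $\partial\mu(K)$ (on a neighbourhood of the relevant boundary) and $\chi\equiv 0$ on a neighbourhood of $K$, so that $\chi g$ extends smoothly to all of $X$; then $\bar\partial(\chi g)$ is a smooth $(0,1)$-form with compact support, and its Dolbeault class gives an element of $H^{1}_{c}(X,\mathcal{O}_{X})$. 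If this class vanishes, one solves $\bar\partial u=\bar\partial(\chi g)$ with $u$ compactly supported; then $\chi g-u$ is a global holomorphic function on $X$ agreeing with $g$ off a compact set, hence equal to $g$ on the unbounded part and providing the extension.

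The key step, where the finiteness hypothesis enters, is to show that the class of $\bar\partial(\chi g)$ actually vanishes, or at least that the extension problem is solvable. Here I would exploit the assumption $\dim H^{1}_{c}(X,\mathcal{O}_{X})<\infty$ together with the space $\mathcal{O}_{X}(\partial X)$ of germs on the boundary and the "connected at boundary" condition. The idea is to consider the natural map sending a boundary germ $g$ to the compactly supported class $[\bar\partial(\chi g)]$; this defines a linear map $\mathcal{O}_{X}(\partial X)/\mathcal{O}_{X}(X)\to H^{1}_{c}(X,\mathcal{O}_{X})$. One shows this map is injective (an element in the kernel is a boundary germ that extends holomorphically, i.e.\ comes from $\mathcal{O}_{X}(X)$), and then uses that $H^{1}_{c}$ is finite-dimensional to control the cokernel together with a Mittag-Leffler/density argument on the exhaustion by compacts to force every $g$ defined off a single compact $K$ (with $X\setminus K$ connected) to extend. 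The "connected at boundary" hypothesis guarantees that shrinking to the complement of $\mu(K)$ does not disconnect the boundary, so that the germ of $g$ is well-defined and the extension across the relatively compact components of $X\setminus K$ is automatic.

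The main obstacle I anticipate is the passage from the cohomological finiteness to genuine solvability of the $\bar\partial$-problem with control on supports: finite-dimensionality of $H^{1}_{c}(X,\mathcal{O}_{X})$ does not by itself kill the obstruction class for a \emph{given} $g$, so one must argue that the image of the boundary-germ map is forced to lie in the (finite-dimensional) space in a way that, combined with the freedom to rescale $g$ by arbitrary holomorphic functions on $X\setminus K$, produces a contradiction unless the class vanishes. Concretely, the trick is that $\mathcal{O}_{X}(X\setminus K)$ is an infinite-dimensional module over $\mathcal{O}_{X}(X\setminus K)$ itself while the target $H^{1}_{c}$ is finite-dimensional, and multiplicativity of the extension obstruction lets one deduce that the obstruction must be identically zero. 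Making this algebraic/topological argument precise—identifying the correct module structure and invoking separatedness of the relevant QDFS/QFS topologies so that the limiting extension is holomorphic—is the delicate part, and I expect the proof to lean on the topological vector space facts collected in Section~3.
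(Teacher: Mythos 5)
Your setup is sound and is essentially the Dolbeault incarnation of what the paper does: the cutoff construction produces exactly the exact sequence $0 \to \mathcal{O}_X(X) \xrightarrow{r} \mathcal{O}_X(\partial X) \xrightarrow{c} H^1_c(X,\mathcal{O}_X)$ that the paper obtains from local cohomology with supports and passage to the limit over compacts (though on a normal \emph{variety}, which may be singular, the smooth Dolbeault resolution and cutoff argument are not available, so the sheaf-theoretic route is the safer one). The genuine gap is in the key step, and you have located it yourself but not closed it: finite-dimensionality of $H^1_c(X,\mathcal{O}_X)$ does not by itself kill the obstruction class of a given $g$, and the mechanism you propose --- ``multiplicativity of the extension obstruction'' plus an infinite-dimensional-source versus finite-dimensional-target count and a Mittag-Leffler/density argument --- does not work. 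The obstruction map $c$ is only linear (more precisely $\mathcal{O}_X(X)$-linear), not multiplicative in any useful sense; a linear map from an infinite-dimensional space to a finite-dimensional one forces only that $\ker c$ is infinite-dimensional, i.e.\ that \emph{many} boundary germs extend, not that \emph{all} do. Moreover, no separatedness or QDFS/QFS topology from Section 3 enters at this point: the paper's proof of this lemma is purely algebraic.

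What actually closes the gap is the Andreotti--Hill polynomial trick, which your sketch would need to reconstruct. With $m=\dim H^1_c(X,\mathcal{O}_X)$: (i) the $m+1$ classes $c(1),c(f),\dots,c(f^m)$ are linearly dependent (here one uses that $\mathcal{O}_X(\partial X)$ is a ring, so powers of $f$ make sense), giving a polynomial $P$ with $c(P(f))=0$, hence $P(f)=r(H)$ for a global $H$; (ii) a second dependence among $c(f),c(r(H)f),\dots,c(r(H)^m f)$ produces a global $F$ and $G=P_1(H)$ with $r(F)=r(G)f$; (iii) injectivity of $r$ gives $G^mH=G^mP(F/G)$ on $X$, so $F/G$ is holomorphic and locally bounded off $\{G=0\}$, and the Riemann extension theorem on the \emph{normal} space $X$ yields $F/G\in\mathcal{O}_X(X)$ with $r(F/G)=f$. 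Normality --- which your proposal never invokes beyond the identity theorem --- is essential precisely at step (iii), which is a sign that a purely functional-analytic or dimension-counting argument cannot suffice. Your final step (passing from a boundary-germ statement back to $X\setminus K$ via connectedness and uniqueness) matches the paper and is fine.
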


\begin{proof}

We consider the following exact sequence of cohomology groups \cite{BanStan}:

\begin{equation}\label{equat1}
	\xymatrix@C=0.5cm{
0 \ar[r] &  H^{0}_{K}(X,\mathcal{O}_{X}) \ar[r] &  H^{0}(X,\mathcal{O}_{X}) \ar[r]^{R_{K}} & H^{0}(X\setminus K,\mathcal{O}_{X}) \ar[r]^{F_{K}} & H^{1}_{K}(X,\mathcal{O}_{X}) \ar[r] & \cdots }
\end{equation}

Obviously, $H^{0}_{K}(X,\mathcal{O}_{X})=0$. So, the restriction homomorphism $R_{K}$ is injective. 

Recall that if $S,T\subset X$ are compact sets and $S\subset T$, then we obtain the canonical homomorphism $\phi_{ST}\colon H^{1}_{S}(X,\mathcal{O}_{X})\to H^{1}_{T}(X,\mathcal{O}_{X})$. Moreover, we have the canonical isomorphism \cite{BanStan} $$\varinjlim\limits_{S} H^{1}_{S}(X,\mathcal{O}_{X})\cong H^{1}_{c}(X,\mathcal{O}_{X})$$ where the inductive limit is taken over all compact subsets $S$ of $X$ (or over a cofinal part of them).

So, we obtain the following long exact sequence: 

\begin{equation}\label{seqexact2}
	\xymatrix@C=0.5cm{
0 \ar[r] &  \mathcal{O}_{X}(X) \ar[rr]^{r} && \mathcal{O}_{X}(\partial X) \ar[rr]^{c} && H^{1}_{c}(X,\mathcal{O}_{X}) \ar[r] & \cdots }
\end{equation}

Now we apply the Andreotti-Hill technique (see \cite[corollary 4.3]{AndrHill}). Let $m=\dim H^{1}_{c}(X,\mathcal{O}_{X})$. 

We can assume that $\mathcal{O}_{X}(\partial X)\neq\mathbb{C}$. Consider an equivalence class $f\in \mathcal{O}_{X}(\partial X)$ of a non-constant holomorphic function on $X\setminus K$. Suppose that $c(f)\neq 0$. We may assume that $c(f^{i})$ are non-zero for any $1\leq i\leq m+1$. The elements $$c(f), c(f^{2}),\cdots, c(f^{m+1})$$ are linearly dependent. This means that there exists a polynomial $P\in \mathbb{C}[T]$ of degree $m+1$ such that $c(P(f))=0$. It follows that there is a holomorphic function $H\in\mathcal{O}_{X}(X)$ which is non-constant and such that $r(H)=P(f)$. 

Now the elements $$ c(f),c(r(H)f),\cdots, c(r(H)^{m}f)$$ are also non-zero and there exists a polynomial $P_{1}\in \mathbb{C}[T]$ such that $c(P_{1}(r(H)f))=0$. It follows that there is a holomorphic function $F\in\mathcal{O}_{X}(X)$ such that $r(F)=P_{1}(r(H))f$. Setting $G=P_{1}(H)$, we obtain $r(F)=r(G)f$. 

Note that since $r(G^{m+1}H)=r(G^{m+1}P(F/G))$, it follows that $G^{m+1}H=G^{m+1}P(F/G)$. Hence on $X\setminus \{G=0\}$ we obtain $H=P(F/G)$. It follows that $$F/G\in \mathcal{O}_{X}(X\setminus \{G=0\})$$ and locally bounded in $X\setminus \{G=0\}$. Since $G\neq 0$, the Riemann extension theorem implies that $F/G\in \mathcal{O}_{X}(X)$. So, $r(F/G)=f$.

Therefore, the induced homomorphism $r\colon \mathcal{O}_{X}(X)\to \mathcal{O}_{X}(\partial X)$ is an isomorphism.

Let $f\in \mathcal{O}_{X}(X\setminus K)$. Let $t_{K}\colon \mathcal{O}_{X}(X\setminus K)\to \mathcal{O}_{X}(\partial X)$ be a natural homomorphism. There exists a function $\widehat{f}\in\mathcal{O}_{X}(X)$ such that $r(\widehat{f})=t_{K}(f)$.

\begin{equation}\label{diag1}
\begin{diagram}\tiny
\node[1]{\mathcal{O}_{X}(X)} \arrow[2]{e,t}{r_{K}} \arrow[1]{sse,r}{r} \arrow{se,r}{r_{K'}} 
\node[2]{\mathcal{O}_{X}(X\setminus K)} \arrow{sw,r}{r_{KK'}}\arrow{ssw,r}{t_{K}} \\
\node[2]{\mathcal{O}_{X}(X\setminus K')}\arrow[1]{s,r}{t_{K'}}\\
\node[2]{\mathcal{O}_{X}(\partial X)}
\end{diagram}
\end{equation}
 
Then there exists a compact set $K'\subset X$ such that $\widehat{f}\mid_{X\setminus K'}=f\mid_{X\setminus K'}$ (see diagramm \ref{diag1}). Recall that $X\setminus K$ is a connected set. Using the uniqueness theorem we obtain $\widehat{f}\mid_{X\setminus K}=f$.  The proof of the lemma is complete.
\end{proof}

In fact, applying the Andreotti-Hill technique to the exact sequence (\ref{equat1}) as in Lemma \ref{Lemma1} we obtain the following lemma. 

\begin{lemma}\label{Lemma2}
Let $X$ be a noncompact normal complex analytic variety. Let $K\subset X$ be a compact set such that $X\setminus K$ is connected. If $\dim H^{1}_{K}(X,\mathcal{O}_{X})<\infty$, then the restriction homomorphism $\mathcal{O}_{X}(X) \to\mathcal{O}_{X}(X\setminus K)$ is an isomorphism.
\end{lemma}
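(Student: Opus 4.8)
The plan is to run exactly the Andreotti--Hill argument used in the proof of Lemma~\ref{Lemma1}, but to apply it directly to the local cohomology sequence (\ref{equat1}) for the fixed compact $K$, with the finite-dimensional space $H^{1}_{K}(X,\mathcal{O}_{X})$ playing the role that $H^{1}_{c}(X,\mathcal{O}_{X})$ played there. Since $X$ is connected and noncompact, any global section of $\mathcal{O}_{X}$ supported in $K$ vanishes on the nonempty open set $X\setminus K$, hence everywhere, so $H^{0}_{K}(X,\mathcal{O}_{X})=0$ and the restriction $R_{K}\colon\mathcal{O}_{X}(X)\to\mathcal{O}_{X}(X\setminus K)$ is injective by exactness of (\ref{equat1}). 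It therefore remains only to prove that $R_{K}$ is surjective, i.e.\ that every $f\in\mathcal{O}_{X}(X\setminus K)$ extends holomorphically across $K$; the case of constant $f$ being trivial, I assume $f$ non-constant.

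Write $m=\dim H^{1}_{K}(X,\mathcal{O}_{X})<\infty$ and let $F_{K}$ be the connecting map from (\ref{equat1}) (note $F_{K}\circ R_{K}=0$). The images $F_{K}(1),F_{K}(f),\dots,F_{K}(f^{m})$ are $m+1$ elements of an $m$-dimensional space, hence linearly dependent, so there is a non-constant polynomial $P\in\mathbb{C}[T]$ with $F_{K}(P(f))=0$; by exactness $P(f)=H\mid_{X\setminus K}$ for some $H\in\mathcal{O}_{X}(X)$, and $H$ is non-constant because $f$ is (a fiber $P(f)=\mathrm{const}$ would force $f$ locally constant on the connected set $X\setminus K$). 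Setting $h=H\mid_{X\setminus K}=P(f)$ and repeating the argument with the classes $F_{K}(f),F_{K}(hf),\dots,F_{K}(h^{m}f)$ yields a polynomial $P_{1}$ and a function $F\in\mathcal{O}_{X}(X)$ with $F\mid_{X\setminus K}=P_{1}(h)f$. Putting $G=P_{1}(H)\in\mathcal{O}_{X}(X)$ gives $F=Gf$ on $X\setminus K$, so $f=F/G$ wherever $G\neq 0$.

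The final step is to extend $F/G$ across $\{G=0\}$ as in Lemma~\ref{Lemma1}. Writing $P=\sum_{i}a_{i}T^{i}$, the two global functions $G^{m}H$ and $\sum_{i}a_{i}F^{i}G^{m-i}$ agree on the nonempty open set $(X\setminus K)\cap\{G\neq 0\}$, since there $H=P(f)=P(F/G)$; by the identity theorem they coincide on all of $X$, so $H=P(F/G)$ on $X\setminus\{G=0\}$. Because $P$ has positive degree, the bound on $H$ forces $F/G$ to be locally bounded along $\{G=0\}$, and normality of $X$ together with the Riemann extension theorem gives $F/G\in\mathcal{O}_{X}(X)$. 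Connectedness of $X\setminus K$ and the uniqueness theorem then give $(F/G)\mid_{X\setminus K}=f$, so $f\in\mathrm{im}\,R_{K}$ and $R_{K}$ is an isomorphism.

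I expect the main obstacle to be the removable-singularity step: one must carefully verify that $F/G$ stays locally bounded near $\{G=0\}$ (using that $P$ is proper as a polynomial and $H=P(F/G)$ is holomorphic) and then invoke normality of $X$ to remove the singularity. A secondary point, also glossed over in Lemma~\ref{Lemma1}, is ensuring that $P$ can be taken non-constant and that $G,H$ are genuinely non-constant, so that the open sets on which the identity theorem is applied are nonempty; the finiteness hypothesis $\dim H^{1}_{K}(X,\mathcal{O}_{X})<\infty$ is precisely what powers the linear-dependence arguments producing $P$ and $P_{1}$. Beyond these, the reasoning is a verbatim transcription of \cite[corollary 4.3]{AndrHill} applied to (\ref{equat1}).
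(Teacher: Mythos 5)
Your proposal is correct and takes essentially the same approach as the paper: the paper's proof of Lemma~\ref{Lemma2} consists precisely of the observation that the Andreotti--Hill argument from Lemma~\ref{Lemma1} applies verbatim to the exact sequence (\ref{equat1}), with $H^{1}_{K}(X,\mathcal{O}_{X})$ playing the role of $H^{1}_{c}(X,\mathcal{O}_{X})$, which is exactly what you carry out. The minor points you flag (arranging $P$ to be non-constant, local boundedness of $F/G$ and normality for the Riemann extension step) are glossed in the same way in the paper's own proof of Lemma~\ref{Lemma1}.
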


Now we have the following cohomological criterion for the Hartogs phenomenon.

 \begin{theorem}
 	\label{th1}
 	Let $X$ be a noncompact normal complex analytic variety which is connected at boundary. Assume that $X$ admits an open embedding (not necessary with dense image) $X\hookrightarrow X'$ into a topological space $X'$ and there exists a sheaf of $\mathbb{C}$-vector spaces $\mathcal{F}$ on $X'$ with $\dim H^{1}(X',\mathcal{F})<\infty$ and $\mathcal{F}\mid_{X}=\mathcal{O}_{X}$.	Then $X$ admits the Hartogs phenomenon if and only if $\dim H^{1}_{c}(X,\mathcal{O}_X)<\infty$.
\end{theorem}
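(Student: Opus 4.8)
The plan is to prove the two implications separately, using the exact sequence (\ref{seqexact2}) together with the embedding $X\hookrightarrow X'$, the latter serving only to replace the possibly infinite-dimensional $H^{1}(X,\mathcal{O}_{X})$ by the finite-dimensional $H^{1}(X',\mathcal{F})$. Throughout write $Z:=X'\setminus X$, let $j\colon X\hookrightarrow X'$ be the open inclusion, and recall from the proof of Lemma \ref{Lemma1} that the Hartogs property in the case $W=X$ is equivalent to surjectivity (hence, since $r$ is injective, bijectivity) of $r\colon\mathcal{O}_{X}(X)\to\mathcal{O}_{X}(\partial X)$ in (\ref{seqexact2}), i.e. to the vanishing of the connecting map $c\colon\mathcal{O}_{X}(\partial X)\to H^{1}_{c}(X,\mathcal{O}_{X})$.

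For the implication ``Hartogs $\Rightarrow\dim H^{1}_{c}(X,\mathcal{O}_{X})<\infty$'' I would first produce a natural map $\iota\colon H^{1}_{c}(X,\mathcal{O}_{X})\to H^{1}(X',\mathcal{F})$. Since every compact subset of $X$ is closed in $X'$ and $\mathcal{F}\mid_{X}=\mathcal{O}_{X}$, the compact subsets of $X$ form a paracompactifying family of supports $\Phi$ on $X'$ for which $H^{\bullet}_{\Phi}(X',\mathcal{F})\cong H^{\bullet}_{c}(X,\mathcal{O}_{X})$; the forget-supports map then yields $\iota$ together with an exact piece $\varinjlim_{K}H^{0}(X'\setminus K,\mathcal{F})\xrightarrow{c'}H^{1}_{c}(X,\mathcal{O}_{X})\xrightarrow{\iota}H^{1}(X',\mathcal{F})$. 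Restricting sections to $X$ identifies $c'$ with $c$ composed with the restriction $\varinjlim_{K}H^{0}(X'\setminus K,\mathcal{F})\to\mathcal{O}_{X}(\partial X)$, so that $\operatorname{im}c'\subseteq\operatorname{im}c$. The Hartogs hypothesis forces $c=0$, hence $c'=0$, so $\iota$ is injective and $\dim H^{1}_{c}(X,\mathcal{O}_{X})\le\dim H^{1}(X',\mathcal{F})<\infty$.

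For the converse assume $\dim H^{1}_{c}(X,\mathcal{O}_{X})<\infty$. The case $W=X$ is exactly Lemma \ref{Lemma1}. For an arbitrary domain $W$ and a compact $K\subset W$ with $W\setminus K$ connected I would rerun the Andreotti--Hill argument of Lemmas \ref{Lemma1} and \ref{Lemma2}, but with the obstruction pushed into the finite-dimensional space $H^{1}_{c}(X,\mathcal{O}_{X})$: for $f\in\mathcal{O}_{X}(W\setminus K)$ and a cutoff $\chi$ with $\chi\equiv 1$ near $K$ and compact support in $W$, the form $\bar{\partial}((1-\chi)f)$, extended by zero, is a $\bar{\partial}$-closed compactly supported $(0,1)$-form on $X$ whose class $c_{W}(f)\in H^{1}_{c}(X,\mathcal{O}_{X})$ depends linearly on $f$ and not on $\chi$. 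If $c_{W}(f)=0$, solving $\bar{\partial}v=\bar{\partial}((1-\chi)f)$ with $v\in\Gamma_{c}(X,\mathcal{A}^{0,0}_{X})$ gives $F:=(1-\chi)f-v\in\mathcal{O}_{X}(W)$; since $v$ is holomorphic and compactly supported off $\operatorname{supp}\bar{\partial}\chi\subset W$, it vanishes on the non-relatively-compact component of $X\setminus\operatorname{supp}\bar{\partial}\chi$, which meets $W\setminus K$ because $X$ is noncompact, so $F=f$ on a nonempty open subset of the connected set $W\setminus K$ and therefore on all of it. Finite-dimensionality of $H^{1}_{c}(X,\mathcal{O}_{X})$ makes $c_{W}(1),c_{W}(f),\dots,c_{W}(f^{m})$ linearly dependent, and the field-of-fractions bootstrap of Lemma \ref{Lemma1} upgrades the extension of the resulting $P(f)$ to an extension of $f$ itself; where the pertinent local cohomology happens to be finite one may instead invoke Lemma \ref{Lemma2} after the excision isomorphisms $H^{1}_{K}(W,\mathcal{O}_{W})\cong H^{1}_{K}(X,\mathcal{O}_{X})\cong H^{1}_{K}(X',\mathcal{F})$.

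The hardest parts, I expect, are two. On the ``only if'' side it is the sheaf-theoretic identification on $X'$: because $X'$ is merely a topological space and $X$ need not be dense, the naive isomorphism $H^{\bullet}_{c}(X,\mathcal{O}_{X})\cong H^{\bullet}(X',j_{!}\mathcal{O}_{X})$ can fail, and one must genuinely work with the family of supports $\Phi$ and verify the exactness feeding into $\iota$. On the ``if'' side it is the connectedness bookkeeping: guaranteeing that the globally defined, compactly supported solution $v$ on $X$ vanishes on a piece of $W\setminus K$ lying near the boundary of $W$, which is precisely where the hypotheses that $X$ is connected at boundary and that $W\setminus K$ is connected become indispensable, together with making the multiplicativity in the Andreotti--Hill step legitimate for a multiplier that is only holomorphic on $W$.
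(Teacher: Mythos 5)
Your ``only if'' half is essentially the paper's own proof. The paper forms $E^{0}(X')=\varinjlim_{K}H^{0}(X'\setminus K,\mathcal{F})$ and $E^{1}_{c}(X')=\varinjlim_{K}H^{1}_{K}(X',\mathcal{F})\cong H^{1}_{c}(X,\mathcal{O}_{X})$ (your $H^{\bullet}_{\Phi}(X',\mathcal{F})$ is exactly this colimit) and shows that $\mathcal{F}(X')\to E^{0}(X')$ is surjective, using the Hartogs hypothesis on the sets $\mu(K)$ together with connectedness at boundary, whence $E^{1}_{c}(X')\to H^{1}(X',\mathcal{F})$ is injective. Your variant, proving instead that the connecting map $c'$ vanishes via the factorization $c'=c\circ(\text{restriction})$ and $c=0$, reads the same exact sequence from the other end; it is the same argument, at the same level of rigor concerning supports on the merely-topological space $X'$.

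The genuine gap is in the ``if'' half. Your primary route defines the obstruction $c_{W}(f)$ as the class of $\bar{\partial}((1-\chi)f)$ in $H^{1}_{c}(X,\mathcal{O}_{X})$ and, when it vanishes, solves $\bar{\partial}v=\bar{\partial}((1-\chi)f)$ with compact support. This presupposes that $H^{1}_{c}(X,\mathcal{O}_{X})$ is computed by the compactly supported smooth Dolbeault complex. But the theorem is stated for a normal complex analytic \emph{variety} $X$, possibly singular, and on singular spaces the smooth Dolbeault complex is not a resolution of $\mathcal{O}_{X}$, so neither the class $c_{W}(f)\in H^{1}_{c}(X,\mathcal{O}_{X})$ nor the solvability step is justified. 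This is precisely why the paper argues sheaf-theoretically: Lemma \ref{Lemma1} applied to $X$ gives that $r_{2}\colon\mathcal{O}_{X}(X)\to\mathcal{O}_{X}(X\setminus K)$ is an isomorphism; gluing with $\mathcal{F}\mid_{X}=\mathcal{O}_{X}$ gives that $r_{1}\colon\mathcal{F}(X')\to\mathcal{F}(X'\setminus K)$ is surjective; hence $H^{1}_{K}(X',\mathcal{F})$ injects into $H^{1}(X',\mathcal{F})$ and is finite-dimensional; excision transfers this to $\dim H^{1}_{K}(W,\mathcal{O}_{X})<\infty$; and Lemma \ref{Lemma2} applied to $W$ concludes. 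Your fallback clause (``where the pertinent local cohomology happens to be finite one may instead invoke Lemma \ref{Lemma2}'') names exactly this route, but you never establish that finiteness, and establishing it is the entire content of the step; it is also the only place in this direction where the embedding $X\hookrightarrow X'$ and $\dim H^{1}(X',\mathcal{F})<\infty$ enter. Note, moreover, that a sheaf-theoretic rerun of your argument with the obstruction taken in $H^{1}_{c}(X,\mathcal{O}_{X})$ would not close the gap, because the natural map $H^{1}_{K}(W,\mathcal{O}_{X})\cong H^{1}_{K}(X,\mathcal{O}_{X})\to H^{1}_{c}(X,\mathcal{O}_{X})$ need not be injective: vanishing of the image of $f$ in $H^{1}_{c}$ does not by itself produce an extension to $W$. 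Finally, even when $X$ is smooth your connectivity step is unproved: the assertion that a non-relatively-compact component of $X\setminus supp(\bar{\partial}\chi)$ meets $W\setminus K$ ``because $X$ is noncompact'' is not a proof (noncompactness gives the existence of such components, not that any of them meets $W\setminus K$); you flag this as the hard point but leave it open.
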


\begin{proof}
Assume that $\dim H^{1}_{c}(X,\mathcal{O}_{X})<\infty$. Let $W\subset X$ be a domain and $K\subset W$ be a compact set such that $W\setminus K$ is connected. Note that since $X, W, W\setminus K$ are connected sets, it follows that $X\setminus K$ is connected set.

Further on, we have the following commutative diagram for $K\subset W\subset X\subset X'$ with exact rows. 

\[\tiny
\begin{diagram} 
\node[2]{\mathcal{F}(X')} \arrow{e,t}{r_1} \arrow{s,r}{}
\node{\mathcal{F}(X'\setminus K)} \arrow{e,t}{} \arrow{s,r}{}
\node{H^{1}_{K}(X',\mathcal{F})} \arrow{s,r}{h_1} \arrow{e,t}{}
\node{H^{1}(X',\mathcal{F})}
\\
\node{0}\arrow{e,t}{}
\node{\mathcal{O}_{X}(X)} \arrow{e,t}{r_{2}} \arrow{s,r}{}
\node{\mathcal{O}_{X}(X\setminus K)} \arrow{e,t}{} \arrow{s,r}{}
\node{H^{1}_{K}(X,\mathcal{O}_{X})} \arrow{s,r}{h_2} 
\\
\node{0}\arrow{e,t}{}
\node{\mathcal{O}_{X}(W)} \arrow{e,t}{r_3} 
\node{\mathcal{O}_{X}(W\setminus K)} \arrow{e,t}{} 
\node{H^{1}_{K}(W,\mathcal{O}_{X})} 
\end{diagram}
\]

Since $r_{2}$ is an isomorphism and $X\setminus K=(X'\setminus K)\cap X$, then $r_{1}$ is surjective. Therefore, $\dim H^{1}_{K}(X',\mathcal{O}_{X'})<\infty$. Using the excision property (see \cite{BanStan}) we obtain $h_1$ and $h_2$ are canonical isomorphisms. It follows that $\dim H^{1}_{K}(W,\mathcal{O}_{X})<\infty$. By Lemma \ref{Lemma2}, $r_3$ is an isomorphism. 

Now we assume that $X$ admits the Hartogs phenomenon. We define $$E^{0}(X'):=\varinjlim\limits_{K} H^{0}(X'\setminus K, \mathcal{F}),$$ and $$E^{1}_{c}(X'):=\varinjlim\limits_{K} H^{1}_{K}(X', \mathcal{F})$$ where the inductive limit is taken over all compact subsets $K$ of $X$ (note that not every compact set of $X'$ is a compact set in $X$).  The excision property (see \cite{BanStan}) implies that $$E^{1}_{c}(X')\cong H^{1}_{c}(X,\mathcal{O}_{X}).$$

Since for every compact set $K\subset X$ we have $X\setminus\mu(K)$ is a connected set, then the restriction homomorphism $\mathcal{O}_{X}(X)\to \mathcal{O}_{X}(X\setminus\mu(K))$ is an isomorphism. It follows that the restriction homomorphism $\mathcal{F}(X')\to \mathcal{F}(X'\setminus\mu(K))$ is a surjective map. 

We have the following exact sequence: 
\[
\begin{diagram} 
\node{\mathcal{F}(X')} \arrow{e,t}{r_{1}}
\node{E^{0}(X')} \arrow{e,t}{}
\node{E^{1}_{c}(X')}\arrow{e,t}{}
\node{H^{1}(X',\mathcal{F})}
\end{diagram}
\] where $r_1$ is surjective. It follows that $\dim H^{1}_{c}(X,\mathcal{O}_{X})\leq \dim H^{1}(X',\mathcal{F})<\infty$.  The proof of the theorem is complete.
\end{proof}

In particular, if $X$ as above in Theorem \ref{th1} and admits the Hartogs phenomenon, then $\dim H^{1}_{c}(X,\mathcal{O}_{X})\leq \dim H^{1}(X',\mathcal{F})$ and we obtain the following corollary.

\begin{corollary}\label{corth1}
Let $X$ be a noncompact normal complex analytic variety which is connected at boundary. Assume that $X$ admits an open embedding (not necessary with dense image) $X\hookrightarrow X'$ into a topological space $X'$ and there exists a sheaf of $\mathbb{C}$-vector spaces $\mathcal{F}$ on $X'$ with $ H^{1}(X',\mathcal{F})=0$ and $\mathcal{F}\mid_X=\mathcal{O}_{X}$. If $X$ admits the Hartogs phenomenon, then $H^{1}_{c}(X,\mathcal{O}_X)=0$.
\end{corollary}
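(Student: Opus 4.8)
The plan is to deduce the corollary directly from Theorem \ref{th1}, specializing to the case where $H^{1}(X',\mathcal{F})=0$. The standing hypotheses on $X$ (noncompact, normal, connected at boundary, openly embedded in $X'$ via a sheaf $\mathcal{F}$ with $\mathcal{F}\mid_X=\mathcal{O}_X$) coincide with those of the theorem, and the assumption $H^{1}(X',\mathcal{F})=0$ is a fortiori the finiteness condition $\dim H^{1}(X',\mathcal{F})<\infty$ required there. Thus Theorem \ref{th1} applies without modification, and I only need to track the quantitative refinement of its forward implication.

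First I would reproduce the exact sequence obtained in the ``admits Hartogs $\Rightarrow$ finite'' half of the proof of Theorem \ref{th1}, namely
$$\mathcal{F}(X')\overset{r_1}{\longrightarrow} E^{0}(X')\longrightarrow E^{1}_{c}(X')\longrightarrow H^{1}(X',\mathcal{F}),$$
recalling the two structural facts established there: the excision isomorphism $E^{1}_{c}(X')\cong H^{1}_{c}(X,\mathcal{O}_{X})$, and the surjectivity of the restriction map $r_1$. The latter is where the Hartogs hypothesis enters, through connectedness at boundary and the resulting isomorphism $\mathcal{O}_X(X)\cong\mathcal{O}_X(X\setminus\mu(K))$ for every compact $K\subset X$.

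The key step is then formal. Surjectivity of $r_1$ makes the connecting map $E^{0}(X')\to E^{1}_{c}(X')$ zero, so by exactness $E^{1}_{c}(X')$ injects into $H^{1}(X',\mathcal{F})$. Combining this with the excision isomorphism gives the bound
$$\dim H^{1}_{c}(X,\mathcal{O}_{X})=\dim E^{1}_{c}(X')\leq\dim H^{1}(X',\mathcal{F})=0,$$
which is precisely the estimate announced in the remark preceding the corollary. Hence $H^{1}_{c}(X,\mathcal{O}_X)=0$.

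I do not expect a genuine obstacle: every analytic ingredient---the Andreotti--Hill argument producing surjectivity of $r_1$ and the excision identification of $E^{1}_{c}(X')$---is already carried out inside Theorem \ref{th1}. The only thing left to check is that feeding $\dim H^{1}(X',\mathcal{F})=0$ into the inequality collapses it to the asserted vanishing, which is immediate.
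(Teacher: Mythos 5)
Your proposal is correct and is essentially the paper's own argument: the remark immediately preceding Corollary \ref{corth1} records exactly the bound $\dim H^{1}_{c}(X,\mathcal{O}_{X})\leq \dim H^{1}(X',\mathcal{F})$ extracted from the forward half of the proof of Theorem \ref{th1} (excision isomorphism $E^{1}_{c}(X')\cong H^{1}_{c}(X,\mathcal{O}_{X})$ plus surjectivity of $r_{1}$), and the corollary follows by setting the right-hand side to zero. Your only addition is to spell out the formal step that surjectivity of $r_{1}$ kills the map $E^{0}(X')\to E^{1}_{c}(X')$ and hence makes $E^{1}_{c}(X')\to H^{1}(X',\mathcal{F})$ injective, which the paper leaves implicit.
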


Let us note that for the implication "$\Leftarrow$" in Theorem \ref{th1} we could use the one-point compactification and the extension by zero of the sheaf $\mathcal{O}_{X}$. Namely, we have the following

\begin{corollary}\label{corth3}
Let $X$ be a noncompact normal complex analytic variety which is connected at boundary. If $\dim H^{1}_{c}(X,\mathcal{O}_{X})<\infty$, then $X$ admits the Hartogs phenomenon.
\end{corollary}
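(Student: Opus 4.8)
The plan is to obtain this as a direct specialization of the sufficiency direction (``$\Leftarrow$'') of Theorem \ref{th1}, by manufacturing, for an arbitrary such $X$, a concrete topological space $X'$ and a sheaf $\mathcal{F}$ that fulfill the hypotheses there. The whole point is that the auxiliary data demanded by Theorem \ref{th1} can always be produced, so that extra assumption is in fact automatic. First I would let $X^{+}=X\cup\{\infty\}$ be the Alexandroff one-point compactification: since $X$ is a locally compact Hausdorff complex analytic variety, $X^{+}$ is a compact Hausdorff topological space and the inclusion $j\colon X\hookrightarrow X^{+}$ is an open embedding. I would then take $\mathcal{F}:=j_{!}\mathcal{O}_{X}$, the extension by zero of the structure sheaf, which is a sheaf of $\mathbb{C}$-vector spaces on $X^{+}$ with stalk $0$ at $\infty$ and which by construction satisfies $\mathcal{F}\mid_{X}=\mathcal{O}_{X}$. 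Note that for the open embedding $j$ the functor $\phi_{!}$ of Definition of the direct image with compact supports coincides with extension by zero, so this $\mathcal{F}$ is exactly $j_{!}\mathcal{O}_{X}$ in the sense of Remark \ref{directimage}.

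The key step is the finiteness condition $\dim H^{1}(X^{+},j_{!}\mathcal{O}_{X})<\infty$. Since $X^{+}$ is compact, $H^{\bullet}(X^{+},-)=H^{\bullet}_{c}(X^{+},-)$, so I would apply the Leray spectral sequence with compact supports (Theorem \ref{compactcohomspectr}) to the continuous map $j\colon X\to X^{+}$ between locally compact spaces and the sheaf $\mathcal{O}_{X}$:
$$E_{2}^{p,q}=H^{p}_{c}(X^{+},R^{q}j_{!}\mathcal{O}_{X})\Longrightarrow H^{p+q}_{c}(X,\mathcal{O}_{X}).$$
By Remark \ref{directimage}(2) the stalks of $R^{q}j_{!}\mathcal{O}_{X}$ are the groups $H^{q}_{c}$ of the fibers $j^{-1}(y)$, which are single points for $y\in X$ and empty for $y=\infty$; hence $R^{q}j_{!}\mathcal{O}_{X}=0$ for all $q>0$, while $R^{0}j_{!}\mathcal{O}_{X}=j_{!}\mathcal{O}_{X}$. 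The spectral sequence therefore degenerates at $E_{2}$ onto its bottom row, yielding
$$H^{p}(X^{+},j_{!}\mathcal{O}_{X})=H^{p}_{c}(X^{+},j_{!}\mathcal{O}_{X})\cong H^{p}_{c}(X,\mathcal{O}_{X})$$
for every $p$ (equivalently, one could invoke Corollary \ref{corcompactspectral}). In particular $\dim H^{1}(X^{+},j_{!}\mathcal{O}_{X})=\dim H^{1}_{c}(X,\mathcal{O}_{X})<\infty$ by hypothesis.

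With $X'=X^{+}$ and $\mathcal{F}=j_{!}\mathcal{O}_{X}$ all hypotheses of Theorem \ref{th1} are now in force: $X$ is a noncompact normal complex analytic variety connected at boundary, $X\hookrightarrow X^{+}$ is an open embedding, $\mathcal{F}\mid_{X}=\mathcal{O}_{X}$, and $\dim H^{1}(X^{+},\mathcal{F})<\infty$. The sufficiency direction of Theorem \ref{th1} then gives at once that $X$ admits the Hartogs phenomenon. The proof is essentially a packaging of Theorem \ref{th1}, and I expect the only genuinely substantive point to be the cohomological identification $H^{\bullet}(X^{+},j_{!}\mathcal{O}_{X})\cong H^{\bullet}_{c}(X,\mathcal{O}_{X})$; the mild technical care needed there is to confirm that $\phi_{!}$ for $j$ is extension by zero and that the fibers of $j$ force the higher direct images to vanish, after which the degeneration is immediate from the paper's own spectral sequence. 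Everything else is a formal verification that the unconditional construction meets the hypotheses of Theorem \ref{th1}.
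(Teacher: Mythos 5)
Your proof is correct and takes essentially the same route as the paper: the paper's own proof also passes to the one-point compactification with $\mathcal{F}=j_{!}\mathcal{O}_{X}$ and invokes Theorem \ref{th1}, simply asserting the identification $H^{1}(X',\mathcal{F})=H^{1}_{c}(X',\mathcal{F})\cong H^{1}_{c}(X,\mathcal{O}_{X})$ that you verify in detail via the compactly supported Leray spectral sequence of Theorem \ref{compactcohomspectr}. Your added justification is sound, but it is an elaboration of the paper's argument rather than a different method.
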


\begin{proof}
Let $X'$ be the one-point compactification of $X$ and $\mathcal{F}$ be the extension by zero of the sheaf $\mathcal{O}_{X}$. Since $H^{1}(X',\mathcal{F})= H^{1}_{c}(X',\mathcal{F})\cong H^{1}_{c}(X,\mathcal{O}_{X})$, it follows that $X$ admits the Hartogs phenomenon by Theorem \ref{th1}.
\end{proof}

For example, for $(1,\sigma)$-compactifiable complex manifolds we obtain the following

\begin{corollary}\label{corth2}
Let $X$ be a $(1,\sigma)$-compactifiable complex analytic manifold. Then $X$ admits the Hartogs phenomenon if and only if $\dim H^{1}_{c}(X,\mathcal{O}_X)<\infty$.
\end{corollary}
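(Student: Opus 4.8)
The plan is to obtain Corollary \ref{corth2} as a direct application of the cohomological criterion Theorem \ref{th1}, so the whole task reduces to verifying the hypotheses of that theorem for a $(1,\sigma)$-compactifiable manifold $X$. First I would dispatch the cheap points: a complex manifold is in particular a normal complex analytic variety, and by Definition \ref{maindef} such an $X$ is noncompact. For the ambient space and the auxiliary sheaf I would take $X'$ to be the compactification furnished by the $(1,\sigma)$-compactifiable structure and set $\mathcal{F}=\mathcal{O}_{X'}$. Since $X$ is an open subset of the compact complex manifold $X'$ carrying the inherited complex structure, the embedding $X\hookrightarrow X'$ is open and $\mathcal{O}_{X'}\mid_{X}=\mathcal{O}_{X}$; moreover $\dim H^{1}(X',\mathcal{F})=\dim H^{1}(X',\mathcal{O}_{X'})=\sigma<\infty$ by property (3) of Definition \ref{maindef}. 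Thus every structural hypothesis of Theorem \ref{th1} except connectedness at boundary is immediate.

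The one substantive step, and where I expect the real work to be, is checking that $X$ is connected at boundary. By the Remark following Definition \ref{maindef}, a $(1,\sigma)$-compactifiable manifold has exactly $b=1$ topological end, and I would translate this into the defining condition. For a compact $K\subset X$ the set $X\setminus\mu(K)$ is exactly the union of the non-relatively-compact (unbounded) connected components of $X\setminus K$, so $X\setminus\mu(K)$ is connected precisely when $X\setminus K$ has a single unbounded component. The mechanism is monotonicity: for $K\subset K'$ the natural map from the unbounded components of $X\setminus K'$ to those of $X\setminus K$ is surjective, so the number $c(K)$ of unbounded components is non-decreasing in $K$, and the inverse limit $\varprojlim_{K}\pi_{0}^{\infty}(X\setminus K)$ computing the ends surjects onto each finite stage. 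Hence $1=b\geq c(K)\geq 1$ for every compact $K$, forcing $c(K)=1$; that is, $X\setminus\mu(K)$ is connected for all $K$, which is connectedness at boundary.

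With this in hand Theorem \ref{th1} applies verbatim and gives that $X$ admits the Hartogs phenomenon if and only if $\dim H^{1}_{c}(X,\mathcal{O}_{X})<\infty$, as asserted. I expect the end-counting argument to be the only delicate point: one must handle the inverse-limit formulation of "one end" carefully, using surjectivity of the transition maps and nonemptiness of inverse limits of finite nonempty sets to pass from a single end to a single unbounded component at \emph{every} finite stage. Note that one cannot shortcut the "only if" direction via Corollary \ref{corth1}, since that requires $H^{1}(X',\mathcal{F})=0$ rather than merely finite-dimensional, and here $\sigma$ need not vanish; the finite-dimensional version Theorem \ref{th1} is genuinely needed, while the "if" direction could alternatively be routed through Corollary \ref{corth3}. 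In all cases the connectedness-at-boundary hypothesis, hence the end count, remains the crux.
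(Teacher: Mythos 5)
Your proposal is correct and follows essentially the same route as the paper's (implicit) proof, since the paper states Corollary \ref{corth2} as a direct specialization of Theorem \ref{th1}: take $X'$ to be the compactification, $\mathcal{F}=\mathcal{O}_{X'}$ with $\dim H^{1}(X',\mathcal{O}_{X'})=\sigma<\infty$, and get connectedness at boundary from the one-end property recorded in the paper's Remark on $(b,\sigma)$-compactifiable manifolds. Your Freudenthal-style verification that one end forces a single non-relatively-compact component of $X\setminus K$ for every compact $K$ (surjectivity of the transition maps plus countable cofinality of the compact exhaustion) correctly supplies the detail the paper leaves implicit; alternatively, this step can be done purely complex-analytically, using that the unbounded components are exactly those whose closure meets the connected set $Z=X'\setminus X$ and that the complement of a proper analytic subset of a connected complex manifold is connected.
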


\begin{remark}
Let $X$ be a Stein manifold with $\dim X>1$. Since $X$ is connected at boundary \cite[Chapter VII, Section D, Theorem 2]{Ganning} and $H^{1}_{c}(X,\mathcal{O}_{X})=0$, it follows that $X$ admits the Hartogs phenomenon. 
\end{remark}

Let $X,Y,F$ be complex manifolds with countable basis. We obtain the following main result. 

\begin{theorem}\label{mainresult}
Let $\phi\colon X\to Y$ be a holomorphic fibre bundle with $(1,0)$-com\-pac\-ti\-fiable fiber $F$, $\dim F>1$. If $F$ admits the Hartogs phenomenon, then $X$ also admits the Hartogs phenomenon. 
\end{theorem}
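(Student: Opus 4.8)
The plan is to combine the two main technical achievements of the paper: the vanishing result for $H^1_c$ on the total space (Corollary \ref{maincorcompactif}) and the cohomological criterion for the Hartogs phenomenon (Theorem \ref{th1}). The strategy is to verify that $X$ satisfies all the hypotheses of the criterion, and then to show that the finiteness (indeed vanishing) of $H^1_c(X,\mathcal{O}_X)$ follows from the corresponding property of the fiber $F$.

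First I would observe that since $F$ admits the Hartogs phenomenon and is $(1,0)$-compactifiable (in particular $(1,\sigma)$-compactifiable with $\sigma=0$), Corollary \ref{corth2} applied to $F$ gives $\dim H^1_c(F,\mathcal{O}_F)<\infty$; but more is true, since the compactification $F'$ satisfies $H^1(F',\mathcal{O}_{F'})=0$, so by Corollary \ref{corth1} applied to $F$ we actually obtain $H^1_c(F,\mathcal{O}_F)=0$. Next I would feed this vanishing into Corollary \ref{maincorcompactif}: because $\phi\colon X\to Y$ is a holomorphic fiber bundle with $(1,0)$-compactifiable fiber $F$ and $\dim F>1$, the hypothesis $H^1_c(F,\mathcal{O}_F)=0$ yields $H^1_c(X,\mathcal{O}_X)=0$. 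Thus the cohomological obstruction on the total space vanishes outright.

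It then remains to check that $X$ itself falls under the scope of the Hartogs criterion, i.e.\ that $X$ is a noncompact normal complex manifold which is connected at boundary and admits an open embedding into a space $X'$ carrying a sheaf $\mathcal{F}$ with $H^1(X',\mathcal{F})$ finite-dimensional (vanishing would suffice) and $\mathcal{F}\mid_X=\mathcal{O}_X$. For the embedding I would use the fiberwise compactification: replacing the fiber $F$ by its compactification $F'$ produces a compact-fibered bundle $X'=\phi'{}^{-1}$, obtained from $X$ by attaching the associated $F'$-bundle, into which $X$ embeds as an open subset, with $X'\setminus X$ the fiberwise copy of the analytic set $Z=F'\setminus F$. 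Taking $\mathcal{F}$ to be the extension by zero of $\mathcal{O}_X$ to $X'$, one has $\mathcal{F}\mid_X=\mathcal{O}_X$ and $H^1(X',\mathcal{F})\cong H^1_c(X,\mathcal{O}_X)=0$; alternatively, since $H^1_c(X,\mathcal{O}_X)=0$ is already finite-dimensional, Corollary \ref{corth3} via the one-point compactification removes the need to construct $X'$ explicitly, provided connectedness at boundary is established.

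The main obstacle I anticipate is precisely verifying that $X$ is connected at boundary, since the Hartogs criterion is stated only for varieties with this property and it is not automatic for an arbitrary total space. Here I would argue that $F$ has exactly one topological end (because $Z=F'\setminus F$ is connected, so $b=1$), and that the bundle structure transports this to $X$: for a compact $K\subset X$, the complement $X\setminus\mu(K)$ should be connected because over each fiber the end behaviour is governed by the single-ended fiber $F$ while the base $Y$ contributes no disconnection. Once connectedness at boundary is secured, the proof closes immediately by invoking Corollary \ref{corth3} (or Theorem \ref{th1}) with the vanishing $H^1_c(X,\mathcal{O}_X)=0$ established above, concluding that $X$ admits the Hartogs phenomenon.
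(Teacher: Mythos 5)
Your core chain is exactly the paper's proof: Corollary \ref{corth1} applied to $F$ (taking the compactification $F'$ and $\mathcal{F}=\mathcal{O}_{F'}$) gives $H^{1}_{c}(F,\mathcal{O}_{F})=0$, Corollary \ref{maincorcompactif} then gives $H^{1}_{c}(X,\mathcal{O}_{X})=0$, and Corollary \ref{corth3} concludes; in substance you have reproduced the paper's argument.

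Two remarks on the surrounding material. First, the fiberwise compactification you propose as a carrier for Theorem \ref{th1} does not exist in general: the transition functions of $\phi$ are biholomorphisms of $F$, and these need not extend to biholomorphisms of $F'$ (a polynomial automorphism of $\mathbb{C}^{2}$ of degree greater than one does not extend to $\mathbb{P}^{2}$), so there is no ``associated $F'$-bundle'' containing $X$ as an open subset. This does not sink your proof only because your fallback --- the one-point compactification via Corollary \ref{corth3}, which needs no analytic structure on the compactification --- is precisely what the paper invokes; the fiberwise picture should simply be deleted rather than offered as an alternative. Second, your concern about connectedness at boundary of $X$ is legitimate: Corollary \ref{corth3} does require it, and the paper applies that corollary without comment, so this is a gap in the paper as much as in your sketch. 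The claim is true, and your intuition is the right one: $F$ has exactly one topological end (the paper's remark on $(b,\sigma)$-compactifiable manifolds, with $b=1$), so over a trivializing chart $U$ with $K\cap\phi^{-1}(\overline{U})\subset\overline{U}\times K_{F}$ the set $U\times E$ --- where $E$ is the unique non-relatively-compact component of $F\setminus K_{F}$, whose complement $\mu(K_{F})$ in $F$ is compact --- is connected and avoids $K$; these sets glue across overlapping charts inside single fibers, attach to the full fibers over $Y\setminus\phi(K)$, and every non-relatively-compact component of $X\setminus K$ must meet the resulting connected set, so there is exactly one such component. Turning that outline into an actual argument is the one piece of work your proposal, like the paper, leaves unfinished.
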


\begin{proof}
If $F$ admits the Hartogs phenomenon, then $H^{1}_{c}(F,\mathcal{O}_{F})=0$ (by Corollary \ref{corth1}). By Corollary \ref{maincorcompactif} we obtain $H^{1}_{c}(X,\mathcal{O}_{X})=0$. So, by Corollary \ref{corth3} we obtain that $X$ admits the Hartogs phenomenon.

Of course, instead Corollary \ref{corth3} we could use the standard $\bar{\partial}$-technique.
\end{proof}
 
\begin{example}\label{Examtoric}
Let $G$ be a semiabelian Lie group (i.e. G is an extension of an abelian manifold $A$ by an algebraic torus $T\cong (\mathbb{C}^{*})^{n}$). We have a principle $T$-bundle $G\to A$. Consider a toric $T$-manifold $F$ (i.e. $T$ algebraically acts on $F$ with an open dense $T$-orbit which is isomorphic to $T$) which has only one topological end, and an associated fiber bundle $G\times^{T}F\to A$. 

Any toric $T$-manifold $F$ admits a $T$-equivariant compactification $F'$ and moreover $H^{1}(F',\mathcal{O}_{F'})=0$ (see, for instance, \cite{Oda}). It follows that any toric $T$-manifold $F$ which has only one topological end is $(1,0)$-compactifiable.  

It follows that if the toric $T$-manifold $F$ has only one topological end, $\dim F>1$ and $F$ admits the Hartogs phenomenon, then $G\times^{T}F$ admits the Hartogs phenomenon.

For example, $F=\mathcal{O}_{\mathbb{P}^{1}}(-2)$ has a structure of a toric $(\mathbb{C}^{*})^{2}$-action and it has only one topological end. Note that $\mathcal{O}_{\mathbb{P}^{1}}(-2)$ is isomorphic to the blow-up $Bl_{0}(Cone(Q))$ of the normal 2-dimensional affine cone $Cone(Q)\subset\mathbb{C}^{3}$ at point $(0,0,0)$, where $Q=\{[z_{0}:z_{1}:z_{2}]\in\mathbb{CP}^{2}\mid z_{1}z_{3}-z_{2}^{2}=0\}$. It follows that $F$ admits the Hartogs phenomenon, because blow-down map is a proper map and $Cone(Q)$ admits the Hartogs phenomenon (about Hartogs phenomenon in normal Stein spaces see, for instance, \cite{Vassiliadou}). 

Note that each toric variety $F$ is encoded by a fan $\Sigma_{F}$ — a collection of strictly convex cones in a real vector space $\mathbb{R}^{\dim F}$ with the common apex that may intersect only along their common faces (see, for instance, \cite{Oda}). In the fan language the (1,0)-compactifiability of $F$ means that the support $\mid\Sigma_{F}\mid$ of $\Sigma_{F}$ satisfies $\mathbb{R}^{\dim F}\setminus \mid\Sigma_{F}\mid$ is a connected set. Moreover, a toric manifold $F$ with a fan $\Sigma_{F}$ admits the Hartogs phenomenon if and only if the convex hull of $\mathbb{R}^{\dim F}\setminus \mid\Sigma_{F}\mid$ is whole $\mathbb{R}^{\dim F}$ provided  $\mathbb{R}^{\dim F}\setminus \mid\Sigma_{F}\mid$ is connected. About the Hartogs phenomenon in toric varieties (and, more general, in spherical varieties) and examples see in the papers \cite{Fek1, Fek2}.
\end{example}

Acknowledgments: This work is supported by the Krasnoyarsk Mathematical Center and financed by the Ministry of Science and Higher Education of the Russian Federation (Agreement No. 75-02-2023-936).


\begin{thebibliography}{99}

\bibitem{Andersson} Andersson, M., Samuelsson, H.: Koppelman formulas and the $\bar{\partial}$-equation on an analytic space. Institut Mittag-Leffler preprint series, 31 (2008).

\bibitem{AndrHill} Andreotti, A., Hill, D.: E.E. Levi convexity and the Hans Levy problem. Part I: reduction to vanishing theorems. Ann. Sc. Norm. Super. Pisa. 26, 325-363 (1972). 

\bibitem{BanStan} B\u anic\u a, C., St\u an\u a\c sil\u a, O.: Algebraic Methods in the Global Theory of Complex Spaces. Wiley\& Sons, New York (1976).

\bibitem{Bredon} Bredon, G. E.: Sheaf Theory. Springer-Verlag, New York (1997).

\bibitem{ColtRupp} Coltoiu, M., Ruppenthal, J.: On Hartogs extension theorem on $(n-1)$-complete complex spaces. Journal f\"ur die reine und angewandte Mathematik (Crelles Journal). 637, 41-47 (2009). https://doi.org/10.1515/CRELLE.2009.089

\bibitem{Dem} Demailly, J.-P.: Complex analytic and differential geometry (2012). https://www-fourier.ujf-grenoble.fr/~demailly/manuscripts/agbook.pdf

\bibitem{Dwilewicz} Dwilewicz, R.: Holomorphic extensions in complex fiber bundles. J. Math. Anal. Appl. 322, 556-565 (2006). https://doi.org/10.1016/j.jmaa.2005.09.044

\bibitem{Iversen} Iversen, B.: Cohomology of sheaves. Universitext, Springer-Verlag, Berlin, Heidelberg (1986). 

\bibitem{Freudenthal} Freudenthal, H.: \"Uber die Enden topologischer R\"aume und Gruppen. Math. Z.33, 692-713, (1931). https://doi.org/10.1007/BF01174375

\bibitem{Forst} Forstneri\v c, F.: Stein Manifolds
and Holomorphic Mappings. Springer-Verlag, Berlin, Heidelberg (2011).

\bibitem{Ganning}Gunning,  R. C., Rossi, H.: Analytic Functions of Several Complex Variables. Prentice-Hall, Inc., Englewood Cliffs, N.J. (1965).

\bibitem{Grauert} Grauert, H., Peternell, T., Remmert R., (Eds.): Several Complex Variables VII. Sheaf-
theoretical methods in complex analysis. Springer-Verlag, 74 (1994).

\bibitem{GROTHENDIECK} Grothendieck, A., Chaljub, O.: Topological vector spaces. Notes on mathematics and its applications, Gordon and Breach (1973).

\bibitem{Harvey} Harvey, R.: The theory of hyperfunctions on totally real subsets of complex manifolds with applications to extension problems. Am. J. Math. 91, 853-873 (1969). https://doi.org/10.2307/2373307

\bibitem{Fek1} Feklistov, S.: The Hartogs extension phenomenon in almost homogeneous algebraic varieties. Mat. Sb. 213:12, 109-136 (2022). https://doi.org/10.4213/sm9677

\bibitem{Fek2} Feklistov, S., Shchuplev, A.: The Hartogs extension phenomenon in toric varieties. J. Geom. Anal. 31, 12034-12052 (2021). https://doi.org/10.1007/s12220-021-00710-4

\bibitem{Kaup} Kaup, L.: Eine K\"unnethformel f\"ur Fr\'echetgarben. Math. Z. 97:2, 158-168 (1967). 

\bibitem{Marc} Marciniak, M. A.: Holomorphic extensions in toric varieties. Doctoral Dissertations,
Ph. D. in Mathematics, Missouri (2009).

\bibitem{Marc1} Marciniak, M. A.: Holomorphic extensions in smooth toric surfaces. J. Geom. Anal. 22, 911-933 (2012). https://doi.org/10.1007/s12220-011-9219-7

\bibitem{Merker} Merker, J., Porten, E.: The Hartogs extension theorem on $(n-1)$-complete complex
spaces. J. Reine Angew. Math. 637, 23–39 (2009). https://doi.org/10.1515/CRELLE.2009.088

\bibitem{Vassiliadou} \O vrelid, N., Vassiliadou, S.: Hartogs Extension Theorems on Stein Spaces. J. Geom.
Anal. 20, 817-836 (2010). https://doi.org/10.1007/s12220-010-9134-3

\bibitem{Oda}Oda, T.: Convex Bodies and Algebraic Geometry. An Introduction to the Theory of Toric Varieties. Springer (1988)

\bibitem{Peschke} Peschke, G.: The theory of ends. Nieuw Archief voor Wiskunde. 8, 1-12 (1990). 

\bibitem{Pietsch} Pietsch, A., Ruckle, W. H.: Nuclear locally convex spaces. Springer-Verlag, Berlin, Heidelberg (1972).

\bibitem{Rao} Rao, S., Yang, S., Yang, X.: Dolbeault cohomologies of blowing up complex manifolds. J. Math. Pures Appl.130, 68-92 (2019). https://doi.org/10.48550/arXiv.1712.06749

\bibitem{Serre} Serre, J. P.: Quelques problemes globaux relatifs aux varietes de Stein. Coll. Plus. Var.
Bruxelles, 57-68 (1953).

\bibitem{Schaefer} Schaefer, H.H., Wolff, M.P.: Topological Vector Spaces. Graduate texts in mathematics, Springer Science+Business Media, New York (1999).

\bibitem{Viorel} V\^ij\^iitu, V.: On Hartogs' extension. Annali di Matematica Pura ed Applicata (1923 -).201, 487-498, (2022). https://doi.org/10.1007/s10231-021-01125-2

\bibitem{Voisin} Voisin, C.: Hodge theory and complex algebraic geometry II. Cambridge Studies in Advanced Mathematics, Cambridge University Press, 77 (2009).
\end{thebibliography}
\end{document}